\newcommand\reallywidehat[1]{%
\savestack{\tmpbox}{\stretchto{%
  \scaleto{%
    \scalerel*[\widthof{\ensuremath{#1}}]{\kern-.6pt\bigwedge\kern-.6pt}%
    {\rule[-\textheight/2]{1ex}{\textheight}}
  }{\textheight}%
}{0.5ex}}%
\stackon[1pt]{#1}{\tmpbox}%
}
\newtheorem{Theorem}{Theorem}[section]
\newtheorem{Lemma}[Theorem]{Lemma}
\newtheorem{Corollary}[Theorem]{Corollary}
\newtheorem{Definition}[Theorem]{Definition}
\newtheorem{Remark}[Theorem]{Remark}
\newcommand{\hheps}{h^\varepsilon}
\newcommand{\eps}{\TS}
\newcommand{\rh}{r_\eps}
\newcommand{\rht}{r_{\heps,\eps}}
\newcommand{\gh}{g_\eps}
\newcommand{\bfh}{\bff_\eps}
\newcommand{\hrh}{\widehat{r}_\eps}
\newcommand{\vh}{v_\eps}
\newcommand{\vrh}{\vr_\eps}
\newcommand{\vuh}{\vu_\eps}
\newcommand{\vvh}{\vv_\eps}
\newcommand{\vwh}{\bfw_\eps}
\newcommand{\etah}{\eta_\eps}
\newcommand{\zh}{z_\eps}
\newcommand{\hv}{\widehat{v}}
\newcommand{\hvr}{\widehat{\vr}}
\newcommand{\hvu}{\widehat{\vu}}
\newcommand{\hvv}{\widehat{\vv}}
\newcommand{\heta}{\widehat{\eta}}
\newcommand{\hvw}{\widehat{\vw}}
\newcommand{\hbf}{\widehat{\bff}}
\newcommand{\hvh}{\hv_{\TS}}
\newcommand{\hvuh}{\hvu_{\TS}}
\newcommand{\hvwh}{\hvw_{\TS}}
\newcommand{\hbfh}{\hbf_{\TS}}
\newcommand{\hvht}{\hv_{h,\TS}}
\newcommand{\hvrht}{\hvr_{h,\TS}}
\newcommand{\hvuht}{\hvu_{h,\TS}}
\newcommand{\hvvht}{\hvv_{h,\TS}}
\newcommand{\hvwht}{\hvw_{h,\TS}}
\newcommand{\bfPsih}{\bfPsi_\eps}
\newcommand{\phih}{\phi_\eps}
\newcommand{\Phih}{\Phi_\eps}
\newcommand{\varphih}{\varphi_\eps}
\newcommand{\Psie}{\Psi_{\epsilon}}
\newcommand{\Psieh}{\Psi_{\epsilon,\heps}}
\newcommand{\psih}{\psi_\eps}
\newcommand{\heps}{{h}}
\newcommand{\pht}{p_{\heps,\eps}}
\newcommand{\ght}{g_{\heps,\eps}}
\newcommand{\bfht}{\bff_{\heps,\eps}}
\newcommand{\vht}{v_{\heps,\eps}}
\newcommand{\vrht}{\vr_{\heps,\eps}}
\newcommand{\vuht}{\vu_{\heps,\eps}}
\newcommand{\vvht}{\vv_{\heps,\eps}}
\newcommand{\vwht}{\bfw_{\heps,\eps}}
\newcommand{\etaht}{\eta_{\heps,\eps}}
\newcommand{\zht}{z_{\heps,\eps}}
\newcommand{\Uht}{U_{\heps,\eps}}
\newcommand{\Oht}{\Omega_{\heps,\eps}}
\newcommand{\bfw}{\mathbf{w}}
\newcommand{\bfu}{\mathbf{u}}
\newcommand{\bfq}{\mathbf{q}}
\newcommand{\bfe}{\mathbf{e}}
\newcommand{\bfy}{\mathbf{y}}
\newcommand{\mJ}{\mathcal{F}}
\newcommand{\Jacob}{\mathbb{J}}
\newcommand{\mJkm}{\mJ_k^{k-1}}
\newcommand{\mJik}{\mJ_0^{k}}
\newcommand{\mJikm}{\mJ_0^{k-1}}
\newcommand{\bfx}{ x }
\newcommand{\bfX}{ X}
\newcommand{\bfr}{r}
\newcommand{\bfn}{\mathbf{n}}
\newcommand{\bff}{\mathbf{f}}
\newcommand{\xref}{\widehat{x}}
\newcommand{\bfxref}{\widehat{\bfx}}
\newcommand{\bfXkm}{\bfX_k^{k-1}}
\newcommand{\bfF}{\mathbf{F}}
\newcommand{\bfD}{\mathbf{D}}
\newcommand{\bfS}{\mathbf{S}}
\newcommand{\bfI}{\mathbf{I}}
\newcommand{\bfPhi}{\boldsymbol{\Phi}}
\newcommand{\bfPsi}{\boldsymbol{\Psi}}
\newcommand{\bftau}{\boldsymbol{\tau}}
\newcommand{\hvarphi}{\widehat{\varphi }}
\newcommand{\hbfPsi}{\widehat{\bfPsi }}
\newcommand{\hpsi}{\widehat{\psi }}
\newcommand{\bfPhiht}{{\bfPhi }_{h,\eps}}
\newcommand{\varphiht}{{\varphi }_{h,\eps}}
\newcommand{\bfPsiht}{{\bfPsi }_{h,\eps}}
\newcommand{\psiht}{{\psi }_{h,\eps}}
\newcommand{\hvarphiht}{\widehat{\varphi }_{h,\eps}}
\newcommand{\hbfPsiht}{\widehat{\bfPsi }_{h,\eps}}
\newcommand{\hpsiht}{\widehat{\psi }_{h,\eps}}
\newcommand{\Div}{{\rm div}}
\newcommand{\Grad}{\nabla}
\newcommand{\Lap}{ \Delta}
\newcommand{\Divh}{\Div}
\newcommand{\Gradh}{\Grad}
\newcommand{\Divup}{\Div^{\rm up}_\eps}
\newcommand{\Du}{\bfD(\bfu)}
\newcommand{\avg}[1]{ \left< #1 \right>_\sigma }
\newcommand{\avc}{\Pim}
\newcommand{\jump}[1]{\left[\! \left[#1\right] \! \right]}
\newcommand{\Abs}[1]{ \left| #1 \right|}
\newcommand{\abs}[1]{ | #1 |}
\newcommand{\norm}[1]{\left\lVert#1\right\rVert}
\newcommand{\co}[2]{{\rm co}\{ #1 , #2 \}}
\newcommand{\Ov}[1]{\overline{#1}}
\newcommand{\Ovg}[1]{\overline{#1}_\sigma}
\newcommand{\seb}[1]{{#1}}
\newcommand{\Pim}[1]{\Pi_\grid [#1]}
\newcommand{\Pie}[1]{\Pi_\edges [#1]}
\newcommand{\Pit}[1]{\Pi_t [#1]}
\newcommand{\Pip}[1]{\Pi_p [#1]}
\newcommand{\aleq}{\stackrel{<}{\sim}}
\newcommand{\vr}{\varrho}
\newcommand{\vu}{\bfu}
\newcommand{\vw}{\bfw}
\newcommand{\vc}[1]{{\bf #1}}
\newcommand{\vv}{\vc{v}}
\newcommand{\Q}{\mathbb{Q}}
\newcommand{\R}{\mathbb{R}}
\definecolor{Cgrey}{rgb}{0.85,0.85,0.85}
\definecolor{Cblue}{rgb}{0.50,0.85,0.85}
\definecolor{Cred}{rgb}{1,0,0}
\definecolor{fancy}{rgb}{0.10,0.85,0.10}
\definecolor{forestgreen}{rgb}{0.13, 0.55, 0.13}
\newcommand{\cred}{\color{Cred}}
\newcommand{\ds}{{\,d\bfr}}
\newcommand{\dSx}{{\rm d}S(x)}
\newcommand{\dSxref}{{\rm d}S(\widehat{x})}
\newcommand{\TS}{\tau}
\newcommand{\dx}{\,{\rm d} {x}}
\newcommand{\dxref}{\,{\rm d} \widehat{x}}
\newcommand{\dt}{\,{\rm d} t }
\newcommand{\dxdt}{\dx \ \dt}
\newcommand{\pd}{\partial}
\newcommand{\pdt}{\pd _t}
\newcommand{\Hc}{\mathcal{H}}
\newcommand{\faces}{\mathcal{E}}
\newcommand{\facesK}{\faces(K)}
\newcommand{\facesint}{\edgesint}
\newcommand{\edges}{\faces}
\newcommand{\edgesint}{\edges_{\rm I}}
\newcommand{\edgesext}{\edges_{\rm E}}
\newcommand{\edgesextS}{\edges_{\rm S}}
\newcommand{\Sh}{\Sigma_h}
\newcommand{\Th}{\mathcal{T}_h}
\newcommand{\Thk}{\mathcal{T}_h^{k}}
\newcommand{\Tht}{\mathcal{T}_{h,\eps}}
\newcommand{\Xspace}{Q_h}
\newcommand{\Yspace}{V_h}
\newcommand{\Wspace}{W_h}
\newcommand{\hXspace}{\widehat{Q}_h}
\newcommand{\hYspace}{\widehat{V}_h}
\newcommand{\hWspace}{\widehat{W}_h}
\newcommand{\Thz}{\mathcal{T}_h^0}
\newcommand{\Qh}{Q}
\newcommand{\Vh}{V}
\newcommand{\Wh}{W}
\newcommand{\ALE}{\mathcal{A}}
\newcommand{\ALEh}{\ALE_{\eps}}
\newcommand{\ALEhk}{\ALEh^{k}}
\newcommand{\ALEhkm}{\ALEh^{k-1}}
\newcommand{\ALEht}{\ALE_{h,\eps}}
\newcommand{\ALEhtk}{\ALE_{h,\eps}^{k}}
\newcommand{\ALEhtkm}{\ALE_{h,\eps}^{k-1}}
\newcommand{\DT}{\mathcal{ J}}          
\newcommand{\order}{\mathcal{O}}
\newcommand{\grid}{\mathcal{T}}
\newcommand{\Ohk}{\Oh^k}
\newcommand{\Oh}{\Omega_\eps}
\newcommand{\Oref}{\widehat{\Omega}}
\newcommand{\Ohref}{\widehat{\Omega}_\eps}
\newcommand{\ints}[1] {\int_{\sigma} #1 \dSx }
\newcommand{\intsh}[1] {\int_{\sigma} #1 \dSx }
\newcommand{\intS}[1] {\int_{\Sigma} #1 \ds }
\newcommand{\intSh}[1] {\int_{\Sh} #1 \ds }
\newcommand{\intO}[1]{\int_{\Oh} #1 \dx}
\newcommand{\intOt}[2]{\int_{\Oh(#1)} #2 \dx}
\newcommand{\intOtau}[2]{\int_{\Oh(#1)} #2 \dx}
\newcommand{\intshref}[1] {\int_{\sigma} #1 \dSxref }
\newcommand{\intOref}[1]{\int_{\Oref} #1 \dxref}
\newcommand{\intOk}[1]{\int_{\Oh^k} #1 \dx}
\newcommand{\intOkm}[1]{\int_{\Oh^{k-1}} #1 \dx}
\newcommand{\intOhk}{\intOk} 
\newcommand{\intOhkm}{\intOkm}
\newcommand{\intEhref}{\sum_{\sigma \in \edgesint}\intshref}
\newcommand{\intEh}{\sum_{\sigma \in \edgesint}\intsh}
\newcommand{\intEhk}{\sum_{\sigma \in \edgesint^{k}}\intsh}
\newcommand{\intEhkKL}{\sum_{\sigma=K|L \in \edgesint^{k}}\intsh}
\newcommand{\intK}[1]{\int_{K} #1 \ \dx}
\newcommand{\sumK}{\sum_{K \in \Th}}
\newcommand{\sumintK}{\sum_{K \in \Thk}\intK}
\newcommand{\sumintKs}{\sum_{K \in \Thk}\sum_{\sigma \in \pd K}\intsh}
\newcommand{\intTO}[1]{\int_0^T \int_{\Oh} #1 \ \dxdt}
\newcommand{\sumEKh}[1]{ \sum_{K \in \Tht} \sum_{\sigma \in \facesK }\int_{\sigma} #1 \dSx }
\def\Xint#1{\mathchoice 
{\XXint\displaystyle\textstyle{#1}}%
{\XXint\textstyle\scriptstyle{#1}}%
{\XXint\scriptstyle\scriptscriptstyle{#1}}%
{\XXint\scriptscriptstyle\scriptscriptstyle{#1}}%
\!\int} 
\def\XXint#1#2#3{{\setbox0=\hbox{$#1{#2#3}{\int}$} 
\vcenter{\hbox{$#2#3$}}\kern-.5\wd0}} 
\def\dashint{\,\Xint-}
\begin{document}


\title{On numerical approximations to fluid-structure interactions involving compressible fluids}
\thanks{Both authors thank the support of the primus research programme of Charles University, PRIMUS/19/SCI/01 and the program GJ17-01694Y of the Czech national grant agency (GA\v{C}R). 
 S.~S also thanks the University Centre UNCE/SCI/023 from Charles University. The institute of Mathematics of the Czech Academy of Sciences is supported by RVO: 67985840. 
}

\author{Sebastian Schwarzacher \and Bangwei She
}
\address{Department of Analysis, Faculty of Mathematics and Physics, Charles University (schwarz, she)@karlin.mff.cuni.cz.}


\maketitle

%
%
%
%
%
%

\begin{abstract}
In this paper we introduce a numerical scheme for fluid-structure interaction problems in two or three space dimensions: A flexible elastic plate is interacting with a viscous, compressible barotropic fluid. Hence the physical domain of definition (the domain of Eulerian coordinates) is changing in time. We introduce a fully discrete scheme that is stable, satisfies geometric conservation, mass conservation and the positivity of the density. We also prove that the scheme is consistent with the definition of continuous weak solutions.

\noindent\textsc{Keywords:}
fluid-structure interaction, compressible Navier--Stokes, stability, consistency, finite elements,  elastic plates, incremental time-stepping


\noindent
\textsc{MSC (2010):
35Q30, 76N99, 74F10, 65M12, 65M60
}
\end{abstract}

\section{Introduction}\label{sec:1}
In the recent decades, there is an increasing attendance of mathematicians on the subject of fluid--structure interaction (FSI) problems due to their numerous applications. This includes blood flow through a vessel, oil flows through an elastic pipe but also oscillations of suspension bridges, lifting of airplanes, bouncing of elastic balls or the rotation of wind turbines, see~\cite{BelGalKye13, FSIforBIO,  Canic1, kamakoti2004fluid} and the references therein. 

We will consider the particular setting where the solid (or the structure) is a shell or a plate. This means that it is modeled as a thin object of one dimension less than the fluid. For related up-to-date {\em modeling and model reductions} on {\em plates} and {\em shells} see \cite{CiaII,CSS2,K1} and the references therein. The fluid will be considered to be governed by the compressible Navier--Stokes equation. We are interested in the development of {\em Galerkin schemes} which are connected to the setting of {\em weak solutions}. 
Most of the mathematical effort in this setting so far was devoted to incompressible fluids for {\em weak solutions} with a {\em fixed prescribed scalar direction of displacement of the shell}. Well posedness results commonly show that a weak solution exists {\em until a self-touching} of the solid is approached. For {\em incompressible Newtonian fluids} we name the following results~\cite{Bou07,Boris1,DE,DEGLT,CG,GraHil16,  Daniel, BorSun13, SunBorMulti,MuhSch19}. On the other hand, the theory for {\em compressible flows} is much less developed.  Only recently the existence of weak solutions in the above setting was shown~\cite{BS}, see also \cite{TW}.

The numerical results of fluid-structure interactions are rich and diverse. The numerical analysis for the incompressible flows is developed in accordance with the existence theory, see the kinematical splitting schemes developed in \cite{Boris1,Boris2,Boris3,Lukacova}, see also \cite{Canic1,FRW, Hundertmark, Ric17} for more simulation results. 
Without a surprise, the numerical theory for compressible fluids interacting with shells or plates is quite sparse. We mention~\cite{BF,DFKV} for the stability analysis with a given variable geometry and~\cite{FKP, Kosik} for some numerical simulations. 
It seems that a numerical strategy for compressible flows interacting with elastic structures  stayed undeveloped due to the high nonlinearity of the problem originating from the fluid and its sensitive coupling to its geometry. 

This paper aims to fill that gap and enrich the theory on fluid-structure interactions by introducing a (fully discrete) numerical approximation scheme which is in coherence with the known continuous existence theory. 
In particular we study numerics for the interaction between a compressible barotropic fluid flow with an elastic shell in the time-space domain $I \times \Omega(t)$, where 
$\Omega=\Omega(t)\subset R^{d}$  ($d\in\{2,3\}$, $t\in I=[0,T])$ is a time dependent domain defined by its unsteady boundary. 
The boundary of $\Omega$ consists of a time dependent elastic shell $\Gamma_S(t)$ on the top surface of the fluid (whose projection in $d^{th}$-direction is $\Sigma$ given below), and fixed solid walls $\Gamma_D=\pd\Omega \backslash\Gamma_S $ for the other parts of the boundary. Throughout the paper we reserve $\bfr=(x_1,\ldots, x_{d-1})$ as the coordinates for the plate displacement $\eta : \Sigma \to \mathbb{R}$, i.e. the distance of the shell above the horizontal plane $x_d=H$. We define $\bfx= (\bfr,x_d) $ as the Eulerian coordinates in the domain 
\[
\Omega(t):=\{(\bfr,x_d)\in \Sigma \times \mathbb{R}\,:\, 0<x_d<H+\eta(\bfr)\}.
\]
We denote by $\Oref = \Omega^0=\Sigma \times[0,H]$ the reference domain, with $\Sigma= [0,L_1] \times \cdots \times[0,L_{d-1}]$ being a $(d-1)$-dimensional time-independent domain. Accordingly we introduce the following the one-to-one invertible mapping  
\begin{equation}\label{ALE_mapping}
\ALE:\Oref(\bfxref) \rightarrow \Omega(\bfx), \quad 
\bfx=\ALE(t,\bfxref)=\ALE(t, \widehat{\bfr},\widehat{x}_d) =  \left(\widehat{\bfr}, \widehat{x}_d\frac{H+\eta(t,\bfr)}{H}\right).
\end{equation}
Here and hereafter, we distinguish the functions on the reference domain by the superscript `` {$\widehat{ } $} '' except the ALE mapping.  
We denote by $\widehat{\nabla}\equiv \nabla_{\bfxref}$ and $\widehat{\Div}\equiv \Div_{\bfxref}$. Furhter, we denote  $\Jacob$ and $\mJ$ as the Jacobian of the mapping $\ALE$ and its determinant: 
\[ \Jacob = \widehat{ \Grad} \ALE, \quad \mJ =\det(\Jacob). 
\]
We present Figure \ref{fig_ALE} for a two dimensional example of the domain and ALE mapping. 
\begin{figure}[!h]\centering
\vspace{-1.5cm}
\begin{tikzpicture}[scale=1.0]
\draw[->] (0,0)--(6.4,0);
\draw[->] (0,0)--(0,2.4);
\draw[very thick](0,0)--(6,0)--(6,2)--(0,2)--(0,0);
\draw[very thick, blue](0,2)--(0,0)--(6,0)--(6,2);
\draw[very thick, red](0,2)--(6,2);
\path node at (3,1) {$\Omega(0) =\Oref$};
\path node at (-0.3,1) {\textcolor{blue}{$\Gamma_D$}};
\path node at (3,2.3) {\textcolor{red}{$\Gamma_S(0)=\Sigma$}};
\path node at (-0.2,-0.2) {$0$};
\path node at (6,-0.3) {$L$};
\path node at (-0.3,2) {$H$};
\draw[very thick,->] (6.3,1)--(7.7,1);
\path node at (7,1.3) {$\ALE$};
\draw[thick,red]   (8, 2) .. controls (9.5, 0.5) and (12.5, 4) .. (14, 2);
\draw[thick,dashed](8,0)--(14,0)--(14,2)--(8,2)--(8,0);
\draw[very thick, blue](8,2)--(8,0)--(14,0)--(14,2);
\path node at (11,1) {$\Omega(t)$};
\path node at (10,2.2) {\textcolor{red}{$\Gamma_S(t)$}};
\draw[<->,cyan ] (12.5,2)--(12.5,2.6);
\path node at (12.7,2.3) {\textcolor{cyan}{$\eta$}};
\end{tikzpicture}
\vspace{-0.5cm}
\caption{Time dependent domain and the ALE mapping}\label{fig_ALE}
\end{figure}

The evolution of the  fluid flow is modeled by the Navier--Stokes system
\begin{subequations}\label{pde}
 \begin{equation}\label{pde_D}
  \partial_t \vr+\Div(\vr\bfu ) =0,\text{ in }  I\times \Omega,
 \end{equation}
  \begin{equation} \partial_t(\vr\bfu )+\Div(\vr\bfu \otimes\bfu ) =\Div \bftau 
  +\vr \bff, \text{ in } I\times \Omega.\label{pde_M}
  \end{equation}
where $\vr = \vr(t,\bfx)$ is the fluid density and $\bfu = \bfu(t, \bfx)$ is the velocity field, $\bftau$ is the Cauchy stress 
\[\bftau=\bfS(\Grad \vu) -p(\vr)\bfI, \quad  \bfS(\Grad \vu) =2 \mu \Du + \lambda\Div\bfu\; \bfI,\quad \Du=\frac{\Grad \bfu + \Grad^T \bfu}{2},\quad p(\vr)=a \vr^{\gamma}
\]
for $a>0$, $\gamma >1$. 
The viscosity coefficients satisfy $\mu>0$ and $ \mu+\lambda \geq 0$. 
The motion of the shell is given by 
\begin{align}\label{pde_S}
\varepsilon_0\vr_S\partial_t z + K'(\eta) &=
  g+\bfe_d \cdot \bfF,\quad z =\pdt \eta, \quad\text{on}\quad I\times \Sigma, 
\end{align}
where $ z$ is the velocity of the shell, $\vr_S>0$ is the density of the shell,  $\bfe_d=(0,0,1)^T$ for $d=3$ ($\bfe_d=(0,1)^T$ for $d=2$), 
$g=g(t,\bfr)$ is a given function, 
$ \bfF= - \big(\bftau \cdot \bfn \big)\circ \ALE \ \DT$, $\bfn$ is outer normal and $\bftau =\bfS-p \bfI$ is the Cauchy stress. 
 For the sake of simplicity, we assume throughout the paper that  
$ \varepsilon_0\vr_S=1$.  
As elastic energy $ K(\eta)$ we use the following linearized  energy
$ K(\eta)= \frac{\alpha|\nabla^2 \eta|^2}{2}+\frac{\beta|\nabla \eta|^2}{2}, \; \alpha >0,\; \beta\geq 0$, 
which leads to the following $L^2$-Gradient:
\[K'(\eta)=\alpha \Delta^2 \eta - \beta\Delta \eta. \]

We refer to Ciarlet and Roquefort~\cite{CiaII} and references therein for the details of the model and also other choices of $K(\eta)$. 
To close the system we propose the following boundary conditions and initial data
\begin{equation}\label{initials}
 \bfu|_{\Gamma_D} =0, \;   \eta|_{\pd \Sigma}=0,\; \nabla \eta|_{\pd \Sigma}=0, \quad 
  \vr(0)=\vr_0,\; (\vr \vu)(0) =\bfq_0 \text{ in } \Omega(0), \quad 
\eta(0,\cdot)=\eta_0,\;  z (0,\cdot)=z_0 \; \text{in}\; \Sigma,
\end{equation}
where $\eta_0(x_1)= 0$ and $z_0(x_1)$ is a given function.  
We also need a compatibility condition between the shell and the fluid
\begin{equation*}
 \bfu(t,\bfx)|_{\Gamma_S}\circ \ALE =   z (t,\bfr) \bfe_d. 
\end{equation*}  
\end{subequations}


The purpose of the present paper is to introduce a fully discrete numerical scheme that is equipped with suitable physical and mathematical properties. By that we mean that it satisfies in particular:
\begin{itemize}
\item[(a)] A fully discretized weak continuity equation that can be renormalized in the sense of DiPerna and Lions, such that the error for convex renormalizations is positive.
\item[(b)] Mass conservation and positivity of the discrete density is preserved.
\item[(c)] A fully coupled momentum equation in the spirit of Definition~\ref{def_ws} on the discrete level.
\item[(d)] A discrete energy inequality for the coupled system (analogous to the continuous energy inequality~\eqref{energy_estimates_continuous}). 
\item[(e)] The scheme is consistent with the continuous weak solutions introduced in~\cite{BS} (See also Definition~\ref{def_ws}). This means in particular, that if the discrete deformation, density and velocity converge (strongly) to some limit triple, this limit triple is indeed a weak solution of the continuous problem. 
\item[(f)] The scheme exists for a minimal time-interval. I.e.\ for every $\delta_0\in (0,H/2)$ there is a minimal time $T_0$, such that a-priori $\inf_{[0,T_0]\time \Sigma}\eta(t,\bfr)\geq \delta_0-H$.
\end{itemize}
The existence of weak solutions for compressible viscous barotropic fluids interacting with an elastic plate is only recent~\cite{BS}. It follows the seminal existence proof for weak solutions of the compressible Navier--Stokes equations~\cite{Li2,feireisl1}. Note that the existence approach introduced in~\cite{BS} can not be adapted to numerical approximations in a straight forward manner since it uses fixed point theorems and regularization operators on the continuous level. Indeed, the introduction of a numerical scheme that satisfies all conditions above turns out to be rather sophisticated. In particular, in order to capture the material time-derivative at the interacting interface, we have to introduce a corrector flow field (the function $\vw$, below) that depends (linearly) on the elastic deformation $\eta$ which allows to approximate the material derivative (the Eulerian time derivative) of the elastic solid. 

We will consider a fully coupled implicit Euler scheme with respect to the time derivative. The spatial discretization of the deformation is done by piecewise polynomials. 
All three quantities are prescribed w.r.t\ a fixed steady reference mesh. This provides a nonlinear system for which we can prove the existence of solutions every time--step via a homotopy argument (See Theorem~\ref{thm:exist}). 

The {\em critical highlight} of the present paper is given in Theorem~\ref{thm_s1} and Theorem~\ref{thm_s2} where it is shown that the introduced scheme satisfies a discrete version of the energy inequality.
It turns out that for compressible fluids only a fully non-linear implicit scheme does satisfy an energy inequality (See Remark~\ref{rem:implicit}). This is in contrast to incompressible fluids, which can be linearized~(see e.g.\ \cite{Boris1}). While the strategy to get energy stable schemes for the compressible barotropic Navier--Stokes system is quite standard if the fluid domain $\Omega$ is fixed, see e.g. \cite{GallouetMAC, HS_MAC, Karper},  it becomes rather difficult when a time dependent domain is considered. We would like to mention here the stability results of \cite{BF,DFKV} where the moving domain is a given function.  
As far as we know, this is the first result on energy stable and mass conservative numerical solutions for the FSI problem with compressible fluids even in two space dimensions. The {\em technical highlight} is the consistency of solutions, see Theorem~\ref{Thm_C1} and Theorem~\ref{Thm_C2}. This is due to the fact that in case of fluid-structure interaction the space of test function is {\em a part of the weak solution} (see Definition~\ref{def_ws}). For that one has to ensure that the space of test functions of the limit weak solution (that depends on the limit geometry) can indeed be approximated. For that reason, the consistency of solutions is sensitive to the regularity of solutions--hence the consistency is the only part of the paper where there are restrictions on the barotropic exponent $\gamma$. In the fully discrete case the restriction is that $\gamma>\frac{6}5$ in the semi-discrete setup there is no restriction on $\gamma$.

The {\bf plan of the paper} is the following. In Section 2 we introduce the necessary analysis for the incremental time-stepping approximation, Section 3 is dedicated to the semi-discrete scheme, Section 4 to the fully discrete scheme.

The {\bf main result} of the present paper is the existence of numerical solutions which satisfy (a)--(e) stated above. We first introduce the troubles related to the variable Eulerian geometry by studying the case of a discrete in time, but continuous in space model for which we prove properties (a)--(e). In the second part of the paper we study the fully discrete case for which (a)--(e). 
For the better readability we state here where the respective results are shown:
\begin{itemize}
\item[(a)] See Lemma~\ref{lem_r0} (semi-discrete) and Lemma~\ref{lem_r1} (fully discrete) for the renormalized equation. 
\item[(b)] See \eqref{MC} and Lemma~\ref{lem_non-negative} for conservation of mass and non-negativity of the density for the semi-discrete case; 
see \eqref{MCFS} and Lemma~\ref{lem_positive} for conservation of mass  and positivity of the density for the fully discrete case. 
\item[(c)] See Definition~\ref{def_SS} (semi-discrete) and Definition~\ref{def_FS} (fully discrete) for the fully coupled momentum problem.
\item[(d)] See Theorem~\ref{thm_s1} (semi-discrete) and Theorem~\ref{thm_s2} (fully discrete) for the energy inequality.
\item[(e)] See Theorem~\ref{Thm_C1} (semi-discrete) and Theorem~\ref{Thm_C2} (fully discrete) for the consistency of the schemes.
\item[(f)] See Theorem~\ref{thm:exist} for the existence of a numerical solution to the fully discrete scheme. See Lemma~\ref{lem:Tsd} (semi-discrete) and Corollary~\ref{cor:maxtimeh} (fully discrete) for the minimal time interval of existence.
\end{itemize}

We wish to point out that the scheme is built in such a way that one may prove that any subsequence of a numerical approximation converges weakly to a continuous solution.\footnote{Please observe, that the lower bound on $\gamma$ is the very same as was requested in~\cite{BS}.} The convergence result for the very same scheme will be the content of an independent paper.


\section{Preliminaries}\label{sec2}
In this section, we introduce the necessary notations, the time discretization and time difference operators. 
\subsection*{Weak solutions}
We begin by introducing the following concept of weak solutions developed in~\cite{BS, TW} where  the existence of weak solutions (until a self-contact of the boundary) under appropriate initial conditions was shown.  Indeed, existence could be shown in the following {\em continuous} spaces
\begin{itemize}
    \item The {\em deformation} is usually assumed to be in the following Bochener space\footnote{Throughout the paper we make use of the standard notation of Bochner spaces, Sobolev spaces and Lebesgue spaces, see for instance~\cite{FeireislNovotny09} for more details.} $\eta \in W^I:=L^2(0,T;W^{2,2}_0(\Sigma)) \cap W^{1,2}(0,T;L^2(\Sigma))$.
    \item The {\em density} $\vr\in Q^I$, were $Q^I:=L^\infty(0,T;L^\gamma(\Omega(t))$. This means that $\vr(t)\in L^\gamma(\Omega(t))$ for almost every $t$ and that the essential supremum over the respective norms is bounded.
    \item The {\em velocity} $\vu\in V^I:=\{\vu\in L^2(0,T;W^{1,2}(\Omega(t))\, :\, \vu(\bfr,H+\eta(\bfr))=\partial_t \eta(\bfr)\bfe_d\text{ for all }\bfr\in \Sigma \text{ and } \vu\equiv 0 \text{ on }\Gamma_D\}
$
    \end{itemize}
\begin{Definition}[Weak solution]\label{def_ws}  
A weak solution to \eqref{pde}--\eqref{initials} is a triple 
$(\eta,\vr, \vu)\in W^I\times Q^I\times V^I$
that satisfies the following for all $\varphi \in C^\infty \left( \bar{I}\times \R^d \right)$ and for all $(\bfPsi, \psi) \in C^\infty_0 ([0,T] \times \R^d) \times C^\infty_0 (\Sigma)$ that 
\[ \int_0^T \frac{d}{dt} \intO{\vr \varphi }\dt - \int_0^T \intO{ \left( \vr \pdt \varphi +\vr \vu \cdot \Grad \varphi \right) } \dt  =0
\]
\begin{align*}
& \int_0^T \frac{d}{dt} \intO{\vr \vu \cdot \bfPsi }\dt - \int_0^T \intO{ \left( \vr \vu \cdot \pdt \bfPsi +\vr \vu \otimes \vu :  \Grad \bfPsi  \right) } \dt 
+ \int_0^T \intO{ \left( \bfS(\Grad \vu) : \Grad \bfPsi  -a \vr^\gamma \Div \bfPsi \right) }\dt 
\\ &
+ \int_0^T \left( \frac{d}{dt} \int_\Sigma \pdt \eta \psi \ds   - \int_\Sigma \Big( \pdt \eta \pdt \psi  +  K'(\eta) \psi \Big) \ds  \right) \dt
 = \int_0^T \intO{ \vr \bff \cdot \bfPsi }\dt + \int_0^T \int_\Sigma g \psi \ds \dt
\end{align*}
 with $ \bfPsi(\bfr,H+\eta)=\psi(\bfr )\bfe_d$ on $\Sigma$  and  $\bfPsi\equiv 0$ on $\Gamma_D$.  
Moreover, the solution satisfies the energy estimates
\begin{equation}\label{energy_estimates_continuous}
\begin{split}
& \sup_{t\in [0,T]}\bigg( \intO{ \left( \frac12 \vr |\vu|^2 + \Hc(\vr)   \right) }  +  
\intS {\big( z^2  + K(\eta) \big) } \bigg)
+
\int_0^T \intO{ \bfS(\Grad \vu) :  \Grad \vu }\dt 
\\& 
\leq
\intO{ \left( \frac12 \vr_0 |\vu_0|^2 + \Hc(\vr_0)   \right)  } +\intS{ \left(|\eta_0|^2 + |z_0|^2 +K(\eta(0))\right)} + \int_0^T \left( \norm{\bff}_{L^2(\Omega)} + \norm{g}_{L^2(\Sigma)} \right) \dt 
 \end{split}
\end{equation}
where $\Hc(\vr) = \frac{a }{\gamma-1}\vr^\gamma$ represents the internal energy of the fluid. 
\end{Definition}

\subsection*{Time discretization} 
We divide the time interval by $N_T$ subintervals and set  $\TS = T/ N_T$ as the size of the time step. For simplicity, we write $t^k=k\TS$ and $I^k=[t^k,t^{k+1})$ for all $k=0,1,\ldots,N_T$. 
Further, we denote $\vh^k$ as the approximation of $v$ at the time $t^k$.  Next, we start the time discretization with the piecewise constant in time approximations of the domain (mesh)
\[
\Oh(t) =\Oref \text{ for } t < \TS; \quad \Oh(t) = \Oh^k \mbox{ for all } t\in I^k, \; k=1,2,\ldots,N_T. 
\]
Note that the deformation of the domain is related to the ALE mapping, that is also approximated as a piecewise constant in time function
\[
\ALEh(t) =\ALEh^0 \text{ for } t < \TS; \quad \ALEh (t) = \ALEh^k \mbox{ for all } t\in I^k,\; k=1,2,\ldots,N_T. 
\] 
Further, we continue the time discretization  of all unknowns,  including the test functions,  by piecewise constant in time functions on the fixed reference domain $\Oref$
\begin{equation}\label{Tspace}
\hvh (t,\bfxref) =\hvh^0(\bfxref) \text{ for } t < \TS; \quad \hvh (t,\cdot) = \hvh^k(\bfxref)  \mbox{ for all } t\in[k\TS, (k+1)\TS),\quad k=1,2,\ldots,N_T, \quad \bfxref \in \Oref
\end{equation}
where $\hv\in\{ \hvr, \hvu, \widehat{p}, \heta, \widehat{z} ,\hvw, \hvarphi, \hbfPsi, \hpsi\}$. 
To recover the functions from the reference domain back to  current domain, we take 
\begin{equation}\label{two_domain}
\vh^k = \hvh^k \circ (\ALEhk)^{-1} \quad \mbox{and} \quad 
\vh = \hvh \circ \ALEh^{-1}
\end{equation}
for $v\in\{ \vr, \vu, p, \eta, z , \vw, \varphi, \bfPsi, \psi\}$. 
Finally, we define a projection operator 
\begin{equation}\label{Pit}
\Pit v = \sum_{k=0}^{N_T}   \int_{I^k} 
\frac{1_{I^k}(t)}{\TS} v \dt, \; \forall \; k \in \{0,1,\ldots, N_T\}, \mbox{ and }
1_{I^k}(t)=\begin{cases} 1 & \mbox{if } t \in I^k\\
0 & \mbox{otherwise }.
\end{cases}
\end{equation}

\subsection*{ALE mapping}
In consistent with \eqref{two_domain}, we define the deformation rate of the fluid domain at time $t^k$ $(k=1,2,\ldots, N_T)$ as  
\begin{equation}\label{mesh_velocity}
\hvwh^{k}= \frac{\ALEhk -\ALEhkm }{\TS} 
= \left( {\bf0}_{d-1}, \frac{\etah^{k} -\etah^{k-1}}{\TS }\frac{\xref_d}{H } \right)^T, \quad 
\vwh^k = \hvwh^k \circ (\ALEh^k)^{-1} = \left( {\bf0}_{d-1}, \frac{\etah^{k} -\etah^{k-1}}{\TS }\frac{x_d}{\etah^{k} +H } \right)^T,
\end{equation}
where ${\bf0}_{d-1}$ is $(d-1)$-dimensional zero vector.  
For convenience, we introduce $\bfX_i^j$ as the mapping from $\Oh(t^i)$ to $\Oh(t^j)$, i.e., 
\begin{equation}\label{mappingij}
\bfX_i^j: \quad \Oh(t^i) \mapsto \Oh(t^j), \qquad  \bfX_i^j(\bfx^i) = \ALEh^j \circ (\ALEh^i)^{-1}(\bfx^i) \quad \mbox{for all } \bfx^i \in \Oh(t^i). 
\end{equation}
Recalling the definition of the ALE mapping \eqref{ALE_mapping}, the Jacobian of $\bfX_i^j$ and its determinant read    
\begin{equation} \label{jacob}
\Jacob_i^j = \frac{\pd \bfX_i^j(\bfx^i)}{\pd \bfx^i}, \quad \mbox{and} \quad 
\mJ_i^j =\det \left(  \Jacob_i^j \right) = \frac{\eta^{j} +H }{\eta^i +H} , 
\end{equation}
respectively. 
From the above notations it is easy to check 
\begin{equation}\label{div_jacob}
\TS \;\Div \vwh^{k} 
=   1- \mJkm. 
\end{equation}
Further we observe, that if $\etah^k(\bfr)\in (\delta_0-H, H_{\max} -H )$ for all $k\in\{1,...,N_T\}$ and all $\bfr\in \Sigma$, then 
\begin{align*}
 0<   \frac{\delta_0}{H_{\max}}\leq \mJ_i^j \leq  \frac{H_{\max}}{\delta_0}, \quad i,j\in \{1,...,N_T\}. 
\end{align*}
In order to transfer between the current domain and the reference domain, 
we recall the chain-rule and properties of the Piola transformation from~\cite{Ciarlet_elas}
\begin{equation}\label{Piola}
\dx = \mJ \dxref, \quad 
\dSx =  \abs{\mJ \Jacob^{-T} \widehat{\bfn} } \dSxref, \quad 
\bfn = \frac{\mJ \Jacob^{-T} \widehat{\bfn} }{ \abs{\mJ \Jacob^{-T} \widehat{\bfn} }}, \quad 
\mJ \Div \bfq = \widehat{\Div}  \left( \mJ \Jacob^{-T}  \widehat{\bfq} \right), 
\quad 
\Jacob^{T} \Grad_{\bfx} r =  \widehat{\Grad}  \widehat{r} , 
\end{equation}
for a scalar function $r$ and a vector filed $\bfq$. 
Finally we denote for simplicity  
\begin{equation}\label{div_twomesh}
\reallywidehat{\Div \bfq} := \Div \bfq  \circ \ALE = \frac{1}{\mJ} \widehat{\Div}  \left(  \mJ \Jacob^{-T}  \widehat{\bfq} \right)
, \quad 
\reallywidehat{\Grad r} := \Grad r  \circ \ALE =  \Jacob^{-T} \widehat{ \Grad} \widehat{r} .
\end{equation}
\subsection*{Time difference operators}
First, let us introduce the discrete derivative in time for the displacement of the shell. As the shell function $\eta(t, {\bfr})$ is defined on the time independent reference domain, we apply the standard backward Euler discretization for the family of functions $r^k:\Sigma\to \mathbb{R}$, $k \in\{0,...,N_T\}$: 
\begin{equation}\label{def_DT1}
\delta_t r^{k}({\bfr}) = \frac{ r^{k}({\bfr})-r^{k-1}({\bfr}) }{\TS}, \; \delta_t^2 r^{k}({\bfr})=    \delta_t(\delta_t r^{k}({\bfr}))=\frac{\delta_t r^{k}({\bfr})- \delta_t r^{k-1}({\bfr})}{\TS}. 
\end{equation}

Next, for the fluid part, it is necessary to use the material time derivative in order to discretize our scheme properly.  Since the domain $\Oh$ is changing in time discretely 
we divide the material derivative in  the bulk part (inside the domain where the deformation of the elastic shell is of minor importance) and the boundary part, where we approximate it by $z^k$. We define in the following the discrete material derivative that reflects the change of the domain as
\begin{equation*}
D_t^\ALE \rh^k = \frac{ \rh^k - \rh^{k-1} \circ \bfXkm  }{\TS}, 
\end{equation*}
where $\bfXkm=\ALEhkm \circ (\ALEhk)^{-1}$ is the mapping from $\Oh(t^k)$ to $\Oh(t^{k-1})$, see \eqref{mappingij}. In the following we deviate the material derivative in a fluid part and a shell part by the following heuristics. 
Let $q$ be some quantity defined on the current domain and $\vu$ be the fluid velocity on the same domain. We deviate 
\[
\lim_{\tau\to 0}\pdt q (t+\tau, \bfx+\tau\vu(t,\bfx))
=\pdt q(t,\bfx)+ \vu \cdot \nabla q(t,\bfx)=\pdt q(t,\bfx) + \vw \cdot \nabla q(t,\bfx) + (\vu-\vw)\cdot \nabla q(t,\bfx) ,
\]
where $\vw$ denotes the velocity of the moving domain, and $(\vu-\vw)$ is the relative velocity of the fluid with respect to the moving domain. 
Note that the first two terms on the right hand side capture the material time-derivative.  Indeed $D_t^\ALE \rh^k$ is approximating that part. 
%
Further, we observe 
\begin{align*}
D_t^\ALE r^k & =\frac{r^k-r^{k-1}\circ  \bfXkm }{\TS} \pm \mJkm \frac{r^{k-1}\circ  \bfXkm }{\TS}
= \frac{r^k-r^{k-1}\circ  \bfXkm \mJkm }{\TS} - \Div \vwh^{k} \;  r^{k-1}\circ  \bfXkm ,
\end{align*}
which, as can be seen below turns out to be the suitable deviation in order to get {\sl a-priori} estimates. In addition, 
the above calculation motivates the use of the following non-standard time difference operator approximating the Eulerian time derivative
\begin{equation}\label{def_DT3}
D_t r^k = \frac{ r^k - r^{k-1} \circ \bfXkm \mJkm   }{\TS}.
\end{equation} 
We summarize the heuristics by the following discrete version of the Reynolds transport theorem.
\begin{Lemma}[Discrete Reynolds transport]\label{lem_DRT} 
For the time difference operator defined in \eqref{def_DT1} and \eqref{def_DT3}, we have the following discrete analogy of the Reynolds transport theorem. 
\begin{equation*}
\delta_t \intOhk{r^k}  = \intOhk{ D_t r^k} = \intOhk{ \left( D_t^\ALE r^k + \Div \vwh^{k} \;  r^{k-1}\circ  \bfXkm \right)}.
\end{equation*}
\end{Lemma}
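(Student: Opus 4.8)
\textbf{Proof plan for Lemma~\ref{lem_DRT} (Discrete Reynolds transport).}

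The plan is to verify the three expressions from right to left, relying only on the change-of-variables formula $\dx = \mJ \dxref$ from \eqref{Piola}, the definition \eqref{def_DT3} of $D_t r^k$, and the identity \eqref{div_jacob}. First I would establish the rightmost equality, which is purely algebraic: by the deviation already displayed in the text just before \eqref{def_DT3}, namely
\[
D_t^\ALE r^k = \frac{r^k - r^{k-1}\circ\bfXkm \mJkm}{\TS} - \Div\vwh^k \; r^{k-1}\circ\bfXkm = D_t r^k - \Div\vwh^k\; r^{k-1}\circ\bfXkm,
\]
rearranging gives $D_t r^k = D_t^\ALE r^k + \Div\vwh^k\; r^{k-1}\circ\bfXkm$ pointwise on $\Oh^k$, hence the two integrands on the right of the claimed identity agree before integration. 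So the substantive claim is the first equality, $\delta_t \intOhk{r^k} = \intOhk{D_t r^k}$.

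For that equality I would pull both sides back to the reference domain $\Oref$. Using \eqref{two_domain} and \eqref{Piola}, $\intOhk{r^k} = \intOref{\hvrh^k \, \mJ_0^k}$ — more precisely $\int_{\Oh^k} r^k\dx = \int_{\Oref} \widehat{r}^k \, \mJ_0^k \dxref$ where $\mJ_0^k$ is the determinant of the Jacobian of $\ALEh^k$, equal to $(\eta^k+H)/H$ by \eqref{jacob}. Likewise $\intOhkm{r^{k-1}} = \intOref{\widehat{r}^{k-1}\mJ_0^{k-1}}$. Therefore
\[
\TS\,\delta_t\intOhk{r^k} = \intOref{\left(\widehat{r}^k \mJ_0^k - \widehat{r}^{k-1}\mJ_0^{k-1}\right)}.
\]
On the other side, $\int_{\Oh^k} D_t r^k \dx = \int_{\Oh^k} \frac{r^k - r^{k-1}\circ\bfXkm\,\mJkm}{\TS}\dx$; the term $\int_{\Oh^k} r^k \dx$ pulls back to $\int_{\Oref}\widehat r^k \mJ_0^k\dxref$ as above, while for the second term I would change variables along $\bfXkm:\Oh^k\to\Oh^{k-1}$, whose Jacobian determinant is $\mJ_k^{k-1}=(\eta^{k-1}+H)/(\eta^k+H)$ by \eqref{jacob}. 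Since $\int_{\Oh^k} (r^{k-1}\circ\bfXkm)\,\mJkm \dx = \int_{\Oh^{k-1}} r^{k-1} \dy$ (the factor $\mJkm$ is exactly the Jacobian of the map, so it cancels correctly — note $\bfx^k\mapsto\bfx^{k-1}=\bfXkm(\bfx^k)$ gives $\dy = \mJkm\dx$), this equals $\int_{\Oref}\widehat r^{k-1}\mJ_0^{k-1}\dxref$. Subtracting and dividing by $\TS$ matches the expression for $\delta_t\intOhk{r^k}$, completing the proof.

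The only point that needs a little care — and what I expect to be the mild obstacle — is bookkeeping the Jacobian factors consistently: the determinant $\mJ_i^j = (\eta^j+H)/(\eta^i+H)$ is the determinant of the map from $\Oh^i$ to $\Oh^j$, so when one substitutes in $\int_{\Oh^k}(r^{k-1}\circ\bfXkm)\mJkm\dx$ one must check that $\mJkm$ is precisely $\det(\partial\bfXkm/\partial\bfx^k)$ and not its reciprocal; with $\bfXkm = \ALEhkm\circ(\ALEhk)^{-1}$ this is indeed $\mJ_0^{k-1}/\mJ_0^k = (\eta^{k-1}+H)/(\eta^k+H)$, so the substitution produces $\int_{\Oh^{k-1}} r^{k-1}\dy$ cleanly. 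Everything else is a direct computation, and no integrability hypothesis beyond $r^k\in L^1(\Oh^k)$, $r^{k-1}\in L^1(\Oh^{k-1})$ is needed since all the Jacobians are bounded above and below under the standing assumption $\eta^k\in(\delta_0-H, H_{\max}-H)$.
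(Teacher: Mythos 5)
Your proof is correct and uses the same key step as the paper: the change of variables along $\bfXkm$ with Jacobian determinant $\mJkm$, which converts $\int_{\Oh^{k}}r^{k-1}\circ\bfXkm\,\mJkm\dx$ into $\int_{\Oh^{k-1}}r^{k-1}\dx$. The detour through the reference domain $\Oref$ is harmless but unnecessary — the paper performs this change of variables directly between $\Oh^{k-1}$ and $\Oh^k$ in one line.
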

\begin{proof} From the definition of time difference operators and the determinant of the Jacobian given in \eqref{jacob}, we easily get 
\begin{equation*} 
\begin{aligned}
\delta_t \intOhk{r^k} &= \frac{1}{\TS } \left( \intOhk{r^k} - \intOhkm{r^{k-1}}\right)  
 =
 \frac{1 }{\TS} \left( \intOhk{r^k} - \int_{\Oh^{k}} r^{k-1} \circ \bfXkm   \det\left(\frac{\pd \bfx^{k-1}}{\pd \bfx^{k}}\right) {\rm d} \bfx^{k}   \right)
\\&= \intOhk{ \frac{ r^k - r^{k-1} \circ \bfXkm \mJkm   }{\TS} }  
 = \intOhk{ D_t r^k} = \intOhk{ \left( D_t^\ALE r^k + \Div \vwh^{k} \;  r^{k-1}\circ  \bfXkm \right)}.
\end{aligned}
\end{equation*}
\end{proof}
Note that the discrete Reynolds transport holds also for any $C\subset \Oh$. Thus we obtain the geometric conservation law by taking $r=1$ 
\begin{equation}\label{GC}
\frac{1}{\TS} \left( |C^k|  - |C^{k-1}|  \right)= \int_{C^k} \Div \vwh^{k} \dx = \int_{\pd C^k}  \vwh^{k} \cdot \bfn \ds .
\end{equation}

\section{Semi-discrete scheme}\label{sec:SS}
This section introduces the necessary tools and observations with respect to the time discretization. Due to the overwhelming technical notation in the fully discrete case we decided to include this semi-discrete section. We wish to emphasize that the main objective of this section is to explain the methodology.
Hence we will assume within this section that the discrete in time but continuous solutions in space introduced below exist and are bounded in spaces in such a way that the discrete energy is well defined. We assume further (for this section) that the all needed test functions are admissible without further justification.

\subsection{The scheme}
The analysis is best understood when considering the scheme in the current domain (which is changing in each time step). However, for applications the scheme defined on the reference domain seems more handable to be implemented (see also the next section). Hence we first introduce the semi-discrete ALE scheme on the current domain followed by its equivalent formulation on the fixed reference geometry.
For the spaces of existence we simply assume, that $\Wh(\Sigma)\subset W^{2,2}_0(\Sigma)$, $\Qh(\Oh)\subset L^\gamma(\Oh)$ and $\Vh(\Oh)\subset W^{1,s}(\Oh)$ for all $s<2$.


\begin{Definition}[Semi-discrete scheme on the current domain]\label{def_SS} 
For all $k \in \{ 1,\ldots, N_T \}$  we seek the solution $(\etah^{k} ,\vrh^{k}, \vuh^{k} ) \in (\Wh(\Sigma),\Qh(\Oh^k),\Vh(\Oh^k))$ such that for all (admissible) $(\psih,\varphih, \bfPsih) \in ( \Wh(\Sigma), \Qh(\Oh^k), \Vh(\Oh^k))$ with $\bfPsih|_{\Gamma_S}\circ\ALEh = \psih \bfe_d$ the following hold:
\begin{subequations}\label{SS}
\begin{equation} \label{SS_D}
\intOk{ D_t \vrh^{k} \varphih}  + \intOk{\Div ( \vrh^{k} \vvh^{k} ) \varphih}  
= 0;
\end{equation}
\begin{multline}\label{SS_M}
\intOk{ D_t \left( \vrh^{k}  \vuh^{k} \right) \cdot \bfPsih +\Div (\vrh^{k} \vuh^{k}\otimes \vvh^{k} ) \cdot \bfPsih }
+  \intOk{ \bfS(\Grad \vuh^{k}) : \Grad \bfPsih } 
- \intOk{ p(\vrh^{k}) \Div \bfPsih }\\
+ \intS{ \delta_t \zh^{k}  \psih }  + \alpha \intS{  \Delta \etah^{k} \Delta \psih } 
 + \beta \intS{  \Grad \etah^{k} \cdot \Grad \psih } 
=
  \intOk{\vrh^k \bfh^{k}\cdot \bfPsih}  + \intS{\gh^{k} \psih} ;
\end{multline}
where 
$ \zh^k = \delta_t \etah^k, \quad  \vvh^{k} = \vuh^{k}-\vwh^{k}, \quad 
\gh^k:=\frac{1}{\eps}\int_{I^k} g\, \dt \text{ and }\bfh^k:=\frac{1}{\eps}\int_{I^k} \bff\, \dt.$  
The scheme is supplemented with the initial data  and  boundary conditions 
\begin{equation*}
 \vrh^0 = {\vr_0}, \quad \vuh^0 = {\vu_0}, \quad \etah^0=0, \quad \zh^{0}= 0, \quad 
 \vuh^k|_{\pd \Oh} = \vwh^k|_{\pd \Oh}.
\end{equation*}  
\end{subequations}
\end{Definition}
Concerning the solvability of the scheme, we will discuss later for the fully discrete scheme, see Theorem~\ref{thm:exist}.

\subsection{Stability}\label{sec:stability}
In this section, we aim to show some stability properties for the scheme \eqref{SS}. 
First, we remark that the scheme \eqref{SS} preserves the total mass. 
Indeed, by setting $\varphih \equiv 1$ in \eqref{SS_D} and applying the discrete Reynolds transport Lemma~\ref{lem_DRT}, we derive $\delta_t \left( \intOk{ \vrh^k} \right)=\intOk{D_t \vrh^k}=0$ for all $k=1,\ldots,N_T$, which implies 
\begin{equation}\label{MC}
 \intOk{\vrh^k} = \intOhkm{\vrh^{k-1}} = \cdots = \intOref{\vrh^0} =:M_0, \; \mbox{ for all } k=1,\ldots,N_T.
\end{equation}
Next, we show the renormalization of the discrete density problem. 
\begin{Lemma}[Renormalized continuity equation]\label{lem_r0}  
Let $(\vrh,\vuh)\in  \Qh \times \Vh$ satisfy the discrete continuity equation \eqref{SS_D} with the boundary condition $\vuh|_{\pd \Oh} = \vwh|_{\pd \Oh}$. 
Then for any $B \in C^1(R)$ it holds
\begin{equation*}
\frac{ 1}{\TS} \left( \intOk{  B(\vrh^k)}  - \intOkm{ B(\vrh^{k-1}) }  \right)
+ \intOk{  \left( \vrh^k B'(\vrh^k)- B(\vrh^k) \right) \Div \vuh^{k} } + D_0 =0.
\end{equation*}
where 
$D_0=  \frac{1}{\TS}\intOk{  \mJkm  \left( B(\vrh^{k-1}\circ \bfXkm)  - B(\vrh^k) - B'(\vrh^{k}) \big(  \vrh^{k-1}\circ \bfXkm-\vrh^k  \big)  \right)  }. 
$
Moreover, $D_0\geq 0$ if $B$ is convex. 
\end{Lemma}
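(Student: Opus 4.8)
The natural strategy is to test the discrete continuity equation \eqref{SS_D} with the renormalization multiplier $\varphi_\eps = B'(\vrh^k)$ (assumed admissible in this semi-discrete setting) and then manipulate the two resulting terms. The first term $\intOk{D_t \vrh^k \, B'(\vrh^k)}$ contains the non-standard time difference $D_t \vrh^k = \frac{1}{\TS}(\vrh^k - \vrh^{k-1}\circ\bfXkm \mJkm)$. I would add and subtract so as to produce the clean telescoping difference $\frac1\TS(\intOk{B(\vrh^k)} - \intOkm{B(\vrh^{k-1})})$: using the change of variables $\bfx^{k-1}\mapsto \bfx^k$ with Jacobian determinant $\mJkm$ (exactly as in the proof of Lemma~\ref{lem_DRT}) one has $\intOkm{B(\vrh^{k-1})} = \intOk{\mJkm B(\vrh^{k-1}\circ\bfXkm)}$. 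Hence
\[
\intOk{ D_t \vrh^k \, B'(\vrh^k)} = \frac1\TS\Big(\intOk{B(\vrh^k)} - \intOkm{B(\vrh^{k-1})}\Big) + D_0,
\]
where $D_0$ is precisely the pointwise Taylor-type remainder written in the statement, namely $\frac1\TS\intOk{\mJkm\big(B(\vrh^{k-1}\circ\bfXkm) - B(\vrh^k) - B'(\vrh^k)(\vrh^{k-1}\circ\bfXkm - \vrh^k)\big)}$. This is just algebra: $B'(\vrh^k)\,D_t\vrh^k = \frac1\TS B'(\vrh^k)(\vrh^k - \vrh^{k-1}\circ\bfXkm\mJkm)$, and one regroups the terms $B(\vrh^k) - \mJkm B(\vrh^{k-1}\circ\bfXkm)$ against the rest.

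\textbf{The flux term.} For the second term $\intOk{\Div(\vrh^k \vvh^k) B'(\vrh^k)}$ I would integrate by parts. Since $\vvh^k = \vuh^k - \vwh^k$ and the boundary condition gives $\vuh^k = \vwh^k$ on $\pd\Oh^k$, the field $\vvh^k$ vanishes on $\pd\Oh^k$, so no boundary contribution survives. Then $\Div(\vrh^k\vvh^k) B'(\vrh^k) = -\vrh^k \vvh^k\cdot\nabla B'(\vrh^k) + \text{(divergence)}$, and $\vrh^k\nabla B'(\vrh^k)\cdot\vvh^k = \nabla(\vrh^k B'(\vrh^k) - B(\vrh^k))\cdot\vvh^k$ by the chain rule. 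A second integration by parts on this last expression (again with no boundary term) yields $\intOk{(\vrh^k B'(\vrh^k) - B(\vrh^k))\Div\vvh^k}$. Finally, using $\Div\vvh^k = \Div\vuh^k - \Div\vwh^k$ together with the geometric identity \eqref{div_jacob}, $\TS\Div\vwh^k = 1 - \mJkm$, one checks that the $\Div\vwh^k$ contribution recombines with the telescoping term; more directly, one can simply note that the discrete Reynolds transport applied to $r^k = B(\vrh^k)$ already accounts for the $\mJkm$ factor, so that the net outcome is $\intOk{(\vrh^k B'(\vrh^k) - B(\vrh^k))\Div\vuh^k}$, which is the second term in the claimed identity. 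Collecting everything gives the stated equation.

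\textbf{Convexity and sign of $D_0$.} The nonnegativity of $D_0$ when $B$ is convex is immediate: $\mJkm = \frac{\eta^{k-1}+H}{\eta^k+H} > 0$ pointwise (the deformation stays above $\delta_0 - H$), and for convex $B$ the quantity $B(b) - B(a) - B'(a)(b-a) \geq 0$ for all $a,b$ (first-order Taylor with nonnegative remainder). Applying this with $a = \vrh^k(\bfx)$ and $b = (\vrh^{k-1}\circ\bfXkm)(\bfx)$ pointwise, then multiplying by the positive weight $\mJkm$ and integrating, shows each integrand is nonnegative, hence $D_0 \geq 0$.

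\textbf{Main obstacle.} The only genuinely delicate point is the bookkeeping of the geometric Jacobian: one must be careful that the change of variables producing $\intOkm{B(\vrh^{k-1})} = \intOk{\mJkm B(\vrh^{k-1}\circ\bfXkm)}$ is consistent with the definition of $D_t$ and with \eqref{div_jacob}, so that the $\Div\vwh^k$ terms from the flux integration-by-parts cancel exactly against the extra $\mJkm$-dependence rather than leaving a spurious remainder. Keeping track of which domain each integral lives on and invoking Lemma~\ref{lem_DRT} (with $r^k = B(\vrh^k)$) rather than redoing the transport by hand is the cleanest way to avoid sign errors here. Everything else is routine integration by parts and a pointwise convexity inequality.
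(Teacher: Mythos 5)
Your overall strategy is exactly the paper's: test \eqref{SS_D} with $\varphih = B'(\vrh^k)$, split the time-derivative term by a Taylor-type decomposition, integrate the flux term by parts twice, and conclude using $\vvh = \vuh - \vwh$. The convexity argument for $D_0\geq 0$ is also the same. However, there is a bookkeeping error in your displayed identity for the time-derivative term, namely
\[
\intOk{ D_t \vrh^k \, B'(\vrh^k)} = \frac1\TS\Big(\intOk{B(\vrh^k)} - \intOkm{B(\vrh^{k-1})}\Big) + D_0 .
\]
This is not correct. The pointwise algebraic decomposition of $B'(a)(a-b\,\mJ)$ with $a=\vrh^k$, $b = \vrh^{k-1}\circ\bfXkm$, $\mJ=\mJkm$ reads
\[
B'(a)(a-b\mJ) = \big(B(a)-B(b)\mJ\big) + \big(aB'(a)-B(a)\big)(1-\mJ) + \mJ\big(B(b)-B(a)-B'(a)(b-a)\big),
\]
so after dividing by $\TS$, integrating, and using $\TS\,\Div\vwh^k = 1-\mJkm$ (\eqref{div_jacob}), the time-derivative term equals the telescoping difference \emph{plus} $\intOk{(\vrh^k B'(\vrh^k)-B(\vrh^k))\,\Div\vwh^k}$ \emph{plus} $D_0$. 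It is precisely this extra $\Div\vwh^k$-term that cancels the $-\Div\vwh^k$-contribution arising from your flux decomposition $\Div\vvh^k = \Div\vuh^k - \Div\vwh^k$; the cancellation has nothing to do with the telescoping difference, and the closing remark that the $\Div\vwh^k$ contribution ``recombines with the telescoping term'' misattributes where it happens. If you write the decomposition as displayed, the flux term would contribute $\intOk{(\cdot)\Div\vvh^k}$, and the sum would equal the stated identity only after subtracting an unaccounted $\intOk{(\cdot)\Div\vwh^k}$. With the missing term restored, everything matches the paper's proof.
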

\begin{proof}
We set $\varphih =B'(\vrh^k)$ in the discrete density equation \eqref{SS_D} and obtain
\begin{equation*}
\intOk{ D_t \vrh^{k} B'(\vrh^{k}) }  + \intOk{\Div \left( \vrh^{k} \vvh^{k}  \right) B'(\vrh^{k}) } 
 = 0 .
\end{equation*}
First, by applying the Taylor expansion, we know there exist $\xi \in \co{\vrh^{k-1}\circ \bfXkm}{\vrh^{k}}$ that 
\begin{equation}\label{N1}
\begin{split}
& \intOk{ D_t \vrh^{k} B'(\vrh^{k})} =\intOk{ \frac{\vrh^{k} -\vrh^{k-1}\circ \bfXkm \mJkm }{\TS} B'(\vrh^{k}) }  
\\&= 
\frac{1}{\TS}\intOk{  \bigg(  B(\vrh^k) - B(\vrh^{k-1}\circ \bfXkm) \mJkm  +   
\left( \vrh^k B'(\vrh^k)- B(\vrh^k) \right)  
+ \mJkm \Big( B(\vrh^{k-1}\circ \bfXkm)  - \vrh^{k-1}\circ \bfXkm  B'(\vrh^{k}) \Big)  \bigg)} 
\\&= 
\frac{1}{\TS}\intOk{  \left( B(\vrh^k) - B(\vrh^{k-1}\circ \bfXkm) \mJkm  \right) } 
+ \frac{1}{\TS}\intOk{  \left( \vrh^k B'(\vrh^k)- B(\vrh^k) \right)(1-\mJkm) }
\\ & \qquad \qquad    + \frac{1}{\TS}\intOk{  \mJkm  \left( B(\vrh^{k-1}\circ \bfXkm)  - B(\vrh^k) - B'(\vrh^{k}) \big(  \vrh^{k-1}\circ \bfXkm-\vrh^k  \big)  \right)  } 
\\& = 
\frac{ 1}{\TS} \left( \intOk{  B(\vrh^k)}  - \intOkm{ B(\vrh^{k-1}) }  \right)
+  \intOk{ \left( \vrh^k B'(\vrh^k)- B(\vrh^k) \right) \Div \vwh^k} +D_0
\end{split}
\end{equation}
where we have used relation between the Jacobian and the deformation rate of the domain given in \eqref{div_jacob}.  
Next, by applying integration by parts twice, we reformulate the convective term as 
\begin{align*}
& \intOk{  \Div (\vrh^{k} \vvh^{k}) B'(\vrh^{k}) } 
 =  -\intOk{ \vrh^{k}\vvh^{k} \cdot \Grad B'(\vrh^{k})  }
=  -\intOk{ \vvh^{k} \cdot \Grad \left(\vrh^{k} B'(\vrh^{k}) - B(\vrh^{k} )\right)  }
\\ &
=  \intOk{ \Div \vvh^{k} \left(\vrh^{k} B'(\vrh^{k}) - B(\vrh^{k} )\right)  } , 
\end{align*}
where we have used the equality $ \Grad \Big(\vr B'(\vr) -B \Big) =   \vr \Grad B'(\vr)$.
Consequently, summing up the above equations and seeing $\vvh = \vuh -\vwh$, we complete the proof of the identity. Now, if $B\in C^2(\mathbb{R})$ is convex, we use the fact that by Taylor expansion there exists a $\xi(\bfx) \in \co{\vrh^{k-1}\circ \bfXkm(\bfx)}{\vrh^{k}(\bfx)}$ for all $\bfx\in \Oh^k$, such that 
$
D_0=\intOk{  \TS \mJkm  \frac{B''(\xi)}{2} \left| D_t^\ALE \vrh^{k} \right|^2 }\geq 0.
$
The general convex case follows by approximation. 
\end{proof}

With the renormalized continuity equation in hand, we are ready to show non-negativity of the discrete density and the internal energy balance. 
\begin{Lemma}[Non-negativity of density]\label{lem_non-negative}  
Any solution to the scheme~\eqref{SS} preserves non-negativity of the density. It mean $\vrh^{k} \geq 0$ for all $k=1,\ldots, N_T$ provided $\vrh^0 \geq 0$.
\end{Lemma}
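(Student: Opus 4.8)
The plan is to derive non-negativity from the renormalized continuity equation of Lemma~\ref{lem_r0} by choosing a convex renormalization that isolates the negative part of the discrete density, and then to close the argument by induction on the time level $k$. \textbf{First}, I would take $B(\vr):=(\min\{\vr,0\})^2$, which is $C^1$, non-negative and convex, satisfies $B\equiv 0$ on $[0,\infty)$, has $B'(\vr)=2\min\{\vr,0\}$, and obeys the pointwise algebraic identity $\vr B'(\vr)-B(\vr)=B(\vr)$. Feeding this $B$ into Lemma~\ref{lem_r0} (whose statement already covers the merely $C^1$ convex case and in particular delivers $D_0\ge 0$) and using that identity, one gets, for each $k\in\{1,\dots,N_T\}$,
\[
\intOk{B(\vrh^k)}+\TS\intOk{B(\vrh^k)\,\Div\vuh^{k}}+\TS\,D_0=\intOkm{B(\vrh^{k-1})}.
\]

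\textbf{Next}, since $D_0\ge 0$, I would absorb the sign-indefinite divergence term: bounding $\intOk{B(\vrh^k)\,\Div\vuh^{k}}\ge -C_k\intOk{B(\vrh^k)}$ with $C_k$ a bound on $\Div\vuh^{k}$ (finite under the standing boundedness assumptions of this section), this yields $(1-\TS C_k)\intOk{B(\vrh^k)}\le\intOkm{B(\vrh^{k-1})}$. Then I would run an induction on $k$: for $k=0$ the assumption $\vrh^0\ge 0$ gives $B(\vrh^0)\equiv 0$; assuming $\vrh^{k-1}\ge 0$, the right-hand side vanishes, and --- provided $\TS$ is small enough that $1-\TS C_k>0$ --- we get $\intOk{B(\vrh^k)}\le 0$, hence $B(\vrh^k)\equiv 0$ and thus $\vrh^k\ge 0$. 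This closes the induction and proves the claim.

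\textbf{The main obstacle} is controlling the sign-indefinite term $\int_{\Oh^k}B(\vrh^k)\,\Div\vuh^{k}$: since in the semi-discrete setting $\vuh$ lies only in $W^{1,s}$ with $s<2$, $\Div\vuh^{k}$ need not be bounded, so one must either invoke the blanket regularity assumption made throughout this section, or --- cleaner --- replace $B$ by a family of smooth strictly convex approximations $B_\delta$ vanishing on $[\delta,\infty)$ and with bounded first derivative, carry out the discrete Gronwall step for each $B_\delta$, and pass to the limit $\delta\to 0$. Either route gives the same conclusion. I would also point out that only non-negativity (not strict positivity) is claimed here; strict positivity is recovered later for the fully discrete scheme, where the upwinded mass flux supplies a genuine discrete maximum principle.
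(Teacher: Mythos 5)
Your proposal is correct in structure but takes a genuinely different route and, as written, requires hypotheses the paper's proof does not. The paper chooses $B(\vr)=\max\{0,-\vr\}$, for which the pointwise identity $\vr B'(\vr)-B(\vr)\equiv 0$ holds; this makes the divergence term $\intOk{(\vr B'(\vrh^k)-B(\vrh^k))\,\Div\vuh^k}$ vanish identically, so Lemma~\ref{lem_r0} collapses to $\intOk{B(\vrh^k)} + \TS D_0 = \intOkm{B(\vrh^{k-1})}$ with $D_0\ge 0$, and $\vrh^k\ge 0$ follows at once by induction with no smallness condition on $\TS$ and no integrability requirement on $\Div\vuh^k$ beyond what is already in the scheme. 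Your choice $B(\vr)=(\min\{\vr,0\})^2$ instead satisfies $\vr B'(\vr)-B(\vr)=B(\vr)$, so the divergence term survives and you must absorb it via a discrete Gronwall step. That step needs both $\Div\vuh^k\in L^\infty(\Oh^k)$ --- not available here, since $\vuh^k$ is only assumed to lie in $W^{1,s}(\Oh^k)$ for $s<2$ --- and the smallness restriction $\TS C_k<1$, neither of which the lemma as stated imposes. You flag this yourself and suggest passing to approximations $B_\delta$ with bounded first derivative vanishing on $[\delta,\infty)$; but a convex function with those properties has at most linear growth, so this repair essentially reproduces the paper's linear $B$ in the limit $\delta\to 0$. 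What your quadratic $B$ buys you is genuine $C^1$ regularity, so Lemma~\ref{lem_r0} applies without invoking its ``general convex case by approximation'' clause (which the paper's non-$C^1$ choice implicitly relies on); what it costs you is exactly the cancellation of the divergence term, and that cancellation is what makes the paper's argument unconditional. Your closing remark that only non-negativity (not strict positivity) is claimed here, with strict positivity deferred to the fully discrete upwind scheme, matches the paper.
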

\begin{proof}
By setting  $B(\vr) = \max\{0,-\vr\} \geq 0$ in Lemma~\ref{lem_r0} and 
assuming $\vrh^{k-1} \geq 0$, we observe   
\[  B(\vrh^{k-1}) =0,\; \quad \vrh^k B'(\vrh^k)- B(\vrh^k) = 0, \;
\left( B(\vrh^{k-1}\circ \bfXkm)  - B(\vrh^k) - B'(\vrh^{k}) \big(  \vrh^{k-1}\circ \bfXkm-\vrh^k  \big)  \right) \geq 0. \]
Thus we find $\intOhk{B(\vrh^k)} =0$.  
Realizing $B$ is a non-negative function we know that $B(\vrh^k)=0$ holds for all $\bfx \in \Oh^k$ which implies $\vrh^k \geq 0$. As $\vrh^0 \geq 0$ we finish the proof by mathematical induction. 
\end{proof}
Further discussion on the strictly positivity of the discrete density will be shown for the fully discrete scheme in Lemma~\ref{lem_positive} in the next section. 

Next, by setting $B=\Hc(\vr)$ in Lemma \ref{lem_r0} and realizing $p=\vr \Hc'(\vr) -\Hc$, we derive the following relation on the internal energy.
\begin{Corollary}[Internal energy balance]
Let $(\vrh,\vuh, \etah)\in  \Qh \times \Vh \times \Wh$ be the solution of the discrete problem \eqref{SS}.  Then there exists $\xi(\bfx) \in \co{\vrh^{k-1}\circ \bfXkm(\bfx)}{\vrh^{k}(\bfx)}$  such that
\begin{equation}\label{r1H}
\begin{aligned}
& \frac{ 1}{\TS} \left( \intOk{  \Hc(\vrh^k)}  - \intOkm{ \Hc(\vrh^{k-1}) }  \right)
+ \intOk{  p(\vrh^{k}) \Div \vuh^{k} } + D_1 =0,\quad \mbox{with} \\
& D_1  = 
 \frac{1}{\TS}\intOk{  \mJkm  \left( \Hc(\vrh^{k-1}\circ \bfXkm)  - \Hc(\vrh^k) - \Hc'(\vrh^{k}) \big(  \vrh^{k-1}\circ \bfXkm-\vrh^k  \big)  \right)  } 
  =
\intOk{  \TS \mJkm   \frac{\Hc''(\xi)}{2} \left| D_t^\ALE \vrh^{k} \right|^2 }  \geq 0.
\end{aligned}
\end{equation} 

\end{Corollary}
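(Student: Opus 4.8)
The plan is to obtain the statement as a direct specialization of the renormalized continuity equation, Lemma~\ref{lem_r0}, to the choice $B=\Hc$. First I would note that by Lemma~\ref{lem_non-negative} the discrete density satisfies $\vrh^k\ge 0$ for all $k$, so that $\Hc(\vrh^k)=\frac{a}{\gamma-1}(\vrh^k)^\gamma$ is well defined, and that $\Hc$ is convex on $[0,\infty)$ since $\gamma>1$. Hence Lemma~\ref{lem_r0} applies with $B=\Hc$ and yields
\[
\frac{1}{\TS}\left(\intOk{\Hc(\vrh^k)}-\intOkm{\Hc(\vrh^{k-1})}\right)
+\intOk{\big(\vrh^k\Hc'(\vrh^k)-\Hc(\vrh^k)\big)\Div\vuh^{k}}+D_1=0,
\]
where $D_1$ is precisely the quantity $D_0$ of Lemma~\ref{lem_r0} for this particular $B$, i.e.\ the first displayed expression for $D_1$ in \eqref{r1H}.

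Next I would use the thermodynamic identity $p(\vr)=\vr\Hc'(\vr)-\Hc(\vr)$, checked by the elementary computation $\vr\Hc'(\vr)=\frac{a\gamma}{\gamma-1}\vr^\gamma$, hence $\vr\Hc'(\vr)-\Hc(\vr)=a\vr^\gamma=p(\vr)$. Substituting this into the second integral above turns it into $\intOk{p(\vrh^k)\Div\vuh^{k}}$, which is the pressure term claimed in \eqref{r1H}. It then remains to rewrite $D_1$ in the quadratic form: for $\gamma\ge 2$, $\Hc\in C^2$, and I would apply Taylor's theorem with Lagrange remainder to $\Hc$ between the values $\vrh^k(\bfx)$ and $(\vrh^{k-1}\circ\bfXkm)(\bfx)$, obtaining $\xi(\bfx)\in\co{(\vrh^{k-1}\circ\bfXkm)(\bfx)}{\vrh^{k}(\bfx)}$ with
\[
\Hc(\vrh^{k-1}\circ\bfXkm)-\Hc(\vrh^k)-\Hc'(\vrh^k)\big(\vrh^{k-1}\circ\bfXkm-\vrh^k\big)
=\frac{\Hc''(\xi)}{2}\big(\vrh^{k-1}\circ\bfXkm-\vrh^k\big)^2.
\]
Since $\vrh^{k-1}\circ\bfXkm-\vrh^k=-\TS\, D_t^\ALE\vrh^{k}$, the square equals $\TS^2|D_t^\ALE\vrh^{k}|^2$; inserting this and cancelling one factor $\TS$ against the $1/\TS$ in front of $D_1$ gives $D_1=\intOk{\TS\,\mJkm\,\frac{\Hc''(\xi)}{2}|D_t^\ALE\vrh^{k}|^2}$. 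As $\mJkm>0$ and $\Hc''\ge 0$ by convexity, this shows $D_1\ge 0$.

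I do not anticipate a genuine obstacle; the argument is essentially a one-line corollary of Lemma~\ref{lem_r0}. The only point needing a little care is that for $1<\gamma<2$ the function $\Hc$ fails to be $C^2$ at the origin, so the pointwise Taylor-remainder step is not immediately licit there; this is handled, exactly as in the proof of Lemma~\ref{lem_r0}, by approximating $\Hc$ by a family of convex $C^2$ functions, deriving the identity and the sign $D_1\ge 0$ for each member of the family, and passing to the limit using $\vrh^k\ge 0$ together with the boundedness of $\vrh^k$ in $\Qh$.
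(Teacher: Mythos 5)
Your proof is correct and follows essentially the same route as the paper, which itself presents the Corollary as an immediate consequence of Lemma~\ref{lem_r0} with $B=\Hc$ together with the identity $p(\vr)=\vr\Hc'(\vr)-\Hc(\vr)$. The one place you go beyond the paper is in spelling out how to justify the Taylor-remainder form of $D_1$ when $1<\gamma<2$ (where $\Hc''$ blows up at the origin); the paper leaves this implicit, deferring to the ``general convex case follows by approximation'' remark in the proof of Lemma~\ref{lem_r0}, and your approximation argument is a reasonable way to make that precise.
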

Finally, we proceed to show the energy stability of the scheme \eqref{SS}. 
\begin{Theorem}[Energy estimates]\label{thm_s1}  
Let $\left(\vrh^{k}, \vuh^{k}, \etah^{k}\right)_{k=1}^{N_T}$ be a family of numerical solutions obtained by scheme~\eqref{SS}. Then the following energy estimate holds 
\begin{multline}\label{energy_stability} 
 \delta_t \left(  \intOk{  E_f^{k} }   +  \intS{   E_s^{k}}  \right) 
+ \frac{\TS}2  \intS{ \left( |\delta_t \zh^{k} |^2+ \alpha \left|\Delta \zh^{k} \right|^2 + \beta \left| \Grad \zh^{k}\right|^2  \right) } 
  +  \intOk{ \left(2\mu|\bfD(\vuh^{k}) |^2 +   \lambda |\Div \vuh^{k} |^2 \right)    }
\\ + \intOk{  \TS \mJkm   \frac{\Hc''(\xi)}{2} \left| D_t^\ALE \vrh^{k} \right|^2 }  
 + \intOk{\frac{\TS}{2} \vrh^{k-1} \circ \bfXkm
\left| D_t^\ALE  \vuh^{k} \right|^2  }
  =  \intOk{\vrh^k \bfh^{k}\cdot \vuh^{k}} + \intS{\gh^{k} \zh^{k}}.
\end{multline}
where 
$\quad E_f^{k}=  \frac{1}{2} \vrh^{k} \left| \vuh^{k}\right|^2 + \Hc(\vrh^{k}) , 
\quad E_s^{k} =\frac12 (|\zh^{k}|^2 +  \alpha |\Delta \etah^{k}|^2 +  \beta |\Grad \etah^{k}|^2)$.
\end{Theorem}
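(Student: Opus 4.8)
The plan is to derive the energy identity by testing the three equations of the scheme with the natural candidates, namely $\varphi_\eps = -\frac12|\vuh^{k}|^2$ in the continuity equation \eqref{SS_D}, $\bfPsih = \vuh^{k}$ in the momentum equation \eqref{SS_M}, and using $\psih = \zh^{k}$ for the shell part (which is consistent with the coupling condition since $\vuh^k|_{\Gamma_S}\circ\ALEh = \zh^k\bfe_d$). Adding these and invoking the internal energy balance \eqref{r1H} for the pressure term will produce the stated identity, provided we correctly track all the quadratic remainder terms coming from the discrete time differencing.

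The key steps, in order: \emph{(i)} Handle the kinetic energy. Using the discrete Reynolds transport Lemma~\ref{lem_DRT} with $r^k=\frac12\vrh^k|\vuh^k|^2$, together with the continuity equation tested against $-\frac12|\vuh^k|^2$, one rewrites $\intOk{D_t(\vrh^k\vuh^k)\cdot\vuh^k + \Div(\vrh^k\vuh^k\otimes\vvh^k)\cdot\vuh^k}$ as $\delta_t\intOk{\frac12\vrh^k|\vuh^k|^2}$ plus a nonnegative dissipative remainder. The standard algebraic identity here is $a\cdot(a-b) = \frac12|a|^2 - \frac12|b|^2 + \frac12|a-b|^2$ applied pointwise with $a=\vuh^k$, $b=\vuh^{k-1}\circ\bfXkm$, weighted by $\vrh^{k-1}\circ\bfXkm\,\mJkm$ and using mass conservation along particle paths; this is what produces the term $\intOk{\frac{\TS}{2}\vrh^{k-1}\circ\bfXkm|D_t^\ALE\vuh^k|^2}$. \emph{(ii)} The viscous term $\intOk{\bfS(\Grad\vuh^k):\Grad\vuh^k}$ is simply $\intOk{2\mu|\bfD(\vuh^k)|^2 + \lambda|\Div\vuh^k|^2}$ by definition of $\bfS$ and symmetry. \emph{(iii)} The pressure term $-\intOk{p(\vrh^k)\Div\vuh^k}$ is replaced using the internal energy balance \eqref{r1H}, contributing $\delta_t\intOk{\Hc(\vrh^k)}$ plus the nonnegative $D_1 = \intOk{\TS\mJkm\frac{\Hc''(\xi)}{2}|D_t^\ALE\vrh^k|^2}$. \emph{(iv)} For the shell terms with $\psih=\zh^k$: the identity $\intS{\delta_t\zh^k\,\zh^k} = \frac12\delta_t\intS{|\zh^k|^2} + \frac{\TS}{2}\intS{|\delta_t\zh^k|^2}$ (again $a(a-b)=\ldots$ with $a=\zh^k$) gives the $|\delta_t\zh^k|^2$ dissipation; and since $\zh^k=\delta_t\etah^k$, the bending and stretching terms give $\alpha\intS{\Delta\etah^k\Delta\delta_t\etah^k} = \frac{\alpha}{2}\delta_t\intS{|\Delta\etah^k|^2} + \frac{\alpha\TS}{2}\intS{|\Delta\zh^k|^2}$ and similarly for $\beta\intS{\Grad\etah^k\cdot\Grad\zh^k}$. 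Collecting everything yields \eqref{energy_stability}.

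The main obstacle I expect is step \emph{(i)}: carefully combining the convective term $\Div(\vrh^k\vuh^k\otimes\vvh^k)\cdot\vuh^k$, the discrete material derivative $D_t(\vrh^k\vuh^k)\cdot\vuh^k$, the continuity equation tested with $-\frac12|\vuh^k|^2$, and the geometric/Jacobian factors $\mJkm$ and $\vwh^k$, so that exactly the right dissipative square $\frac{\TS}{2}\vrh^{k-1}\circ\bfXkm|D_t^\ALE\vuh^k|^2$ emerges and no unwanted boundary contributions survive. In particular one must be attentive that the boundary term from integration by parts in the convective term vanishes because $\vvh^k = \vuh^k-\vwh^k = 0$ on $\pd\Oh^k$ (the prescribed boundary condition $\vuh^k|_{\pd\Oh}=\vwh^k|_{\pd\Oh}$), and that the "extra" terms involving $\Div\vwh^k$ and $1-\mJkm$ (linked via \eqref{div_jacob}) cancel against each other after applying the discrete Reynolds transport theorem. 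Once the kinetic energy bookkeeping is done, the rest is a matter of summing the already-established identities; the right-hand side $\intOk{\vrh^k\bfh^k\cdot\vuh^k} + \intS{\gh^k\zh^k}$ is just carried over unchanged from the test in \eqref{SS_M}.
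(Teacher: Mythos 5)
Your proposal is correct and takes essentially the same route as the paper: test $\eqref{SS_D}$ with $-\tfrac12|\vuh^k|^2$, test $\eqref{SS_M}$ with $(\vuh^k,\zh^k)$, use the identity $a(a-b)=\tfrac12(a^2-b^2)+\tfrac12(a-b)^2$ for the kinetic and shell time-difference terms, cancel the two convective contributions by integration by parts using $\vvh^k|_{\partial\Oh}=0$, and replace the pressure term by the internal energy balance $\eqref{r1H}$. One small observation: your careful bookkeeping correctly keeps the weight $\vrh^{k-1}\circ\bfXkm\,\mJkm$ in the kinetic dissipation term (the $\mJkm$ comes from the change of variables back to $\Oh^{k-1}$), whereas the paper's displayed formula drops the $\mJkm$ factor — since both quantities are nonnegative this does not affect the stability conclusion, but your version is actually the more precise identity.
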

\begin{proof}
Setting $\varphih = - \frac{\left| \vuh^{k}\right|^2}{2}$ in \eqref{SS_D}, and $(\bfPsih, \psih) =( \vuh^{k},\zh^k)$  in \eqref{SS_M},  we have 
$\sum_{i=1}^2 I_i=0, \quad \text{and }  \sum_{i=3}^{9} I_i=0$, respectively, where 
\begin{equation*}
\begin{aligned}
& I_1 = -\intOk{ D_t \vrh^{k} \frac{\left| \vuh^{k}\right|^2}{2} } , \quad 
I_2= -\intOk{\Div \left( \vrh^{k} \vvh^{k}  \right) \frac{\left| \vuh^{k}\right|^2}{2} } , 
\quad
I_3 = \intOk{ D_t \left( \vrh^{k} \vuh^{k} \right) \cdot \vuh^{k} },  \quad 
\\& I_4 = \intOk{\Div \left(\vrh^{k} \vuh^{k} \vvh^{k} \right) \cdot \vuh^{k} },\quad
I_5 =- \intOk{ p(\vrh^{k}) \Div \vuh^{k} } , 
\quad
I_6 = 2\mu \intOk{ |\bfD( \vuh^{k})|^2} + \lambda \intOk{ |\Div \vuh^{k}|^2 },  \quad
\\&I_7=  \intOk{\vrh^k \bfh^{k}\cdot \vuh^{k}} + \intS{\gh^{k}  \zh^k} , 
\quad
I_{8}= \intS{ \frac{\zh^k- \zh^{k-1}}{\TS} \zh^k }, \quad
I_{9}= \alpha \intS{  \Delta \etah^{k} \Delta \zh^{k}  }   + \beta \intS{  \Grad \etah^{k} \cdot \Grad \zh^{k} } 
.
\end{aligned}
\end{equation*}
Now we proceed with the summation of all the $I_i$ terms for  $i=1,\ldots,12$. 

{\bf Term $I_1+I_3+I_{8}$.} Applying the equality $a(a-b) =\frac{a^2-b^2}{2}+\frac{(a-b)^2}{2}$ we get 
\begin{equation*}
\begin{aligned}
I_1+I_3 +I_{8} =& \frac{1}{\TS} \left( \intOk{ \frac12\vrh^{k} \left| \vuh^{k}\right|^2}   -  \intOkm{ \frac12\vrh^{k-1} \left| \vuh^{k-1}\right|^2}   \right)
+ \intS{ \delta_t \left( \frac{|\zh^{k} |^2}{2}  \right)  } 
\\ & 
+ \frac{\TS}{2}  \intOk{ \vrh^{k-1}\circ \bfXkm \left| D_t^\ALE \vuh^{k}  \right|^2  } 
+ \frac{\TS}{2} \intS{|\delta_t \zh^{k} |^2  } .
\end{aligned}
\end{equation*}

{\bf Term $I_2+I_4$.} For the convective terms, we have
\begin{equation*}
 I_2+I_4  = \intOk{ \left( -\Div ( \vrh^{k} \vvh^{k} ) \frac{\left| \vuh^{k}\right|^2}{2} +  \Div \left(\vrh^{k}\vuh ^{k} \otimes \vvh^{k} \right) \cdot \vuh^{k} \right) }
= 
 \intOk{  \left( \vrh^{k} \vvh^{k} \cdot \Grad \frac{\left| \vuh^{k}\right|^2}{2} - \vrh^{k}\vuh ^{k} \otimes \vvh^{k}  :\Grad \vuh^{k} \right) } = 0. 
\end{equation*}

{\bf Pressure term $I_{5} $. }
Recalling the discrete internal energy equation \eqref{r1H}, we can rewrite the pressure term as 
\begin{equation*}
I_5= -\intOk{  p(\vrh^{k}) \Div \vuh^{k} } =
\frac{ 1}{\TS} \left( \intOk{  \Hc(\vrh^k)}  - \intOkm{ \Hc(\vrh^{k-1}) }  \right)
+ \intOk{  \TS \mJkm   \frac{\Hc''(\xi)}{2} \left| D_t^\ALE \vrh^{k} \right|^2 }  .
\end{equation*}

{\bf Term $I_6+I_7$.} These terms don't change. 

{\bf Term $I_{9}$. } Applying again $a(a-b) =\frac{a^2-b^2}{2}+\frac{(a-b)^2}{2}$, we deduce 
\begin{align*}
I_{9} & =  \intS{ \frac12  \delta_t \left( \alpha |\Delta \etah^{k}|^2 + \beta |\Grad \etah^{k}|^2 \right)}
+ \intS{ \left( \frac{\TS\alpha}2 \left|\delta_t(\Delta \etah^{k})\right|^2 +\frac{\TS\beta}2 \left|\delta_t(\Grad \etah^{k})\right|^2 \right) }\\
& = \intS{ \frac12  \delta_t \left( \alpha |\Delta \etah^{k}|^2 + \beta |\Grad \etah^{k}|^2 \right)}
+ \frac{\TS}2 \intS{ \left( \alpha \left| \Delta \zh^{k}\right|^2 + \beta \left| \Grad \zh^{k}\right|^2 \right) }
\end{align*}

Collecting all the above terms, we finish the proof, i.e.,
\begin{equation*} 
\begin{aligned}
& \frac{1}{\TS} \left(  \intOk{  E_f^{k} }   - \intOkm{ E_f^{k-1} }  \right) +  \intS{  \delta_t E_s^k }
+ \frac{\TS}2  \intS{ \left( |\delta_t \zh^{k} |^2+ \alpha \left|\Delta \zh^{k} \right|^2 + \beta \left|\Grad \zh^{k} \right|^2  \right) }
\\ & 
+ \intOk{\left( 2\mu |\bfD( \vuh^{k})|^2  +  \lambda  |\Div \vuh^{k}|^2 \right)}
+ \intOk{  \TS \mJkm   \frac{\Hc''(\xi)}{2} \left| D_t^\ALE \vrh^{k} \right|^2 }  
 + \intOk{\frac{\TS}{2} \vrh^{k-1} \circ \bfXkm
\left| D_t^\ALE  \vuh^{k} \right|^2  }
\\ &  =  \intOk{\vrh^k \bfh^{k}\cdot \vuh^{k}} + \intS{\gh^{k} \zh^{k}}.
\end{aligned}
\end{equation*}
\end{proof}

\subsection{Some a-priori estimates}
Let us recall that all unknowns including the domain and the test functions are piecewise constant in time, see \eqref{Tspace}. 
We define $\overline{\etah}(t,\bfr)$ as the affine linear interpolant of $\etah$ meaning that
$\overline{\etah}\in C^0(0,T;\Sigma)$, such that $\overline{\etah}(t^k,\bfr)=\etah^k(\bfr)$ and $\partial_t\overline{\etah}(t,\bfr)=\zh^k(\bfr)$ for $t\in I^k= [t^k,t^{k+1})$.

With a little abuse of notation we use $[0,T]\times \Oh(\cdot)=\bigcup_{k=1}^{N_T}(t^{k-1},t^k]\times \Oh(t^k)$. Accordingly we define for $s\in [0,\infty)$, $q\in [1,\infty]$ and $p\in [1,\infty)$
\begin{equation*}
\norm{f_\tau}_{L^p(0,T;W^{s,q}(\Oh(\cdot))}:=\bigg(\sum_{l=0}^{N_T}\tau\norm{f_\tau^l}_{W^{s,q}(\Oh(t_l))}^p\bigg)^\frac{1}{p}, 
\quad 
\norm{f_\tau}_{L^\infty(0,T;W^{s,q}(\Oh(\cdot))}:=\max_{k}\norm{f_\tau^l}_{W^{s,q}(\Oh(t_l))}.
\end{equation*}
Note that the expressions above bound the respective norms for both the piecewise constant functions in time as well as the piecewise affine linear functions in time.

Then the energy estimate~Theorem~\ref{thm_s1} implies the following a-priori estimates (for the piecewise constant in time functions $\etah,\vrh,\vuh$) that are uniform in $\tau$:
\begin{equation*}
\begin{split}
\norm{ \vrh }_{ L^{\infty}(0,T; L^\gamma(\Oh(\cdot)))} \leq c, \quad \norm{ \vrh |\vuh|^2}_{ L^{\infty}(0,T; L^1(\Oh(\cdot)))} \leq c, \\
\norm{\vuh }_{  L^2(0,T; L^6(\Oh(\cdot)))}\leq c, \quad \norm{\Gradh \vuh }_{  L^2(0,T; L^2(\Oh(\cdot)))}\leq c,  \quad \norm{\Divh \vuh }_{  L^2(0,T; L^2(\Oh(\cdot)))}\leq c, \\
\norm{\zh }_{  L^\infty(0,T; L^2(\Sigma))}\leq c, \quad \alpha \norm{\Lap  \etah }_{  L^\infty(0,T; L^2(\Sigma))} \leq c, \quad \beta \norm{\Gradh  \etah }_{  L^\infty(0,T; L^2(\Sigma))}\leq c, 
 \end{split}
\end{equation*}
 where $c$ depends on the external force $\bff$ and $g$ as well as the initial data.
 Furthermore,  for all $1\leq \beta<\gamma$ such that  $\frac{\beta}{\gamma}+\frac{\beta}{a}=1$ for some $a\in(1,\infty)$ we find
\begin{equation}    \label{eq:aprir}
    \int_{\Ohk}(\abs{\vrh^k}(\abs{\vuh^k}+1))^\beta\, dx=\int_{\Oh^k} \abs{\vrh^k}^{\beta}(\abs{\vuh^k}+1)^\beta\, dx
\leq \norm{\vrh^k}_{L^\gamma(\Oh^k)}^{\beta}\norm{\abs{\vuh^k}+1}_{L^a(\Oh^k)}^{\beta}.
\end{equation}
Please observe, that in case $d=2$ for every $\gamma>1$ one finds an $a$ such that the right hand side will be bounded. In case $d=3$ we are restricted to $\gamma>\frac{6}{5}$.
Indeed, in this case we find by Jensen's inequality that for $\beta\in (1,2]$
\begin{equation}
    \label{eq:aprir2}
    \begin{aligned}
    \sum_k\tau^\frac{\beta}{2}\int_{\Ohk}(\abs{\vrh^k}(\abs{\vuh^k}+1))^\beta\, dx
&\leq \norm{\vrh}_{L^\infty(0,T;L^\gamma(\Oh)}^{\beta}\norm{\abs{\vuh^k}+1}_{L^2(0,T;L^a(\Oh^k))}^{\beta}.
\end{aligned}
\end{equation}

In order to prove the consistency of the above scheme we need some additional a-priori estimates. 
\begin{Lemma}
\label{lem:apri}
For all $s\in [0,\frac12)$ and all $q\in [1,4)$ there is a constant independent of $\tau$ such that
\begin{align*}
& \max_{k}\norm{\delta_t\etah^k}_{L^2(\Sigma)}+\sum_{l=1}^{N_T} \tau \Big(\norm{\delta_t\etah^l}_{W^{s,2}(\Sigma)}^2+\norm{\delta_t\etah^l}_{L^{q}(\Sigma)}^2\Big)\leq C,
\\& \max_k\norm{\vwh^k}_{L^2(\Oh(t^k))}+\sum_{l=1}^{N_T} \tau \Big(\norm{\vwh^l}_{W^{s,2}(\Oh(t^l))}^2+\norm{\vwh^l}_{L^q(\Oh(t^l))}^2\Big)\leq C.
\end{align*}
The constant $C$ depends on the initial values and the bounds of the energy estimates alone.
Moreover, for all $\theta\in [0,\frac13)$ there exists a constant $C$ depending on the energy estimates and $\theta$, such that
\begin{align}
\label{eq:cont}
    \max_{k}\norm{\etah^k(x)-\etah^{k-1}(x)}_{L^\infty(\Sigma)}\leq C\tau^\theta.
\end{align}
\end{Lemma}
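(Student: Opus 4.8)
The plan is to extract all bounds from the energy estimate of Theorem~\ref{thm_s1} together with the interpolation/embedding properties of the reference domain $\Sigma$ and the (uniformly comparable) physical domains $\Oh(t^k)$. First I would treat the shell quantities. The energy estimate gives $\max_k\norm{\zh^k}_{L^2(\Sigma)}=\max_k\norm{\delta_t\etah^k}_{L^2(\Sigma)}\le C$ directly, as well as $\max_k\big(\alpha\norm{\Lap\etah^k}_{L^2(\Sigma)}+\beta\norm{\Grad\etah^k}_{L^2(\Sigma)}\big)\le C$, hence (using $\etah^k|_{\pd\Sigma}=0$, $\Grad\etah^k|_{\pd\Sigma}=0$ and elliptic regularity / Poincaré) a uniform bound on $\etah^k$ in $W^{2,2}_0(\Sigma)$. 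For the dissipative sums I would use the term $\frac{\tau}2\intS{(|\delta_t\zh^k|^2+\alpha|\Delta\zh^k|^2+\beta|\Grad\zh^k|^2)}$ on the left of \eqref{energy_stability}: summing over $k$ telescopes the energy and leaves $\sum_l\tau\norm{\delta_t\zh^l}_{W^{2,2}(\Sigma)}^2\le C$ when $\alpha>0$ (again via elliptic regularity for $\Delta$ with clamped boundary conditions). But I only need $\delta_t\etah^l=\zh^l$ in a weaker norm: from $\max_l\norm{\zh^l}_{L^2}\le C$ and, interpolating with the $L^\infty(W^{2,2})$ bound on $\etah$, I get $\sum_l\tau\norm{\zh^l}_{W^{s,2}(\Sigma)}^2\le C$ for $s<1$ trivially (indeed even uniformly in $l$), which covers $s<\tfrac12$. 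The bound $\sum_l\tau\norm{\zh^l}_{L^q(\Sigma)}^2\le C$ for $q<4$ follows from $\max_l\norm{\zh^l}_{L^2(\Sigma)}\le C$ via the Sobolev embedding $W^{s,2}(\Sigma)\hookrightarrow L^q(\Sigma)$ valid for $(d-1)$-dimensional $\Sigma$ when $q<\tfrac{2(d-1)}{(d-1)-2s}$; for $d=3$ and $s\to\tfrac12$ this limiting exponent is $4$, giving exactly $q<4$ (for $d=2$ there is no restriction). The same arguments transfer verbatim to $\vwh^k$: by the explicit formula \eqref{mesh_velocity}, $\vwh^k=\big(\mathbf{0}_{d-1},\,\delta_t\etah^k\,\tfrac{x_d}{H+\etah^k}\big)^T$, so $\norm{\vwh^k}_{W^{s,2}(\Oh(t^k))}$ and $\norm{\vwh^k}_{L^q(\Oh(t^k))}$ are controlled by $\norm{\delta_t\etah^k}_{W^{s,2}(\Sigma)}$ (resp.\ $L^q(\Sigma)$) times a factor depending only on $\norm{\etah^k}_{W^{2,2}}$ and the lower bound $\delta_0$ on $H+\etah^k$; here I would use that $x_d/(H+\etah^k)$ is a bounded multiplier with controlled fractional Sobolev norm because $\etah^k$ is bounded in $W^{2,2}_0(\Sigma)\hookrightarrow C^{0,\alpha}$, and integrate out the one extra $x_d$-variable. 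Then sum in $k$ as before.

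For the Hölder-in-time estimate \eqref{eq:cont} I would argue as follows. Write $\etah^k-\etah^{k-1}=\tau\,\zh^k$ for a single step, but for the $L^\infty$ bound it is cleaner to interpolate: $\etah^k-\etah^{k-1}$ is bounded in $W^{2,2}_0(\Sigma)$ by $2\sup_l\norm{\etah^l}_{W^{2,2}}\le C$ (from the energy estimate), and in $L^2(\Sigma)$ by $\tau\norm{\zh^k}_{L^2}\le C\tau$. Gagliardo--Nirenberg interpolation on the $(d-1)$-dimensional domain $\Sigma$ gives, for $d\le 3$ so that $(d-1)\le 2$,
\[
\norm{\etah^k-\etah^{k-1}}_{L^\infty(\Sigma)}\le C\norm{\etah^k-\etah^{k-1}}_{W^{2,2}(\Sigma)}^{1-\vartheta}\norm{\etah^k-\etah^{k-1}}_{L^2(\Sigma)}^{\vartheta}\le C\tau^{\vartheta}
\]
for any $\vartheta<1-\tfrac{d-1}{4}$; for $d=3$ this gives $\vartheta<\tfrac12$, in particular every $\theta<\tfrac13$ is admissible (and for $d=2$ one may take $\vartheta$ up to $\tfrac34$). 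This proves \eqref{eq:cont} with room to spare.

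The main obstacle I expect is not the shell part — that is pure interpolation and telescoping — but the transfer of the $\vwh^k$ estimates to the moving physical domains: one must be careful that the constants relating $\norm{\cdot}_{W^{s,2}(\Oh(t^k))}$ to the reference-domain norms are uniform in $k$ and in $\tau$, which hinges on the uniform bilipschitz bounds for $\ALEhk$ coming from $\delta_0/H_{\max}\le\mJ_i^j\le H_{\max}/\delta_0$ and the uniform $W^{2,2}$ bound on $\etah^k$; and that the composition/multiplication estimate for the fractional Sobolev norm of $x_d/(H+\etah^k)$ is legitimate (this uses $W^{2,2}_0(\Sigma)\hookrightarrow C^0$ in dimensions $d-1\le 2$, i.e.\ exactly $d\le 3$). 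Once these multiplier and change-of-variables facts are in place, all four displayed bounds follow by the telescoping-plus-interpolation scheme above.
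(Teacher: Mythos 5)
Your treatment of the H\"older-in-time estimate \eqref{eq:cont} is fine and in fact stronger than the paper's: you use the uniform $L^\infty_t W^{2,2}_x$ bound on $\etah$ together with $\norm{\etah^k-\etah^{k-1}}_{L^2}\le\tau\norm{\zh^k}_{L^2}$ and Gagliardo--Nirenberg on the $(d-1)$-dimensional domain $\Sigma$ to obtain $\norm{\etah^k-\etah^{k-1}}_{L^\infty}\lesssim\tau^{1/2}$ when $d=3$, whereas the paper's mean-value-integral argument only yields $\tau^\theta$ for $\theta<1/3$. Both imply the stated bound.

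However, your proof of the first two displayed estimates has a genuine gap. You claim that from $\max_l\norm{\zh^l}_{L^2}\le C$ and ``interpolating with the $L^\infty(W^{2,2})$ bound on $\etah$'' one gets $\norm{\zh^l}_{W^{s,2}(\Sigma)}\le C$ uniformly in $l$ and $\tau$. This interpolation fails: the available $W^{2,2}$ bound is on $\etah^l$, not on $\zh^l=(\etah^l-\etah^{l-1})/\tau$. Interpolating $\etah^l-\etah^{l-1}$ between $W^{2,2}$ (bounded) and $L^2$ (of size $\tau$) gives $\norm{\etah^l-\etah^{l-1}}_{W^{s,2}}\lesssim\tau^{1-s/2}$, hence $\norm{\zh^l}_{W^{s,2}}\lesssim\tau^{-s/2}$, and $\sum_l\tau\norm{\zh^l}_{W^{s,2}}^2\lesssim\tau^{-s}$, which is not uniform in $\tau$ for any $s>0$. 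The subsequent $L^q$ bound for $q>2$ relies on this $W^{s,2}$ bound via Sobolev embedding, so it collapses too, and the $\vwh$ bounds inherit the same defect since they are built from $\delta_t\etah$. The paper closes this gap by a different mechanism: $\zh^l$ is the \emph{trace} on $\Sigma$ of $\vuh^l\in W^{1,2}(\Oh^l)$, and the trace theorem (on the reference domain, with the change-of-variables factor controlled by $\norm{\etah^l}_{W^{2,2}}$) gives
\[
\sum_l\tau\norm{\zh^l}_{W^{s,2}(\Sigma)}^2\lesssim\sum_l\tau\norm{\hvuh^l}_{W^{s+1/2,2}(\Ohref)}^2\lesssim\norm{\etah}_{L^\infty_t W^{2,2}}^2\sum_l\tau\norm{\vuh^l}_{W^{1,2}(\Oh(t^l))}^2,
\]
which is bounded by the viscous dissipation in the energy estimate (via Korn and Poincar\'e). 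In other words, the spatial regularity of $\zh$ must be imported from the fluid velocity through the kinematic coupling condition, not from interpolating the shell displacement against a divergent difference quotient.
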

\begin{proof}
The energy estimate (Theorem~\ref{thm_s1}) implies that $\norm{z_\tau^k}_{L^2(\Sigma)}$ is uniformly bounded, which implies the same bound for $\partial_t\etah^k$ by the definition of $\zh$. Moreover, since $\zh^k$ is the trace of $\vuh^k$ which is in $W^{1,2}(\Oh^k)$, we find by the trace-theorem 
(see the related estimate in~\cite[Corollary 2.9]{Daniel}) that
\begin{align*}
\sum_{l=1}^{N_T} \tau \norm{\delta_t\etah^l}_{W^{s,2}(\Sigma)}^2&=
\sum_{l=1}^{N_T} \tau \norm{z_\tau^k}_{W^{s,2}(\Sigma)}^2\leq c\sum_{l=1}^{N_T} \tau \norm{\hvuh^l}_{W^{s+\frac{1}{2},2}(\Ohref)}^2
\leq c\sum_{l=1}^{N_T} \tau \norm{\vuh^l}_{W^{1,2}(\Oh(t^l))}^2\norm{\etah^l}_{W^{2,2}(\Sigma)}^2
\end{align*}
which can be bounded by the energy as well. Due to the fact that for any $q\in [1,4)$ there is an $s<2$ such that $W^{s,2}\hookrightarrow L^q$ the first inequality is completed. The second inequality follows by the very definition of $\vwh$. We extend $\etah^k, \etah^{k-1}$ by zero to $\mathbb{R}^2$ and take $r<\tau$. 
We use the notation of $\dashint_{B_r(x)}\etah^k(y)\,dy=\frac{1}{r^2\pi}\int_{B_r(x)}\etah^k(y)\,dy$ for the mean value integral. Then by Sobolev embedding, we find that $\etah^k\in C^\alpha(\Sigma)$, for all $\alpha<1$, and hence that
\begin{align*}
\abs{\etah^k(\bfr)-\etah^{k-1}(\bfr)}&\leq \Big|\etah^k(\bfr)-\dashint_{B_r(\bfr)}\etah^k(\bfy)\,d\bfy\Big|+\Big|\dashint_{B_r(\bfr)}\etah^k(\bfy)-\etah^{k-1}(\bfy)\, d\bfy\Big|+ \Big|\etah^{k-1}(\bfr)-\dashint_{B_r(\bfr)}\etah^{k-1}(\bfy)\,d\bfy\Big|
\\
&\leq Cr^{\alpha} +\Big|\dashint_{B_r(\bfr)}\etah^k(\bfy)-\etah^{k-1}(\bfy)\,d\bfy\Big|
\leq  Cr^{\alpha} + \tau \dashint_{B_r(\bfr)}\abs{\zh^k(\bfy)}\, d\bfy \leq Cr^{\alpha} + C\frac{\tau}{r^2}. 
\end{align*} 
Now the result follows by choosing $r=\tau^\frac{1}{\alpha+2}$.
\end{proof}
The regularity can be used to guarantee a minimal existence interval in time in which the shell is not touching the bottom of the fluid domain. At first we have the following observation which is a direct consequence of \eqref{eq:cont} above. 
\begin{Corollary}[Inductive prolongation principle]\label{cor:maxtime}
Let $\tau^\theta\leq \frac{\delta_0}{C}$ and $\delta_1\geq 2\delta_0$. Then, if for some $k\in \{1,...,N_T\}$ we find that
$\inf_\sigma \etah^k(\bfr)\geq \delta_1-H$, the $\etah^{k+1}$ satisfies
$\inf_\sigma \etah^k(\bfr)\geq \delta_1-\delta_0-H$.
\end{Corollary}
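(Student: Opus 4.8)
The plan is to read the statement directly off the uniform modulus-of-continuity bound \eqref{eq:cont}, which is what Corollary~\ref{cor:maxtime} is advertised to follow from. Write $C$ for the constant appearing in \eqref{eq:cont}. Fix $k$ as in the hypothesis; for every $\bfr\in\Sigma$ I would estimate
\[
\etah^{k+1}(\bfr)=\etah^{k}(\bfr)+\big(\etah^{k+1}(\bfr)-\etah^{k}(\bfr)\big)\geq \inf_{\Sigma}\etah^{k}-\norm{\etah^{k+1}-\etah^{k}}_{L^\infty(\Sigma)}\geq (\delta_1-H)-C\TS^{\theta},
\]
and then use the smallness assumption $\TS^{\theta}\leq \delta_0/C$ to bound the right-hand side from below by $\delta_1-\delta_0-H$. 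Taking the infimum over $\bfr\in\Sigma$ yields $\inf_{\Sigma}\etah^{k+1}\geq \delta_1-\delta_0-H$, which is exactly the claim. The hypothesis $\delta_1\geq 2\delta_0$ is used only to make the conclusion useful downstream: it ensures $\inf_{\Sigma}\etah^{k+1}\geq \delta_1-\delta_0-H\geq \delta_0-H$, so that the fluid domain $\Oh(t^{k+1})$ remains non-degenerate with a gap of at least $\delta_0$ to the bottom $x_d=0$.

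The only genuinely delicate point --- hence the main obstacle --- is the logical order in which \eqref{eq:cont} may legitimately be invoked. That estimate was derived from the a-priori bounds following Theorem~\ref{thm_s1} together with Lemma~\ref{lem:apri}, and those bounds implicitly rely on the discrete domains $\Oh(t^{j})$, $j\le k$, being non-degenerate, i.e.\ $\etah^{j}\geq \delta_0-H$ on $\Sigma$ for $j\le k$; this is precisely what keeps the Jacobians $\mJ_i^j$ bounded above and below (cf.\ the discussion after \eqref{div_jacob}), so that the energy --- and with it the constant $C$ in \eqref{eq:cont} --- depends only on the data. I would therefore present the argument as the inductive step of a bootstrap: assuming $\etah^{j}(\bfr)\geq \delta_0-H$ for all $j\le k$, the constant $C$ in \eqref{eq:cont} is a fixed number, the display above propagates the stronger bound $\inf_{\Sigma}\etah^{k}\geq \delta_1-H$ to step $k+1$ with a loss of at most $\delta_0$ in the gap to $-H$, and the resulting $\etah^{k+1}\geq \delta_0-H$ legitimizes the next step. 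Beyond this consistency check on the constants there is no hard estimate to perform.

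Finally, I would remark that the corollary is stated conditionally on purpose, so that it can be iterated: starting from $\etah^{0}\equiv 0$ (which satisfies $\inf_{\Sigma}\etah^0=0\geq \delta_1-H$ as soon as $\delta_1\leq H$, e.g.\ $\delta_1=2\delta_0<H$ since $\delta_0\in(0,H/2)$), one applies the step repeatedly for as many time levels as the smallness $\TS^{\theta}\leq \delta_0/C$ allows, which is how the minimal existence interval in Lemma~\ref{lem:Tsd} is obtained. That iteration is not part of the present statement, so I would not carry it out here.
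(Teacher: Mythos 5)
Your proof is correct and takes the same approach as the paper, which offers no explicit proof and simply labels Corollary~\ref{cor:maxtime} a direct consequence of \eqref{eq:cont}; your three-line chain of inequalities is precisely that consequence, and your reading of the role of $\delta_1\geq 2\delta_0$ (keeping the gap to the bottom at least $\delta_0$ so the domain stays non-degenerate) matches the intended use in Lemma~\ref{lem:Tsd}. Your added remark about the bootstrap ordering of \eqref{eq:cont} and the non-degeneracy of the domain goes beyond what the paper records but is a legitimate and worthwhile consistency check; since the corollary is stated conditionally you are right not to carry out the iteration here.
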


Moreover,  \eqref{eq:cont} implies the following lemma:
\begin{Lemma}
\label{lem:Tsd}
For every $\delta_0\in (0,H/2)$ there exists a $T_0$ just depending on the bounds of the energy inequality and $H$, such that 
$ \inf_{[0,T_0]\time \Sigma}\eta(t,\bfr)\geq \delta_0-H$.
\end{Lemma}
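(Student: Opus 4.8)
\emph{Plan.} The statement concerns only the shell deformation, so the fluid enters merely through the energy bounds. The plan is to reduce it to a $\tau$-uniform modulus of continuity in time for $\eta$: since $\eta(0,\cdot)=\eta_0\equiv 0$, it suffices to prove that the affine-in-time interpolant $\overline{\etah}$ (hence any limit $\eta$) satisfies $\norm{\overline{\etah}(t)-\overline{\etah}(0)}_{C^0(\bar\Sigma)}\leq C\,t^{\theta_0}$ on $[0,T]$ for some fixed $\theta_0>0$ with $C$ controlled by the energy constants; on a short interval $[0,T_0]$ the deformation then stays close to $0$, so $H+\eta\geq\delta_0$ there. The length $T_0$ will come out depending only on the energy bounds and on $H$, and — via $H-\delta_0>H/2$ — not on $\delta_0$ itself. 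Note that the inductive prolongation principle (Corollary~\ref{cor:maxtime}) controls only one step and, iterated naively, would cover only a time of order $\tau$; a genuinely cumulative estimate is needed.

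First I would collect the uniform bounds already at hand. By Theorem~\ref{thm_s1} and the a-priori estimates following it, $\norm{\zh}_{L^\infty(0,T;L^2(\Sigma))}\leq c$ and $\alpha\norm{\Delta\etah}_{L^\infty(0,T;L^2(\Sigma))}\leq c$; since $\etah^k\in W^{2,2}_0(\Sigma)$ and $\alpha>0$ is fixed, Poincaré's inequality on $W^{2,2}_0(\Sigma)$ gives $\norm{\etah^k}_{W^{2,2}(\Sigma)}\leq c$ for every $k$. Passing to $\overline{\etah}$ (a convex combination of neighbouring $\etah^k$) one has, uniformly in $\tau$,
\[
\overline{\etah}\in L^\infty(0,T;W^{2,2}_0(\Sigma))\cap W^{1,\infty}(0,T;L^2(\Sigma)).
\]
Interpolating in time, for $t,t'\in[0,T]$ one has $\norm{\overline{\etah}(t)-\overline{\etah}(t')}_{L^2(\Sigma)}\leq c\abs{t-t'}$ and $\norm{\overline{\etah}(t)-\overline{\etah}(t')}_{W^{2,2}(\Sigma)}\leq 2c$, hence
\[
\norm{\overline{\etah}(t)-\overline{\etah}(t')}_{W^{s,2}(\Sigma)}\leq C\,\abs{t-t'}^{1-s/2}\qquad (0<s<2).
\]
Choosing $s$ slightly above $(d-1)/2$, admissible since $d-1\leq 2$, the Sobolev embedding $W^{s,2}(\Sigma)\hookrightarrow C^0(\bar\Sigma)$ on the fixed domain $\Sigma$ upgrades this to $\norm{\overline{\etah}(t)-\overline{\etah}(t')}_{C^0(\bar\Sigma)}\leq C\,\abs{t-t'}^{\theta_0}$ with $\theta_0=1-s/2>0$ depending only on $d$ and $C$ only on the energy constants and the fixed geometry. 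Equivalently one may skip the interpolation and mimic the averaging argument in the proof of~\eqref{eq:cont} applied to $\etah^k-\etah^0=\sum_{j\leq k}\tau\,\zh^j$: this cumulative increment is bounded in $L^\infty(0,T;L^2(\Sigma))$, so optimising the averaging radius against $t^k$ again reproduces the bound $C\,(t^k)^{\theta_0}$.

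Finally I would fix $T_0$. Using $\overline{\etah}(0,\cdot)=0$ and that $\overline{\etah}(t)$ at intermediate times lies between $\etah^{k-1}$ and $\etah^k$, the previous step gives $\eta(t,\bfr)\geq -C\,t^{\theta_0}$ for all $t\in[0,T]$ and $\bfr\in\Sigma$, uniformly in $\tau$ and therefore for every limit $\eta$ as well. Setting $T_0:=\min\{T,(H/(2C))^{1/\theta_0}\}$ yields $\inf_{[0,T_0]\times\Sigma}\eta\geq -C\,T_0^{\theta_0}\geq -\frac{H}{2}>\delta_0-H$ because $\delta_0<H/2$, which is the claim, with $T_0$ depending only on the energy constants and $H$. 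The step I expect to be the main obstacle is precisely this $\tau$-uniformity of the Hölder-in-time bound into $C^0(\bar\Sigma)$: the one-step estimate~\eqref{eq:cont} must \emph{not} be summed term by term — that would cost a diverging factor $\tau^{\theta-1}$ — so one has to exploit that the total displacement over $[0,t]$ is controlled in the \emph{energy} norm of $\zh$ and turn this into a pointwise bound via the $W^{2,2}$-smoothing of $\etah$. In the present semi-discrete setting existence and boundedness of the solutions are assumed, so there is no circularity between the energy bound and the non-degeneracy of the domain; that circularity, and the extra care it requires, appears only in the fully discrete analogue.
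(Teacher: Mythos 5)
Your plan is correct and gives a complete proof, but it takes a genuinely different route from what the paper writes --- and your route is the one that actually closes. The paper's proof iterates the one-step estimate~\eqref{eq:cont}, $\max_k\norm{\etah^k-\etah^{k-1}}_{L^\infty(\Sigma)}\leq C\tau^\theta$, over the $N\approx T_0/\tau$ steps filling $[0,T_0]$, which is exactly the pitfall you anticipate. The cumulated bound $\sum_{i\leq N}C\tau^\theta\approx CT_0\tau^{\theta-1}$ is not uniform in $\tau$ when $\theta<1$; in the paper's displayed chain the final inequality reduces, after cancelling common factors, to $(N+1)N^{-\theta}\leq1$, which is false for every $\theta\in(0,1)$ and $N\geq1$, so the argument as written does not close for a $\tau$-independent $T_0$. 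Your fix --- a \emph{cumulative} Hölder-in-time bound $\norm{\overline{\etah}(t)-\overline{\etah}(0)}_{C^0(\bar\Sigma)}\leq C\,t^{\theta_0}$ with $C$ and $\theta_0>0$ depending only on the energy bounds --- is exactly what is needed. Both of your routes yield it with uniform constants: the interpolation (or Gagliardo--Nirenberg) chain between the $L^2$ Lipschitz-in-time control of $\overline{\etah}$ and the uniform $W^{2,2}$ bound, mapped through $W^{s,2}(\Sigma)\hookrightarrow C^0(\bar\Sigma)$ for $(d-1)/2<s<2$, is clean because $\Sigma$ is a fixed Lipschitz domain; your second route --- rerunning the mean-value averaging argument from the proof of~\eqref{eq:cont} on the cumulative increment $\etah^k-\etah^0=\sum_{j\leq k}\tau\zh^j$, optimising the radius $r$ against $t^k$ rather than against a single step $\tau$ --- stays closest to the paper's own mechanism and is presumably what the authors intended. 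With either bound, $T_0:=\min\{T,(H/(2C))^{1/\theta_0}\}$ delivers the statement with $T_0$ independent of $\delta_0$, as you observe.
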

\begin{proof}
The result essentially follows from \eqref{eq:cont} from which we import the constants $C$ and $\theta$. Let $(T_0+\tau)^\theta\leq \frac{H-\delta_0}{C}$. Then we choose $N$ such that $(N-1)\tau<T_0\leq N\tau$, then for $k\in \{1,...,N\}$ we find by the fact that $\etah^0\equiv 0$, by \eqref{eq:cont} and by Jensen's inequality (for the concave functions using $\theta\in (0,1]$) that
\begin{equation*}
\etah^k(\bfr)=\etah^k(\bfr)-\eta_\tau^0(\bfr)\geq -\norm{\etah^k-\eta_\tau^0}_L^\infty(\Sigma)
\geq -C\sum_{i=0}^N\tau^\theta\geq -(T_0+\tau)^\theta C\sum_{i=0}^N\Big(\frac{\tau}{N\tau}\Big)^\theta\geq \delta_0-H, \; \forall \; \bfr\in \Sigma. 
\end{equation*}
\end{proof}

\subsection{Consistency}
\label{ssec:consistency}
In this subsection, we aim to show the consistency of the scheme, meaning the if the numerical solution converges, then it satisfies the weak formulation \eqref{def_ws} in the limit of $\TS \to 0$.

Usually, for that one takes a fixed test function and shows that the error produced by the discretization vanishes in the limit. Due to the fact that the {\em domain of the test function is a part of the solution} we have to approximate the test function space as well. We will do this in the following.
 We recall that $\overline{\eta}_\tau:[0,T]\times \Sigma \to [\delta_0-H,\infty)$ is defined as the affine linear function in time which satisfies $\overline{\eta}_\tau(k \tau)=\etah^k$ for all $k$. 

Now, the a-priori estimates imply the following lemma:
\begin{Lemma}
For any $\alpha \in [0,\frac13)$ and any of the above approximation sequences there exists a sub-sequence, $\{\overline{\eta}_{\tau_j}\}_{j\in \mathbb{N}}\in C^\alpha([0,T]\times \Sigma)$ and a $\eta\in \in C^\alpha([0,T]\times \Sigma)$, such that  
$
\overline{\eta}_{\tau_j}\to \eta\text{ with }j\to \infty
$ 
uniformly in $C^\alpha([0,T]\times \Sigma)$.
\end{Lemma}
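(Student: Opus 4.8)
The plan is to obtain the statement from the Arzel\`a--Ascoli theorem together with the compact embedding $C^{\alpha'}([0,T]\times\Sigma)\hookrightarrow\hookrightarrow C^{\alpha}([0,T]\times\Sigma)$, valid for any $0\le\alpha<\alpha'$ because $[0,T]\times\Sigma$ is compact. It therefore suffices to exhibit some $\alpha'\in(\alpha,\tfrac13)$ and a constant independent of $\tau$ with $\norm{\overline{\eta}_\tau}_{C^{\alpha'}([0,T]\times\Sigma)}\le C$; the limit of the extracted subsequence then lies in $C^{\alpha'}\subset C^\alpha$ by passing to the limit in that bound, which in particular gives $\eta\in C^\alpha([0,T]\times\Sigma)$.

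First I would record uniform \emph{spatial} regularity. Since $\alpha>0$, the energy estimate of Theorem~\ref{thm_s1}, summed in $k$, gives $\sup_k\norm{\Lap\etah^k}_{L^2(\Sigma)}\le C$; because $\etah^k\in\Wh(\Sigma)\subset W^{2,2}_0(\Sigma)$, the identity $\norm{\nabla^2 f}_{L^2(\Sigma)}=\norm{\Lap f}_{L^2(\Sigma)}$ on $W^{2,2}_0(\Sigma)$ and Poincar\'e's inequality yield $\sup_k\norm{\etah^k}_{W^{2,2}(\Sigma)}\le C$. The interpolant $\overline{\eta}_\tau$ is, on each time interval, a convex combination of two consecutive $\etah^k$, hence bounded in $L^\infty(0,T;W^{2,2}(\Sigma))$ uniformly in $\tau$, and by the Sobolev embedding $W^{2,2}(\Sigma)\hookrightarrow C^{0,\beta}(\Sigma)$ (available for every $\beta<1$ since $\dim\Sigma\le2$) also in $L^\infty(0,T;C^{0,\beta}(\Sigma))$ for each $\beta<1$. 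For uniform \emph{temporal} regularity, on each $I^k$ one has $\partial_t\overline{\eta}_\tau=\zh^k$ and the energy estimate gives $\norm{\zh}_{L^\infty(0,T;L^2(\Sigma))}\le C$, so $\overline{\eta}_\tau$ is uniformly Lipschitz as a map $[0,T]\to L^2(\Sigma)$.

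Next I would interpolate these two bounds. Applying a Gagliardo--Nirenberg inequality to $f=\overline{\eta}_\tau(t)-\overline{\eta}_\tau(s)$ (the space $C^{0,\beta}(\Sigma)$ lying between $L^2(\Sigma)$ and $W^{2,2}(\Sigma)$, and scaling like $W^{1+\beta,2}$ in dimension $\le 2$) gives some $\mu=\mu(\beta,\dim\Sigma)>0$, with $\mu=\tfrac{1-\beta}{2}$ when $d=3$ and strictly larger when $d=2$, such that $\norm{f}_{C^{0,\beta}(\Sigma)}\le C\norm{f}_{W^{2,2}(\Sigma)}^{1-\mu}\norm{f}_{L^2(\Sigma)}^{\mu}$. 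Inserting the bounds above gives $\norm{\overline{\eta}_\tau(t)-\overline{\eta}_\tau(s)}_{C^{0,\beta}(\Sigma)}\le C\abs{t-s}^{\mu}$, so $\overline{\eta}_\tau$ is bounded, uniformly in $\tau$, in $C^{0,\mu}([0,T];C^{0,\beta}(\Sigma))$ and hence in $C^{\beta'}([0,T]\times\Sigma)$ with $\beta'=\min\{\beta,\mu\}$. In the critical case $d=3$, choosing $\beta$ close to $\tfrac13$ makes $\beta'=\min\{\beta,\tfrac{1-\beta}{2}\}$ lie in $(\alpha,\tfrac13)$ — this is possible precisely because $\alpha<\tfrac13$, which explains the threshold — and then $\alpha'=\beta'$ is the sought uniform bound. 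The Arzel\`a--Ascoli theorem extracts a subsequence convergent in $C^0([0,T]\times\Sigma)$, and the compact embedding $C^{\alpha'}\hookrightarrow\hookrightarrow C^\alpha$ promotes the convergence to $C^\alpha([0,T]\times\Sigma)$.

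I expect the interpolation to be the main point. On its own, neither the $L^\infty(0,T;W^{2,2})$ bound nor the single--step estimate~\eqref{eq:cont} controls the temporal modulus of continuity of $\overline{\eta}_\tau$ over macroscopic intervals uniformly in $\tau$: summing~\eqref{eq:cont} over the $\sim\abs{t-s}/\tau$ intervals between $s$ and $t$ merely yields $C\abs{t-s}\tau^{\theta-1}$, which blows up as $\tau\to 0$. It is the combination of the spatial $W^{2,2}$ bound with the $L^2$--in--time Lipschitz bound that makes the argument work, the balance $\beta=\tfrac{1-\beta}{2}$ pinning the critical joint H\"older exponent at $\tfrac13$, in agreement with the hypothesis $\alpha<\tfrac13$.
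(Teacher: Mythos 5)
Your proof is correct and it arrives at the same $\tfrac13$ threshold, but it takes a genuinely different route from the paper. The paper's proof is very terse: it cites the Sobolev embedding $W^{2,2}(\Sigma)\hookrightarrow C^{\alpha}(\Sigma)$ for spatial regularity and then says ``Combining that with \eqref{eq:cont}'' to pass to a uniform $C^{\alpha}([0,T]\times\Sigma)$ bound. You have correctly identified that, taken literally, that phrase has a gap: \eqref{eq:cont} is a \emph{single--step} $L^\infty$ estimate, and iterating it over $\sim|t-s|/\tau$ steps produces $C\,|t-s|\,\tau^{\theta-1}$, which is not $\tau$--uniform. The intended reading of the paper is that the mean--value/ball--averaging argument inside the proof of Lemma~\ref{lem:apri} (namely $|\overline{\eta}_\tau(t,\bfr)-\overline{\eta}_\tau(s,\bfr)|\le C r^\alpha + C|t-s|/r^{d-1}$ with $r$ optimized) works verbatim for macroscopic time gaps $|t-s|$, not just for a single step, and this supplies the needed joint H\"older bound. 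You instead replace the ball--averaging by the multiplicative interpolation $\|f\|_{C^{0,\beta}}\le C\|f\|_{W^{2,2}}^{(1+\beta)/2}\|f\|_{L^2}^{(1-\beta)/2}$ applied to $f=\overline{\eta}_\tau(t)-\overline{\eta}_\tau(s)$, giving $\overline{\eta}_\tau\in C^{0,(1-\beta)/2}([0,T];C^{0,\beta}(\Sigma))$ and hence a joint exponent $\min\{\beta,(1-\beta)/2\}$, which maximizes to $\tfrac13$ at $\beta=\tfrac13$. Both routes consume exactly the same two a--priori bounds ($\|\Delta\etah^k\|_{L^2}\le C$ from the elastic energy and $\|\zh^k\|_{L^2}\le C$ from the kinetic energy); yours packages them via interpolation theory, the paper's via a direct averaging argument. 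Your version is slightly more robust (it treats $d=2$ and $d=3$ on equal footing, whereas the ball--averaging in Lemma~\ref{lem:apri} is written for two--dimensional $\Sigma$), at the cost of invoking the Gagliardo--Nirenberg machinery. One small stylistic remark: the statement ``$C^{0,\beta}(\Sigma)$ lying between $L^2$ and $W^{2,2}$'' is loose --- what you actually interpolate is $W^{1+\beta,2}$, which then embeds into $C^{0,\beta}$ --- but the ensuing computation is right, and the identity $\|\nabla^2 f\|_{L^2}=\|\Delta f\|_{L^2}$ on $W^{2,2}_0$ together with Poincar\'e correctly upgrades the energy bound on $\Delta\etah^k$ to a full $W^{2,2}$ bound.
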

\begin{proof}
Sobolev embedding implies that $\overline{\eta}_\tau(t)$ is bounded in $C^{\alpha}(\Sigma)$ for all $\alpha\in (0,1)$ uniformly in $t,\tau$. Combining that with \eqref{eq:cont} implies that $\overline{\eta}_\tau$ is bounded in $C^{\alpha}([0,T]\times \Sigma)$ for all $\alpha\in (0,\frac13)$ uniformly in $\tau$. Hence the theorem of Arzela Ascoli implies the result.
\end{proof}
In the following we may assume that $\overline{\eta}_{\tau}\to \eta$ uniformly (omitting the index $j$).
Now, we take a test function on the limit domain:
\begin{align}
\label{eq:test function}
\begin{aligned}
&(\psi,\Psi)\in C^\infty([0,T],C^\infty_0(\Sigma))\times C^\infty([0,T]\times \Oh(t);\mathbb{R}^d)\text{ such that }\Psi(t)|_{\Gamma_D}=0,
\\
&\Psi(t,\cdot,\eta(t,\cdot)+H)=\psi (t,\cdot )\bfe_d\text{ on }\Sigma \text{ and } \Psi(t)\equiv0\equiv\psi(t)\text{ for all }t\geq  T.
\end{aligned}
\end{align}
In order to satisfy the coupling condition we introduce an approximating sequence introducing the new approximation parameter $\epsilon\in(0,1)$ 
\begin{align}
\label{eq:epsilonlayer}
\begin{aligned}
&\Psie:C^\infty([0,T]\times \mathbb{R}^3;\mathbb{R}^d)\text{ such that }
\Psie(t,\bfr,x_d)=\psi (t,\bfr )\bfe_d\text{ for all }\bfr\in \Sigma
\\
&\text{ and }x_d\in (\eta(t,\cdot)+H-\epsilon,\eta(t,\cdot)+H+\epsilon).
\end{aligned}
\end{align}
Such an approximation can be made precise by taking a cut-off function. We take $\phi_\epsilon\in C^\infty[0,\infty)$, such that $\phi^{(k)}_\epsilon(0)=0$ for all $k\in \mathbb{N}$ and $\phi(x)\equiv 1$ for all $x\in [\epsilon,\infty)$ and $0\leq \phi'_\epsilon\leq \frac{2}{\epsilon}$. Moreover, we take for a function $b:C^\alpha([0,T])$ the notation $(b)_\epsilon$ as the standard convolution function. Recall that since $\eta\in C^\alpha$ uniformly we find  in particular $(\eta)_\epsilon\leq \eta + \epsilon^\alpha$.
Then (for a fixed $t$) we define for $\epsilon<\min\{\frac{1}{3}\delta_0,1\}$
\begin{align*}
\Psie(t,\bfr, x^d):= (1-\phi_\epsilon(H+(\eta)_\epsilon(t)-2\epsilon^\alpha+x_d))\Psi(t,\bfr, x^d)+\phi_\epsilon(H+(\eta)_\epsilon(t)-2\epsilon^\alpha+x_d)\psi(t,\bfr).
\end{align*}
%
%
For $\tau<\frac12\epsilon$ the function $(\psi(t),\Psie(t))$ is now an admissible test function for all $t\in [0,T]$.
For the continuity equation we do not need the extra approximation parameter for the test function since no boundary values are requested.

\begin{Theorem}[Consistency of the semi-discrete scheme \eqref{SS}]\label{Thm_C1}    
Let $(\vrh, \vuh, \etah)$ be a solution of the scheme \eqref{SS}. 
 Then for any 
$\varphi \in C^2([0,T]\times \mathbb{R}^d)$ we have
\begin{equation}\label{consistency_semi_den}
- \intO{\vrh^0 \varphi^0}  - \int_0^T \intOtau{t}{ \left( \vrh  \pdt \varphi +  \vrh \vuh \cdot \Grad \varphi \right) }  = \order(\TS^\vartheta),
\end{equation}
If moreover, $\overline{\eta}_\eps\rightarrow \eta$ in $C^\alpha([0,T]\times \Sigma)$ (for some $\alpha\in (0,1)$, then there exists $\vartheta>0$ for all
pairs $(\Psi,\psi)  \in C^2_0(0,T\times \mathbb{R}^d) \times C^2_0([0,T]\times\Sigma)$ as constructed in \eqref{eq:test function} we have uniformly in $\epsilon$ that for all $\tau\leq \frac12\epsilon$ and $\Psie$ satisfying \eqref{eq:epsilonlayer} that
\begin{multline} \label{consistency_semi_mom}
-\intO{\vrh^0 \vuh^0 \cdot \Psie^0}  - 
\int_0^T \intOtau{t}{ \left( \vrh  \vuh \cdot \pdt \Psie + \vrh \vuh\otimes \vuh : \Grad \Psie \right) } \\
+ \mu \int_0^T \intOtau{t}{ \Grad \vuh^{k} : \Grad \Psie }   
+ \left( \mu + \lambda \right) \int_0^T \intOt{t}{ \Div \vuh^{k} \Div \Psie } 
- \int_0^T \intOtau{t}{ p(\vrh) \Div \Psie }  
\\
-\intS{\pdt \eta(0) \psi^0} - \int_0^T \intS{\delta_t \etah \pdt \psi }\dt + \int_0^T \intS{K'(\etah) \psi} \dt
=
\int_0^T \intS{ \gh \psi }  +  \int_0^T \intOtau{t}{\bfh \cdot \bfPsih} + \order(\TS^\vartheta) .
\end{multline}
\end{Theorem}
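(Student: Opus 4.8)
\emph{Overall blueprint.} Both identities are obtained by inserting a well-adapted discrete test function into the scheme \eqref{SS}, multiplying by $\TS$, summing over the time steps $k=1,\dots,N_T$, rewriting the discrete time-difference terms by means of the discrete Reynolds transport Lemma~\ref{lem_DRT}, and collecting every discretisation defect into the $\order(\TS^\vartheta)$ remainder with the help of the energy bounds, the refined a-priori bounds \eqref{eq:aprir}--\eqref{eq:aprir2}, and the Hölder-in-time estimate \eqref{eq:cont}. A structural observation used throughout is that, reading off \eqref{ALE_mapping} and \eqref{mesh_velocity}, one has the \emph{exact} identity $\bfX_{k-1}^k(\bfx)-\bfx=\TS\,(\vwh^{k}\!\circ\bfX_{k-1}^k)(\bfx)$ on $\Oh^{k-1}$.

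\emph{Continuity equation.} Take $\varphih^k:=\varphi(t^k,\cdot)|_{\Oh^k}$, which is admissible in \eqref{SS_D} since there is no coupling constraint. The only nontrivial point is to rewrite $\sum_k\TS\intOk{(D_t\vrh^k)\varphih^k}$: using the product rule $D_t(\vrh^k\varphih^k)=(D_t\vrh^k)\varphih^k+(\vrh^{k-1}\!\circ\bfXkm\,\mJkm)\,\TS^{-1}(\varphih^k-\varphih^{k-1}\!\circ\bfXkm)$ together with Lemma~\ref{lem_DRT}, and a change of variables in the last term, one gets a telescoping sum minus $\sum_k\intOkm{\vrh^{k-1}(\varphih^k\!\circ\bfX_{k-1}^k-\varphih^{k-1})}$. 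A Taylor expansion around $(t^{k-1},\bfx)$ combined with the displacement identity above turns this into $\sum_k\TS\intOkm{\vrh^{k-1}\big(\pdt\varphi+(\vwh^{k}\!\circ\bfX_{k-1}^k)\cdot\Grad\varphi\big)(t^{k-1})}$ up to a higher-order remainder. Integrating the convective term of \eqref{SS_D} by parts (the boundary term vanishes because $\vvh^k=\vuh^k-\vwh^k=0$ on $\pd\Oh^k$) produces $-\sum_k\TS\intOk{\vrh^k(\vuh^k-\vwh^k)\cdot\Grad\varphih^k}$. The two families of $\vwh$-contributions cancel — this is precisely what the corrector field $\vwh$ is designed for: after the change of variables just used they differ by a term with prefactor $(\vrh^{k-1}\!\circ\bfXkm)\mJkm-\vrh^k=-\TS\,D_t\vrh^k$ (hence controlled through \eqref{SS_D} and the weighted bound on $D_t^{\ALE}\vrh$ in \eqref{r1H}) and by time/space Riemann-sum defects controlled via \eqref{div_jacob} and \eqref{eq:cont}. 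What is left is exactly \eqref{consistency_semi_den}, the final-time term $\intO{\vrh^{N_T}\varphih^{N_T}}$ being dropped when $\varphi(T,\cdot)\equiv0$ (as for the test functions entering the momentum identity) and otherwise carried along.

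\emph{Momentum equation.} Choose the discrete test pair $\big(\Psie(t^k,\cdot)|_{\Oh^k},\,\psi(t^k,\cdot)\big)$, which by the construction of the $\epsilon$-regularised test function \eqref{eq:epsilonlayer} is admissible for \eqref{SS_M} for every $\TS<\tfrac12\epsilon$. The inertial term $\sum_k\TS\intOk{D_t(\vrh^k\vuh^k)\cdot\Psie(t^k)}$ is treated exactly as the density term (product rule $+$ Lemma~\ref{lem_DRT}), giving $-\intO{\vrh^0\vuh^0\cdot\Psie^0}$ plus $\sum_k\TS\intOkm{\vrh^{k-1}\vuh^{k-1}\cdot\big(\pdt\Psie+(\vwh^{k}\!\circ\bfX_{k-1}^k)\cdot\Grad\Psie\big)}$ up to a remainder; the convective term is integrated by parts (again no boundary contribution), its $\vwh$-part cancels the one just produced, and the leftover reconstructs $-\int_0^T\intOtau{t}{\vrh\vuh\otimes\vuh:\Grad\Psie}$. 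The viscous term is linear: writing $\bfS(\Grad\vuh^k):\Grad\Psie(t^k)=\mu\,\Grad\vuh^k:\Grad\Psie(t^k)+(\mu+\lambda)\,\Div\vuh^k\,\Div\Psie(t^k)$ modulo a boundary term kept together with the shell contribution via the coupling condition $\Psie(t^k)|_{\Gamma_S}=\psi(t^k)\bfe_d$, it carries only an $\order(\TS)$ time-averaging defect; similarly the pressure term $-\sum_k\TS\intOk{p(\vrh^k)\Div\Psie(t^k)}$ and the forcing terms reproduce their continuous counterparts up to the modulus of continuity of $\bff,g$. For the shell, an Abel summation of $\sum_k\TS\intS{\delta_t\zh^k\,\psi(t^k)}$ (using the initial shell velocity and $\psi(T,\cdot)\equiv0$) yields $-\intS{\pdt\eta(0)\,\psi^0}-\sum_k\TS\intS{\zh^k\,\delta_t\psi(t^k)}$, and $\sum_k\TS\big(\alpha\intS{\Lap\etah^k\Lap\psi(t^k)}+\beta\intS{\Grad\etah^k\cdot\Grad\psi(t^k)}\big)=\sum_k\TS\intS{K'(\etah^k)\,\psi(t^k)}$ is already the weak elastic term — both up to an $\order(\TS)$ defect after replacing $\psi(t^k)$ by $\psi$. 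Assembling these pieces gives \eqref{consistency_semi_mom}.

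\emph{Error bookkeeping and the main obstacle.} The defects are of three kinds: (i) Taylor remainders from expanding $\varphih^k\!\circ\bfX_{k-1}^k$, $\Psie(t^k)\!\circ\bfX_{k-1}^k$ and $\psi(t^k)$, involving second derivatives of the test functions times $\TS^2$- and $\TS\,\|\etah^k-\etah^{k-1}\|_{L^\infty}$-type factors; (ii) Riemann-sum mismatches between left/right time endpoints and between $\Oh^{k-1}$ and $\Oh^k$ in the $\vwh$- and convective terms; (iii) time-averaging errors from replacing $\TS^{-1}\!\int_{I^k}$-averages of data and test functions by point values. Each is bounded by combining the energy estimates with \eqref{div_jacob}, \eqref{eq:cont}, the weighted $D_t^{\ALE}\vrh$-bound from \eqref{r1H}, and — for the momentum transport and the pressure — the refined bounds \eqref{eq:aprir}--\eqref{eq:aprir2}, which is exactly where the restriction $\gamma>\tfrac65$ in $d=3$ enters. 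The genuine difficulty, and the reason the $\epsilon$-layer around the limit interface is built the way it is, is to keep all these bounds \emph{uniform in $\epsilon$}: the derivatives of $\Psie$ degenerate like powers of $\epsilon^{-1}$, but every factor $\Grad^{j}\phi_\epsilon$ occurring in a remainder multiplies a difference $\Psi-\psi\bfe_d$ that vanishes on the limiting interface and is therefore of order $\epsilon^{(\text{power})}$ on the $\epsilon$-layer; combining this gain with $\TS\le\tfrac12\epsilon$ absorbs the resulting $\TS/\epsilon\le\tfrac12$ ratios, while the thin-layer integrals of $\vrh|\vuh|^2$, $\vrh|\vuh|$ and $\vrh^\gamma$ are controlled through \eqref{eq:aprir}--\eqref{eq:aprir2} and the $L^2(0,T;L^6)$-bound on $\vuh$. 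Carrying out this cancellation quantitatively, simultaneously against the whole set of error terms, is the technical core; once done, the total defect is $\order(\TS^\vartheta)$ for some $\vartheta>0$ depending only on $\gamma$ and on the Hölder exponents of Lemma~\ref{lem:apri}.
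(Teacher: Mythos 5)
Your proposal is correct and follows essentially the same strategy as the paper: discrete test functions via $\Pi_t$, an Abel-type reshuffling of the fluid and shell time-difference terms onto the test function, Taylor expansion to produce $\partial_t$ and the $\vwh\cdot\nabla$ corrector (with the identity $\bfX_{k-1}^k(\bfx)-\bfx=\TS\,\vwh^k\circ\bfX_{k-1}^k$), exact cancellation of the $\vwh$-contributions against the ALE convective part, and error bookkeeping via Lemma~\ref{lem:apri}, \eqref{eq:aprir}--\eqref{eq:aprir2} and the $\epsilon$-layer cancellation. Your ``product rule plus Lemma~\ref{lem_DRT}'' presentation is just an algebraic repackaging of the paper's change-of-variables-then-reindex computation leading to \eqref{CT}.
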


\begin{proof}
To prove the consistency, we must test the discrete problem \eqref{SS} with piecewise constant in time test functions. Therefore, we have to apply the piecewise constant projection operator $\Pi_t$ introduced in \eqref{Pit} to the smooth test functions $ \varphi$, $\Psi$  and $\psi$. 
Note that for any $\phih = \Pit \phi$, $\phi \in \{\varphi, \Psie, \psi \}$ and  for any piecewise constant in time function $\rh$ it holds 
\[
\int_0^T \rh \phih  \dt =\sum_{k=0}^{N_T-1} \int_{I^k} \rh \Pit{\phi} \dt = \sum_{k=0}^{N_T-1} \int_{I^k} \rh \phi \dt = 
 \int_0^T \rh \phi \dt. 
\]
Thanks to this equality, hereafter, we will directly use smooth (in time) test functions to show the consistency of our numerical scheme. 

The construction of the $\Psie$ is such that we may multiply \eqref{SS_M} with the couple $(\Psie, \psi)$. 
As we are dealing with continuous in space functional spaces, we only need to treat the consistency error of the time derivative terms. 

First, for the time derivative term of the shell displacement, we have 
\begin{equation}\label{consistency_shell}
\begin{aligned} 
& \int_0^T  \intS{\delta_t \zh \psih } \dt
= \int_0^{T} \intS{ \frac{ \zh(t) - \zh(t-\TS)}{\TS} \psi(t)}  \dt
 =
 \frac1{\TS} \int_0^T \intS{\zh(t) \psi(t) } \dt
 -   \frac1{\TS}  \int_{-\TS}^{T-\TS}   \intS{\zh(t) \psi(t+\TS) }  \dt
\\& =
\int_0^{T} \intS{ \zh(t) \frac{ \psi(t) - \psi(t+\TS)}{\TS} }  \dt
 +  \frac1{\TS} \int_T^{T+\TS} \intS{\zh(t) \underbrace{\psi(t+\TS)}_{=0}}  \dt
  - \frac1{\TS} \int_{-\TS}^{0}  \intS{\zh(t) \psi(t+\TS)} \dt
\\& =
- \int_0^T \intS{\zh \big(\pdt \psi + \frac{\TS}{2} \pdt^2 \psi|_{t^*} \big) }  \dt
  - \intS{ \psi^{0} \zh^0 } 
 =   \int_0^T \intS{\zh \pdt \psi }  \dt
  - \intS{ \psi^{0} \pdt \eta(0) } 
  +c(\norm{\psi}_{C^2}, \norm{\zh}_{L^\infty(0,T;L^2(\Sigma))}) \TS
  ,
\end{aligned}
\end{equation}
where $t^* \in(t,t+\TS)$ comes from Taylor's expansion in the last second equality. 
In the following we use $\rh$ as a substitute for either $\vrh$ or $\vrh\vuh$.
We begin by the observation, that
\begin{align*}
 &   \int_{t_{k}}^{t^{k+1}} \int_{\Oh^k}\rh^{k-1}\circ\bfX_{k-1}^k\mathcal{J}_{k-1}^k\Psie(t)\, dx\, dt
    = \int_{t_{k-1}}^{t^k} \int_{\Oh^k}\rh^{k-1}\circ\bfX_{k-1}^k\mathcal{J}_{k-1}^k\Psie(t+\tau)\, dx\, dt
\\&
    = \int_{t_{k-1}}^{t^k} \int_{\Oh^{k-1}}\rh^{k-1}\Psie\Big(t+\tau,\bfr, x_d\frac{\eta^{k}+H}{\eta^{k-1}+H}\Big)\, d\bfr\, d x_d\, dt
\end{align*}
 We find
\begin{equation*}
\begin{split}
& \int_0^T \intOtau{t}{D_t \rh \Psie }  \dt=\sum_{k=1}^{N_T} \int_{t_{k}}^{t^{k+1}}\int_{\Oh^k}\frac{\rh^k-\rh^{k-1}\circ\bfX_{k-1}^k\mathcal{J}_{k-1}^k}{\tau}\Psie(t)\, dx\, dt
\\
&=\sum_{k=2}^{N_T}\int_{t^k}^{t^{k+1}}\int_{\Oh^k}\rh^k\frac{\Psie(t,\bfr,x_d)-\Psie\Big(t+\tau,\bfr, x_d\frac{\eta^{k+1}+H}{\eta^{k}+H}\Big)}{\tau}\, dx\, dt - \frac{1}{\tau}\int_0^{\tau}\int_{\Oref}\rh^0\Psie(t)\, dx\, dt =I_1+I_2.
\end{split}
\end{equation*}
Next observe that
\begin{align*}
    &\Psie\Big(t,\bfr, x_d \frac{\eta^{k+1}+H}{\eta^{k}+H}\Big)-\Psie\Big(t,\bfr, x_d\Big)=\int_0^1\partial_d\Psie\Big(t,\theta x_d\frac{\eta^{k+1}+H}{\eta^{k}+H} +(1-\theta)x_d\Big)\, d\theta x_d\frac{\eta^{k+1}-\eta^{k}}{H+\eta^{k}}
    \\
    &= -\tau \vwh^k\cdot \nabla\Psie(t,\bfr,x_d) + \int_0^1\partial_d\Big(\Psie\Big(t,\theta x_d\frac{\eta^{k+1}+H}{\eta^{k}+H} +(1-\theta)x_d\Big)-\Psie(t,\bfr,x_d)\bigg)\, d\theta x_d\frac{\eta^{k+1}-\eta^{k}}{H+\eta^{k}}
    \\
     &=:-\tau \vwh^{k} \cdot \nabla\Psie(t,\bfr,x_d) +\mathcal{R}^k.
\end{align*}
By Taylor expansion (and the bounds on $\etah$) we find $\abs{\mathcal{R}^k}\leq c\norm{\nabla^2\Psi}_\infty\abs{\eta^{k+1}-\eta^k}^2$ which implies in particular that for $\alpha$, such that $\alpha=\frac{2}{\beta'}$ (where $\beta$ is defined via $\gamma$ in \eqref{eq:aprir}) and by Lemma~\ref{lem:apri} that
\[
\frac{1}{\tau}\int_{t^k}^{t^{k+1}}\int_{\Oh^k}\abs{\rh^k}\abs{\mathcal{R}^k}\, dx\leq c\norm{\nabla^2\Psi}_\infty\norm{\eta^{k+1}-\eta^k}^{2-\alpha}_\infty\tau^\alpha\norm{\zh^{k+1}}_{L^{\alpha\beta'}(\Sigma)}^\alpha\norm{\rh^k}_{L^\beta(\Oh^k)}
\leq C\tau^{\alpha+(2-\alpha)\theta}\norm{\rh^k}_{L^\beta(\Oh^k)}.
\]
Which implies by \eqref{eq:aprir2} and Lemma~\ref{lem:apri} that for $\frac{2}{\beta'} + \frac{2-\frac{2}{\beta'}}{3}-\frac{\beta}{2}>\vartheta>0$ we have 
$
\sum_{k=1}^{N_T}\frac{1}{\tau}\int_{t^k}^{t^{k+1}}\int_{\Oh^k}\abs{\rh^k}\abs{\mathcal{R}^k}\, dx=O(\tau^\vartheta)$. 
Actually $
\frac{2-\frac{2}{\beta'}}{3}-\frac{\beta}{2}>0\Longleftrightarrow \alpha +\frac{2-\alpha}{3}-\frac{1}{2-\alpha}>0$ 
which is true for all $\alpha<3/2$. Hence for all $\gamma>\frac{6}{5}$ there is a $\vartheta>0$. The maximum is achieved for $\alpha=2-\sqrt{3/2}$, then $\vartheta=2-2\sqrt{2/3}$ and $\beta=\sqrt{6}$ which is admissible for $\gamma>\frac{6}{5}(1+\sqrt{6})$ in 3d and all $\gamma>1$ in 2D.

%

Now we calculate using Taylor's expansion (using the uniform bounds on $\norm{\partial_t^2\Psie}_\infty \tau \norm{\rh}_{L^1(0,T;L^1(\Oh)} $) we find for a suitable $\theta>0$ that
\begin{align*}
    I_1&= \sum_{k=2}^{N_T}\int_{t^k}^{t^{k+1}}\int_{\Oh^k}\rh^k\frac{\Psie(t,\bfr,x_d)-\Psie\Big(t+\tau,\bfr, x_d\Big)}{\tau} + \int_{\Oh^k}\rh^k\frac{\Psie(t+\tau,\bfr,x_d)-\Psie\Big(t+\tau,\bfr, x_d\frac{\eta^{k+1}+H}{\eta^{k}+H}\Big)}{\tau}\, dx\, dt
    \\
    &= -\sum_{k=2}^{N_T}\int_{t^k}^{t^{k+1}}\int_{\Oh^k}\rh^k\partial_t\Psie\, dx\, dt+O(\tau) 
 - \sum_{k=2}^{N_T}\int_{t^k}^{t^{k+1}}\int_{\Oh^k}\rh^k\vwh^{k} \cdot \nabla\Psie(t+\tau)\, dx\, dt + O(\tau^\theta).
\end{align*}

Consequently we derive 
\begin{equation}\label{CT}
\begin{split}
 \int_0^T \intOtau{t}{D_t \rh \Psie }  \dt   + \intOtau{t=0}{\rh^0\Psi^0}\dt
 +  \int_0^T  \intOtau{t}{ \rh(t) \pdt \Psie(t)  } \dt 
\\+ \int_0^T \intOtau{t}{\rh(t) \vwh(t) \cdot \Grad \Psie (t) }
= \order(\TS^\vartheta), \; \vartheta >0, 
\end{split}
\end{equation}
for $\rh$ being $\vrh$ we may take $\Psie\equiv \varphi$  in case $\rh$ = $\vrh\vuh$ we have to take the $\epsilon$-approximation.

Finally, substituting  \eqref{CT} into the continuity method, \eqref{CT} and \eqref{consistency_shell} into the coupled momentum and structure method \eqref{SS_M}, we finish the proof. 
\end{proof}
%
%
%
%

\begin{Remark}
In variable domain analysis (in particular in fluid structure interaction involving elastic solids) it is unavoidable to approximate the space of test functions at some point. In our case we do this by introducing the parameter $\epsilon$. 
We wish to indicate what are the next steps in order to prove that a subsequence converges to a weak solution, which will be the content of a second paper (relaying on this work). The energy estimate allows to take weakly converging sub-sequences (in $\tau$). In order to pass with $\tau\to 0$ one has to prove that the various non-linearities as the pressure and the convective terms do indeed decouple in the limit. This is a sophisticated analysis introduced in~\cite{BS} and goes back to seminal works of Lions~\cite{Li2}. The last step is then to pass with $\epsilon \to 0$. This limit passage is how ever not as dramatic (essentially it uses Taylor expansion); but it depends sensitively on the regularity of $\partial_t\eta$ and in particular on the fact that $\gamma>\frac{12}{7}$. 
\qed
\end{Remark}

\section{fully discrete scheme}
In this section, we propose a fully discrete scheme for the FSI problem \eqref{pde}. For the time discretization, we take the method introduced in Section~\ref{sec:SS}. Further, for the space discretization, we take a mixed finite volume-finite element method proposed by Karper~\cite{Karper} for the compressible Navier-Stoks part \eqref{pde_D}--\eqref{pde_M} and a standard finite element method for the shell part \eqref{pde_S}. As in the last section we keep $\eps$ as the time discretization parameter. For the space discretization we introduce the value $h$ which is assumed to be coupled to $\eps$ in a convenient manner\footnote{For the consistency actually we will assume that $h\sim \tau$.
}. In the following we will use $\vrht,\vuht,\vwht,\etaht,\zht$ as functions which are discrete in space and piecewise constant in time. Similar notations with the same subscripts will be applied to all functions that will appear in this section. 
We shall write $a\aleq b$ if $a\leq c b$ for some positive constant $c$ (independent of $h$ and $\TS$), and $a \approx b$ if $a \aleq b$ and $b \aleq a$.  

\subsection{Discretization}\label{ssec:discret}
For the discretization in time, we follow the previous section and approximate all unknowns including the mesh and test functions by piecewise constant in time functions. 
For the space discretization, we start with the notations on the fixed reference domain. 
\paragraph{Mesh for the fluid part} 
Let $\Omega^0=\Oref$ (the reference domain) be a closed polygonal domain, and $\Thz$ be a triangulation of $\Omega^0$: $ \overline{\Omega^0} = \cup_{K \in \Thz} K $. 
The time evolution of the domain (or mesh) is described by the  ALE mapping for $k\in \{0,..,N_T\}$ that 
\[
\Oht^k= \Oref \circ (\ALEhtk)^{-1} \mbox{ and } \Thk=\Thz \circ (\ALEhtk)^{-1}
 \] 
 where the ALE mapping $\ALEhtk$ will be given in \eqref{proj_ALE} below. 
%
Further, we adopt the following notations and assumptions for the mesh \seb{of the fluid part}. 
\begin{itemize}
\item By $\edges(K)$ we denote the set of the  edges  $\sigma$ of an element $K \in \Tht$. The set of all edges is denoted by $\edges$. We distinguish exterior and interior edges: 
$
\edges = \edgesint \cup \edgesext,\
\edgesext = \left\{ \sigma \in \edges \ \Big| \ \sigma \in \partial \Oht \right\},\
\edgesint = \edges \setminus \edgesext.
$
\item We denote the set of all faces on the top boundary by $\edgesextS$ $(\subset \edgesext)$. 
\item For each $\sigma \in \faces$ we denote $\bfn$ as the outer normal. Moreover, for any $\sigma=K|L$, we write $\bfn_{\sigma, K}$ as the normal vector  that is oriented from $K$ to $L$ (so that $\bfn_{\sigma, K}=-\bfn_{\sigma,L}$), where $K|L$ denotes a common edge.
\item We denote by $ |K|$ and $ |\sigma|$ the Lebesgue measure of the element $K$ and edge $\sigma$  respectively. Further, we remark $h_K$ as the diameter of $K$ and  $h=\max_{K \in \Thz} h_K$ as the size of the triangulation. The mesh is regular and quasi-uniform in the sense of \cite{Ciarlet_fem}, 
 i.e. there exist positive real numbers $\theta_0$ and $c_0$ independent of $h$ such that
$ 	\theta_0 \leq   \inf \left\{ \frac{\xi_K}{h_K}, K \in \Thz \right\}$  and $c_0 h \leq h_K   $,  
where $\xi_K$ stands for the diameter of the largest ball included in $K$. 
\item The mesh is built by an extension of the $(d-1)$-dimensional bottom surface mesh in the $d^{\rm th}$ direction, i.e., the projection of any element in the $d^{\rm th}$ direction must coincide with an edge $\sigma \in \edgesext$ on the bottom surface. We give an example in two dimensions for illustrating such kind of mesh, see Figure~\ref{fig_mesh}. \seb{In particular $\Th^k$ is assumed to be a conformal triangulation uniform in $k,h,\tau$.}
\end{itemize}
\paragraph{Mesh for the structure part} 
 The mesh discretization of the time independent domain $\Sigma$ coincides with that of the initial mesh of the top boundary $\Sh=\edgesextS^0$.  

\begin{figure}\centering
\includegraphics[width=0.3\linewidth]{./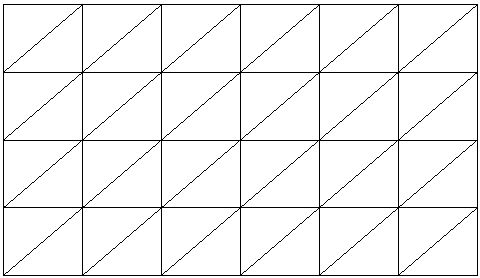}
\includegraphics[width=0.3\linewidth]{./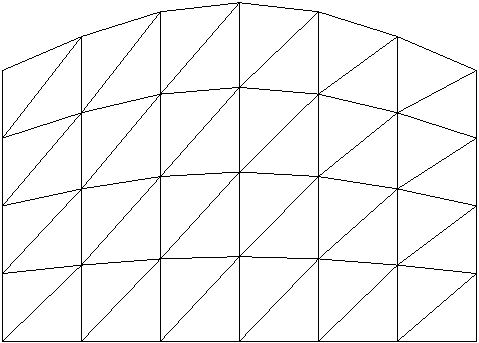}
 \caption{An example of mesh in two dimensions: left is the reference mesh and right is the deformed current mesh.}\label{fig_mesh}
\end{figure}
\begin{Remark}
On one hand, the mesh is constructed by the extension of the mesh of the $(d-1)$-dimensional bottom boundary. On the other hand, we will define a linear function for the discrete ALE mapping $\ALEht$, see \eqref{linear_ALE} below.   
As a consequence, any triangle on the reference mesh is be kept to be a triangle in the current mesh, see a two dimensional mesh discretization in Figure~\ref{fig_mesh}. 
\end{Remark}

\paragraph{Functional spaces}
Our scheme utilizes spaces of piecewise smooth functions, for which we define the traces
\[\begin{aligned}
v^{\rm out} = \lim_{\delta \to 0} v(x + \delta \vc{n}), \;
v^{\rm in} = \lim_{\delta \to 0} v(x - \delta \vc{n}), \quad x \in \sigma, \ \sigma \in \edges.
\end{aligned}\]
Note that, $v^{\rm out}_\sigma$ is set according to the boundary condition  for an exterior edges $\sigma \in \edgesext$. 
We also define 
\[
\jump{v}_\sigma = v^{\rm out} - v^{\rm in}, \ \Ovg{ v } = \frac{ v^{\rm out} + v^{\rm in} }{2}, \ \avg{v} = \frac{1}{|\sigma|} \intsh{v}.
\]
Next, we introduce on the reference mesh 
the space of piecewise constant functions
\[
\hXspace  = \left\{ \varphi \in L^1(\Oref) \ \Big|\ \varphi|_K = {\rm const} \in \R \ \mbox{for any} \ K \in \Thz \right\},
\]
and the space of the linear Crouzeix--Raviart finite element 
\[
\hYspace = \left\{ v \in L^2(\Oref) \ \middle| \ v|_K = \mbox{affine function on} \ K \in \Thz, \
\intsh{ \jump{v}_\sigma }=0 \ \mbox{for}\ \sigma \in \edgesint^0\right\},
\]
and the space of piecewise quadratic functions on the shell $\Sigma$ 
\[ \hWspace = \left\{ q \in  C^1(\Sigma)
 \ \Big| q|_\sigma \in \mathcal{P}^2(\sigma) 
 \ \mbox{for}\ \sigma \in \Sh \right\} .
\] 
In accordance with \eqref{two_domain}  we denote $\vht = \hvht \circ \ALEht^{-1}$ 
for all $t\in(0,T)$ and 
for all unknowns including the test functions $v\in\{ \vr, \vu, p, \eta, z , \vw, \varphi, \bfPsi, \psi\}$ as well as the function spaces  
\[ \Xspace(\Oht(t)) = \hXspace \circ \ALEht^{-1}(t),\quad   \Yspace(\Oht(t)) = \hYspace \circ \ALEht ^{-1}(t), \quad   \Wspace(\Sigma) \equiv \hWspace(\Sigma).\] 
Thus it is clearly that 
\begin{equation*}
\varphiht \in \Xspace \Longleftrightarrow \hvarphiht  \in \hXspace, \quad 
     \bfPsiht \in \Yspace \Longleftrightarrow \hbfPsiht \in \hYspace, \quad
            \psiht \in \Wspace \Longleftrightarrow \hpsiht \in \hWspace \mbox{ and } \psiht \equiv \hpsiht.
\end{equation*}
The associated projections of the functional spaces are 
\[
\Pi_\grid : L^1 (\Oht) \to \Xspace , \ \Pim{v} =  \frac{1}{|K|} \int_K v \ \dx, \ K \in \Th.
\]
and \seb{(the uniquely defined interpolation operator~\cite{EG})}
\[
\Pi_\faces : W^{1,1}(\Oht) \to \Yspace,\text{\seb{such that} } \ \ints{ \Pie{v} } = \ints{v} \ \mbox{for any}\ \sigma \in \edges.
\]
We shall frequently write $
\avg{v} =  \frac{1}{|\sigma|}\ints{ \Pie{v} } $.  
Finally\footnote{Please observe the difference between $\widehat{\nabla}_h \hbfPsiht$ and $\reallywidehat{\nabla \bfPsiht}$ which is defined in \eqref{div_twomesh}.} we use the definition of $\widehat{\nabla}_h \hbfPsiht$ as the discrete gradient for $\hbfPsiht \in \hYspace$.

\paragraph{Coupling at the boundary and ALE mapping}
Following the above notation we denote by $\Gamma_S^k=\Gamma_S(t^k)$ the piecewise quadratic boundary produced by $\eta_h^k$. Namely
$ \Gamma_S^k=\{(\bfr,x_d)\in[0,L]^{d-1}\times [0,\infty),\:\ x_d=H+\etaht^k(\bfr)\}$. 
As in the last section we define $\etaht(t)=\etaht^k$ on $[k\eps,(k+1)\eps)$.
For the fluid domain we require the upper boundary of the mesh $\Tht$ to be of piecewise linear geometry. Therefore, we take a piecewise linear projection operator by keeping the values at the vertices of elements on $\Sh$. Let $\phi_j$ be the standard piecewise linear basis function of the $(d-1)$-dimensional mesh $\Sh$ defined by the vertices $\{r_i\}_{i=1}^{N_p}\subset \Sh$, where $N_p$ is the total number of vertices on $\Sh$. Then such a projection reads
\[ \Pi_p:\mathcal{P}^2(\Sigma_h)\mapsto\mathcal{P}^1(\Sigma_h). \quad \Pi(\etaht(r)) = \sum_{j=1}^{Np} \phi_j \etaht(r_j). \]
%
We also illustrate such a projection in Figure \ref{fig_proj} in the case of $d=2$, where the red line is $\Gamma_S$ while the dashed blue line is $\edgesextS$ that determines the ALE mapping. 
\begin{figure}[!h]\centering
\vspace{-0.4cm}
%
%
\begin{tikzpicture}[scale=1.0]

\draw[thick,red]   (0, 2) .. controls (1.5, 2.7) and (2.5, 2.7) .. (4, 2);
\draw[very thick, dashed,blue](0,2)--(2,2.52)--(4,2);
\draw[very thick] (2,0.4)--(0,2);
\draw[very thick] (2,0.4)--(2,2.52);
\draw[very thick] (2,0.4)--(4,2);
\draw[very thick] (2,0.4)--(2,2.52);
\path node at (-0.07,2){\begin{large} $\cred  \star$ \end{large}};
\path node at (1.93,2.53) {\begin{large} $\cred  \star$ \end{large}};
\path node at (3.93,2) {\begin{large} $\cred  \star$ \end{large}};
\end{tikzpicture}
\vspace{-0.4cm}
\caption{Part of  $\Sh$ and $\Th$ near top boundary: the red line is $ \Gamma_S=\{\bfx\;|\; x_d =\etaht +H\}$; the dashed blue line is $\edgesextS =\{\bfx\;|\; x_d= H+\Pi_p(\etaht)\}$.}\label{fig_proj}
\end{figure}
Accordingly, the discrete ALE mapping~\eqref{ALE_mapping} is redefined due to the space discretization precedingly introduced  
\begin{align}\label{linear_ALE}
\bfx:=\ALEht(\bfxref) \equiv \left(\widehat{\bfr}, \frac{ \Pi_p(\etaht) +H}{H}\xref_d \right), \text{ with } \bfr = \widehat{\bfr}, \;  x_d=  \frac{\Pi_p(\etaht)  +H}{H}\xref_d.
\end{align}
Moreover, we need to update the definitions in \eqref{mesh_velocity} and \eqref{jacob} due to the ALE mapping. 
\begin{equation}\label{proj_ALE}
\begin{aligned}
& \hvwht^{k}= \frac{\ALEhtk -\ALEhtkm }{\TS} 
= \left( {\bf0}_{d-1}, \frac{\Pi_p(\etaht^{k} -\etaht^{k-1})}{\TS }\frac{\xref_d}{H } \right)^T, \quad 
 \vwht^{k} = \left( {\bf0}_{d-1}, \frac{\Pi_p(\etaht^{k} -\etaht^{k-1})\; x_d }{\TS \; \left(\Pi_p(\etaht^{k}) +H \right) }  \right)^T, 
\\
& \mJ_i^j =\det \left( \frac{\pd \bfX_i^j(\bfx^i)}{\pd \bfx^i}  \right) = \frac{\Pi_p(\etaht^{j}) +H }{\Pi_p(\etaht^i) +H}, \quad \mbox{and} \quad 
\TS \;\Div \vwht^{k} =  \frac{\Pi_p(\etaht^{k} - \etaht^{k-1})}{\Pi_p(\etaht^{k}) + H}  =   1- \mJkm.
\end{aligned}
\end{equation}

Please observe, that here the domain $\Oht$ is defined via $\etaht$ and its triangulation $\Th$ is defined by $\Pip\etaht$. Moreover, the Dirichlet boundary values of $\vuht$ will be defined by $\Pip\zht$. 


\subsection*{ Upwind divergence}
To approximate the convective terms, we apply a dissipative upwind operator
\begin{equation}\label{def_divup}
\Divup(\rht, \vvht)(\bfx) :=\sumK \frac{1_K}{|K|} \sum_{\sigma \in \facesK} \intsh{ {\rm Up}[\rht, \vvht]}, 
\end{equation}
where
${\rm Up}[r_{h,\TS}, \vvht]  
= \underbrace{ r_{h,\TS}^{\rm up}  \avg{\vvht \cdot \vc{n}} }_{\rm standard \ upwind} - \underbrace{\hheps \jump{r_h}}_{\rm artificial\ diffusion}
= \underbrace{\Ov{r _{h,\TS} } \avg{ \vvht \cdot \vc{n}}}_{\rm convective \ part} - 
\underbrace{\big( \hheps + \frac{1}{2} | \avg{ \vvht \cdot \vc{n}} | \big) \jump{ r_h }}_{\rm dissipative \ part}, \; \varepsilon > -1.$ 
Here 
$ r_{h,\TS}^{\rm up}|_{\sigma} = 
\rht^{\rm in} \avg{\vvht \cdot \vc{n}}^+ + \rht^{\rm out} \avg{\vvht \cdot \vc{n}}^- \quad \mbox{and} \quad f^{\pm} = \frac{f\pm \abs{f}}{2}. 
$ 
As pointed out in \cite{FLMS_NS}, the additional artificial diffusion included in the above flux function is $h^{\varepsilon +1}$ which indicates $\varepsilon >-1$. 
For the consistency  we will require
\[ \varepsilon \in (0, 2(\gamma-1)). \] 
Observe, that the artificial diffusion has been introduced in order to allow to show that a weakly converging subsequence converges to a weak solution with $h,\tau\to 0$. Actually, up to today it is an unavoidable regularization with respect to the analysis. There are, however no numerical experiments that show the necessity of the artificial diffusion in this context. We wish to emphasize that the existence of the scheme, its stability, mass conservation and positivity of the density {\em do not depend} on the additional artificial diffusion term. 
However, it is important for deriving the unconditional consistency of our numerical scheme without any assumption on the regularity of the numerical solution, see Theorem~\ref{Thm_C2}.  

In accordance with the relation \eqref{Piola}, we introduce the upwind divergence on the reference domain as
\[
\reallywidehat{ \Divup(\rht, \vvht)}:  = \Divup(\rht, \vvht) \circ \ALEht =
\sum_{K \in \Thz}  \frac{1_K}{|K|} \sum_{\sigma \in \facesK}  \intshref { {\rm Up} [ \hrh, \hvuht] \abs{\mJ \Jacob^{-1} \widehat{\bfn} } }.
\]


\subsection*{Preliminary inequalities}
We assume the readers are familiar with the techniques in finite element method. For the sake of completeness, we report a few necessary inequalities. As is common the constant depend on the regularity of the mesh; i.e.\ on the constants $\theta_0$ and $c_0$ above. As follows from our estimates~\eqref{est_J} the numbers $c_0$ and $\theta_0$ can be chosen independently of $\heps$ and $\eps$. 
Meaning that
 $	\theta_0 \leq   \inf \left\{ \frac{\xi_K}{h_K}, K \in \Tht \right\}   \quad\text{ and }\quad c_0 h \leq h_K,   
$
where $\xi_K$ stands for the diameter of the largest ball included in $K$. 
Moreover, it follows from~\eqref{est_J} it follows that 
 $ h_K\leq c_1 h \mbox{ for all }  K \in \Tht$, 
with a constant uniform in $\heps$ and $\eps$.

 Since these constants can assumed to be uniform w.r.t the change of variables (due to bounds on $\etaht$) the below estimates hold both on the reference domain, as well as on the current domain. For that reason we take $\Oht=\bigcup_{K\in \Th} K$ as a (regular) polygonal domain. We define for the discrete in space function $\vuht$ the following norms:
\[ \norm{\vuht}_{1,\Th} :=  \sum_{K\in \Th} \intK{|\Grad \vuht|^2}, \quad 
\norm{\vuht}_{H^1_Y}  := \intEh{\frac1h\jump{\vuht}^2}.
\]
Next we would like to introduce from Brenner the  Korn inequality~\cite[equation (1.19)]{Brenner_MC}.
\[ 
 \norm{\vuht}_{1,\Th} 
\aleq \norm{\bfD(\vuht)}_{L^2(\Oht)} 
 + \norm{\vuht}_{L^2(\Gamma)} 
+
\norm{\vuht}_{H^1_Y} ,
\]
and the Poincar\'{e}--Friedrichs inequality  \cite[equation(1.5)]{Brenner_SINUM} 
\[
\norm{\vuht}_{L^2(\Oht)} \aleq \norm{\vuht}_{1,\Th} + \norm{\vuht}_{L^1(\Gamma)},
\]
respectively for all $\vuht \in \Yspace$. 
Thus we deduce the following modified Korn inequality
\begin{equation}\label{Korn}
  \norm{\vuht}_{1,\Th}  +  \norm{\vuht}_{L^2(\Oht)} 
  \aleq C\left( \norm{\bfD(\vuht)}_{L^2(\Oht)}  
 + \norm{\vuht}_{L^2(\Gamma)} +  \norm{\vuht}_{H^1_Y} \right) 
\end{equation}
Further, we need the following version of Sobolev's inequality~\cite[Lemma 2.3]{FLNNS}
\begin{equation}\label{Sobolev}
\norm{\vuht}_{L^6(\Oht)} \aleq  \norm{\vuht}_{1,\Th} 
 + \norm{\vuht}_{L^1(\Oht)}^2, \; \forall\; \vuht \in \Yspace.
\end{equation}

Next, due to scaling argument we introduce the trace theorem~\cite[equation (2.26)]{FLNNS}
\begin{equation}
\label{eq:invtrace}
\norm{\vu}_{L^p(\pd K)} \aleq h^{-1/p} \norm{\vu}_{L^p(K)} ,  \; \vu \in \mathcal{P}^m(K), \; 1 \leq p \leq \infty, \; \forall\; K \in \Th, 
\end{equation}
where $\mathcal{P}^m(K)$ denotes the space of polynomial degree not grater than $m$.
It is worth mentioning the inverse estimate, see\cite{Ciarlet_fem} and \cite[Remark 2.1]{HS_MAC}
\begin{equation}
\label{inv_est}
\norm{\vu}_{L^{p_1}(0,T; L^{q_1}(\Oht))} \aleq  
\TS^{\frac{1}{p_1} - \frac{1}{p_2} }  h^{d(\frac{1}{q_1} - \frac{1}{q_2}) }
\norm{\vu}_{L^{p_2}(0,T; L^{q_2}(\Oht))}, \; \forall \; 1 \leq p_2 \leq p_1 \leq \infty, \; 1 \leq q_2 \leq q_1 \leq \infty .
\end{equation}
Finally, we recall the standard interpolation error estimates for $\phi \in C^1(\Oht)$~\cite{boffi} (see also \cite[Appendix]{GallouetIMA}).
\begin{equation}\label{IP}
\begin{aligned}
\jump{\Pim \phi} \aleq h, \; 
\norm{\Pim \phi - \phi}_{L^p} \aleq h, \; 
\norm{\Pie \phi - \phi}_{L^p} \aleq h, \; 
\norm{\Pim {\Pie \phi} - \phi}_{L^p} \aleq h ,\; 
 1\leq p \leq \infty.
\end{aligned}
\end{equation}
and also from~\cite[Lemma 2.7]{Karper} and  \cite{crouzeix_raviart} that
\begin{equation}\label{interpolation}
\begin{aligned}
   \norm{\vv - \Pim\vv}_{L^2(\Oht)}  \aleq  h \norm{\Grad \vv}_{L^2(\Oht)},\; \forall \vv \in \Yspace \mbox{ or } C^1(\Oht) \\
    \norm{\vv - \Pie\vv}_{L^2(\Oht)} + h \norm{\Grad(\vv - \Pie\vv)}_{L^2(\Oht)} 
    \aleq h^2 \norm{\vv}_{W^{2,2}(\Oht)}, \; \forall \vv \in W^{2,2}(\Oht).
\end{aligned}
\end{equation}

\subsection{The  scheme}
With the above notations, we are ready to present a mixed finite volume--finite element method for the FSI problem \eqref{pde}. First we present the scheme in the current domain. 
\begin{Definition}[Fully discrete scheme on the current domain]\label{def_FS} 
We seek $(\etaht^{k}, \vrht^{k}, \vuht^{k})\in (\Wspace,\Xspace(\Oht^k),\Yspace(\Oht^k))$ for all $k \in \{ 1,\ldots, N_T \}$, 
such that for all $(\varphiht^k, \bfPsiht^k, \psiht^k) \in (\Xspace(\Oht^k), \Yspace(\Oht^k), \Wspace(\Sigma))$ with $\Pie{\bfPsiht^k\circ\ALEhtk} = \Pie{\Pip{\psiht^k}}\bfe_d$ the following hold:
\begin{subequations}\label{FS}
\begin{equation} \label{FS_D}
\intOk{ D_t \vrht^{k} \varphiht}  + \intOk{\Divup ( \vrht^{k} , \vvht^{k} ) \varphiht}  
= 0;
\end{equation}
\begin{align} \label{FS_M}
\begin{aligned}
&\intOk{ D_t \left( \vrht^{k}  \avc{\vuht^{k}} \right) \cdot \bfPsiht +\Divup (\vrht^{k} \avc{\vuht^{k}}, \vvht^{k} ) \cdot \bfPsiht }
 \\& 
 + 2\mu \intOk{  \bfD(\vuht^{k}) : \Gradh \bfPsiht } 
 + 2\mu  \intEh{ \frac1h \jump{\vuht^k} \cdot \jump{\bfPsiht}}
+  \lambda \intOk{ \Divh \vuht^{k} \Divh \bfPsiht } 
\\
&- \intOk{ p(\vrht^{k}) \Divh \bfPsiht }
+ \intS{ \delta_t \zht^{k}  \psiht }  +  \intS{  \Lap \etaht^{k} \Lap  \psiht } 
=
  \intOk{\vrht^k \bfh^{k}\cdot \bfPsiht}  + \intS{\gh^{k} \psiht} ;
  \end{aligned}
\end{align}
where 
\[ \zht^k = \delta_t \etaht^k, \;  \vvht^{k} = \vuht^{k}-\vwht^{k}, \; \vwht^{k}(\bfx) = \left( {\bf0}_{d-1}, \frac{\Pi_p[\zht^k] x_d}{\Pi_p[\etaht^{k}] +H } \right)^T,\;  
\gh^k:=\frac{1}{\eps}\int_{I^k} \Pie{g}\, \dt \text{ and }\bfh^k:=\frac{1}{\eps}\int_{I^k} \Pie{\bff}\, \dt.
\]
The scheme is supplemented with the initial data and boundary conditions 
\begin{equation*}
 \vr^0_h = \Pim{\vr_0}, \; \vuht^0 \in \Pim{\vu_0}, \; \etaht^0=0, \; \zht^{0}= 0, \quad 
 \avg{\vvht}  = 0 ,\; \jump{\vrht^{k}}_{\sigma}=0, \; \forall\; \sigma \in \edgesext. 
\end{equation*}  
\end{subequations}
\end{Definition}
Pleas observe that by construction 
\[
\avg{\vuht^k}=\avg{\vwh^k}=\avg{\Pip{\zht^k}}\text{ for all }\sigma\in \Sh.
\]
Analogously to the semi-discrete case, the fully discrete scheme \eqref{FS} preserves the total mass as well
\begin{equation}\label{MCFS}
 \intOhk{\vrht^k} = \intOhkm{\vrht^{k-1}} = \cdots = \intOref{\vrht^0} =:M_0, \; \mbox{ for all } k=1,\ldots,N_T.
\end{equation}
Actually, by setting $\varphiht \equiv 1$ in \eqref{FS_D} and noticing the conservativity of the upwind flux lead to the expected result. 
\begin{Remark}
\label{rem:implicit}
Please observe that the above scheme is fully implicit and nonlinear. This means that both velocity $\vuh^k$ and density $\vrh^k$ are coupled to {\em their domain of definition} $\Oh^k$, which is determined by the unknown $\etah^k$ for each time step $k=1,2,\ldots, N_T$. This is in striking contrast to the  approaches from incompressible flows~\cite{Boris1} where the velocity and pressure can be solved for each time step in the domain of the previous step. Here a common problem for compressible fluids reveals itself: Due to the fact that the {\em renormalized} density equation is necessary to derive an energy inequality out of the (discrete) scheme seems to enforce an implicit and non-linear scheme. Indeed, until today there is no time discretization scheme for compressible fluids that provides energy estimates which is not both {\em nonlinear} and {\em fully implicit}  even for fixed domains. Unfortunately, in our investigations it turned out that also for fluid-structure interactions there is no space to allow explicit in time parts of the solutions. 
 Nevertheless, we can solve the scheme \eqref{FS} by rewriting its equivalent formulation on the reference domain $\Oref$ to avoid the problem of solving unknowns on an unknown domain, see scheme~\eqref{FSR} given below in Definition~\ref{def_Sref}. Though the scheme \eqref{FSR} is also a fully implicit and nonlinear scheme, we can solve the nonlinear system iteratively on the given reference domain. 
Furthermore, we will show that a full discretization in time and space actually possesses a solution, see Theorem~\ref{thm:exist}. In addition, we can assure that for a positive time interval that the fully discrete scheme is well-defined, see Corollary~\ref{cor:maxtimeh}. 
\end{Remark}

Recalling \eqref{two_domain} allows to transfer the scheme in the following way:
\begin{Definition}[Fully discrete scheme on reference domain]\label{def_Sref} 
We seek the solution $\etaht^{k}\in \Wspace(\Sigma)$ and 
$(\hvrht^{k}, \hvuht^{k} ) \in \Xspace(\Oref)\times \Yspace(\Oref)$ 
for all $k \in \{ 1,\ldots, N_T \}$, such that for all $(\hvarphiht^k, \hbfPsiht^k, \psiht^k) \in \Xspace(\Oref)\times\Yspace(\Oref)\times \Wspace(\Sigma)$
, with $\Pie{\hbfPsiht^k}|_{\Sigma} = \Pie{\Pi_p[\psiht^k(\bfr)]} \bfe_d$ (for all $\bfr\in \Sigma$ and all $k\in \{ 1,\ldots, N_T \}$) the following holds: 
\begin{subequations}\label{FSR}
\begin{equation} \label{FSR_D}
\intOref{ \frac{\hvrht^k \mJik -\hvrht^{k-1}\mJikm }{\TS}  \hvarphiht }  + \intOref{\reallywidehat{\Divup ( \vrht^{k}, \vvht^{k} )} \hvarphiht}  
= 0;
\end{equation}
\begin{align} \label{FSR_M}
\begin{aligned}
&\intOref{ \frac{\hvrht^k \avc{\hvuht^{k}} \mJik -\hvrht^{k-1}\avc{\hvuht^{k-1}}\mJikm }{\TS} \cdot \hbfPsiht }+ \intOref{\reallywidehat{ \Divup (\vrht^{k} \avc{\vuht^{k}}, \vvht^{k} )} \cdot \hbfPsiht \mJik} 
\\& +2 \mu \intOref{ \reallywidehat{ \bfD( \vuht^k)}  :   \reallywidehat{ \Grad \bfPsiht} \mJik}
 +2 \mu \intEhref{\frac{1}{ h} \jump{\hvuht^k}\cdot \jump{\hbfPsiht} 
 \abs{\mJik(\Jacob_0^k)^{-T} \widehat{\bfn} }
}
+  \lambda \intOref{ \reallywidehat{ \Divh \vuht^{k}}   \reallywidehat{  \Divh \bfPsiht} \mJik  } 
\\& - \intOref{ p(\hvrht^{k})  \reallywidehat{  \Divh \bfPsiht} \mJik }
+ \intS{ \delta_t \zht^{k}  \psi }  +  \intS{  \Lap \etaht^{k} \Lap \psi } 
 \quad =
  \intOref{\hvrht^k \hbfh^{k}\cdot \hbfPsiht \mJik }  + \intS{\gh^{k} \psiht} 
  \end{aligned}
\end{align} 
supplemented with the initial  and boundary conditions 
\begin{equation*}\label{FSR_ic}
 \vr^0_{\heps,\eps} = \Pim{\vr_0}, \; \vuht^0 = \Pim{\vu_0}, \; \etaht^0=0, \; \zht^{0}= 0,
\quad 
\avg{\hvvht^{k}}:=\avg{\hvuht^{k}-\hvwh^{k}}=0, \; \jump{\hvrht^{k}}_{\sigma}=0, \; \forall\; \sigma \in \edgesext . 
\end{equation*} 
\end{subequations}
\end{Definition}
Our numerical scheme  is nonlinear, nevertheless it can be shown that such solutions always exist.
\begin{Theorem}[Existence of a numerical solution and positivity of the density.]\label{thm:exist} 
Let $ 0<\vrht^{k-1}\in \Xspace(\Oht^{k-1})$, $(\vuht^{k-1}, \etaht^{k-1}, \zht^{k-1}) \in  \Yspace(\Oht^{k-1}) \times \Wspace(\Sigma) \times \Wspace(\Sigma)$ be given. For simplicity, we denote $\Oht^k$ as $\Oht$. Then there exists $0<\vrht^k\in \Xspace(\Oht)$ and $ (\vuht^k,\etaht^k,\zht^k:= \frac{\etaht^k -\etaht^{k-1}}{\TS}) \in  \Yspace(\Oht) \times \Wspace(\Sigma)\times \Wspace(\Sigma)$ satisfying the discrete problem \eqref{FS} (or equivalently \eqref{FSR}), where $\etaht^k = \etaht^{k-1} + \TS \zht^k$. 
\end{Theorem}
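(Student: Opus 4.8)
The plan is to recast \eqref{FS} — equivalently its reference–domain form \eqref{FSR} — as a zero–finding problem for a continuous map between finite–dimensional spaces, to produce a zero by a topological–degree argument along a structure–preserving homotopy, and to read off the positivity of the density, once a solution is available, from the M–matrix structure of the discretized continuity equation.

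\emph{Step 1 (reduction).} For fixed $k$ and $\TS$ the quantities $(\vrht^{k-1},\vuht^{k-1},\etaht^{k-1},\zht^{k-1})$ are given and $\etaht^k=\etaht^{k-1}+\TS\zht^k$, so the genuine unknown of \eqref{FSR} is the triple $U=(\zht^k,\hvrht^k,\hvuht^k)$ ranging over $\Wspace(\Sigma)\times\Xspace(\Oref)\times\Yspace(\Oref)$ and subject only to the homogeneous linear coupling constraints recorded in \eqref{FSR} (which, on the upper boundary, tie the trace of $\hvuht^k$ to $\Pi_p[\zht^k]$); this defines a linear subspace $\mathcal X\cong\R^N$. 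On the open set $\mathcal U:=\{\,\zht^k:\ \Pi_p[\etaht^{k-1}+\TS\zht^k]+H>0\ \text{on }\Sigma\,\}$ the discrete ALE map \eqref{linear_ALE}, the Jacobian $\mJik=(\Pi_p[\etaht^k]+H)/H$, the mesh velocity and all the Piola transforms appearing in \eqref{FSR} are rational — in particular smooth — functions of $\zht^k$, so testing \eqref{FSR_D}--\eqref{FSR_M} against a basis of $\mathcal X$ yields a continuous residual map $\Phi:(\mathcal U\cap\mathcal X)\to\R^N$ whose zeros are exactly the solutions of \eqref{FSR}; by \eqref{two_domain} they correspond to solutions of \eqref{FS}.

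\emph{Step 2 (homotopy, a priori bound, degree).} I would connect $\Phi$ to a linear model by a homotopy $\Phi_s$, $s\in[0,1]$, with $\Phi_1=\Phi$, obtained by multiplying with $s$ the convective upwind fluxes (in both \eqref{FSR_D} and \eqref{FSR_M}), the pressure $p(\hvrht^k)$, the external forces $\hbfh^k,\gh^k$ and the previous–step momentum/deformation data, while simultaneously deforming the moving domain and its mesh velocity from the frozen configuration $\Oht^{k-1}$ (at $s=0$, where the geometric conservation identity reduces to $\mJik\equiv1$, $\vwht^k\equiv0$) to the actual $\Oht^k$ (at $s=1$); the scaling and the geometric deformation are chosen precisely so that the energy computation proving Theorem~\ref{thm_s2} and the renormalization identity of Lemma~\ref{lem_r1} go through verbatim for every $s$, with the unscaled dissipation terms on the left. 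At $s=0$ the system decouples into the trivially solvable continuity equation ($\hvrht^k=\hvrht^{k-1}$) and a coercive affine–linear momentum/structure problem (coercivity via the modified Korn inequality \eqref{Korn}, the penalty term, the positive weight $\vrht^{k-1}$, and $\psi\mapsto\int_\Sigma|\Lap\psi|^2$ on $W^{2,2}_0(\Sigma)$), so $\Phi_0$ has a unique zero with Brouwer degree $+1$. For $s\in(0,1]$ a zero of $\Phi_s$ inherits the energy and internal–energy identities with all $s$–scaled terms bounded by their $s=1$ counterparts, whence $\norm{\hvuht^k}_{1,\Th}+\norm{\zht^k}_{L^2(\Sigma)}+\norm{\Lap\etaht^k}_{L^2(\Sigma)}+\norm{\hvrht^k}_{L^\gamma(\Oref)}\le C$ with $C$ depending only on $\TS$, the given data and $\bff,g$ — hence also $\norm{\etaht^k}_{L^\infty(\Sigma)}\le C_*$ by $W^{2,2}_0(\Sigma)\hookrightarrow C^0(\bar\Sigma)$ (recall $\dim\Sigma=d-1\le2$). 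Choosing $R$ above these bounds, no $\Phi_s$ vanishes on $\partial B_R$, degree invariance gives $\deg(\Phi,B_R,0)=+1\neq0$, and a zero of $\Phi$ in $B_R$ — transported by \eqref{two_domain} — is the sought solution of \eqref{FS}.

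\emph{Step 3 (positivity) and the main obstacle.} Given a solution, positivity $\vrht^k>0$ is exactly Lemma~\ref{lem_positive}: the renormalized continuity equation with the convex function $B(\cdot)=\max\{0,-\cdot\}$ first forces $\vrht^k\ge0$, and then the cell–wise maximum principle — the strictly positive right–hand side $\tfrac1\TS|K^{k-1}|(\vrht^{k-1})_K$ produced by $\vrht^{k-1}>0$, together with the artificial–diffusion coupling of neighbouring cells — upgrades this to $\vrht^k>0$, with no step–size restriction. The one genuinely delicate point is to make sure the whole degree argument takes place inside $\mathcal U$, i.e.\ that the Eulerian geometry stays non–degenerate on the ball $\overline{B_R}$ (only then are $\Phi$, the ALE map, the mesh velocity, $\mJik$ and the change–of–variables identities in \eqref{FSR} meaningful there). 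The $L^\infty$–bound $\norm{\etaht^k}_{L^\infty(\Sigma)}\le C_*$ is the only handle, and $C_*$ is not $<H$ for arbitrary data; the way to close the argument is to first run the degree computation for the scheme in which $\Pi_p[\etaht^k]+H$ is replaced throughout — in $\mJik$ and, consistently so that the geometric conservation identity behind Lemma~\ref{lem_r1} is preserved, in $\vwht^k$ — by $\max\{\Pi_p[\etaht^k]+H,\delta\}$; this regularized scheme is defined on all of $\mathcal X$, the energy/renormalization bounds above (and hence $C_*$) are unaffected, and for $\TS$ in the admissible range — which is exactly what the inductive prolongation principle Corollary~\ref{cor:maxtimeh} and Lemma~\ref{lem:Tsd} quantify, starting from $\etaht^0\equiv0$ — one has $C_*<H$, so that picking $\delta<H-C_*$ renders the truncation inactive at any zero, which therefore solves the original \eqref{FS}. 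Everything else is either a finite–dimensional linear–algebra fact (the M–matrix structure) or a rerun of the already established energy and renormalization identities.
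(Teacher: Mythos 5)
Your proposal and the paper's proof both rest on the same core device — cast the nonlinear, fully implicit one--step problem as a zero--finding problem in finite dimensions and produce a zero by homotopy/degree, then read off positivity from the upwind M--matrix structure — and the paper even invokes the abstract result \cite[Theorem~A.1]{GallouetMAC} (reproduced here as Theorem~\ref{thm_A1}) to package the degree argument. Within that common skeleton, however, the homotopies differ: the paper keeps the Eulerian geometry tied to the unknown $\zht^k$ at every $\zeta$, never deforms the domain itself, and instead deforms the viscous and penalty terms between their current--domain and reference--domain (Piola--transformed) expressions, so that the $\zeta=0$ problem \eqref{q9}--\eqref{q10} is \emph{linear on $\Oref$} with all Jacobians evaluated at the given $\etaht^{k-1}$; you instead deform the geometry itself (from $\Oht^{k-1}$ to $\Oht^k$), scale additionally the external forces and the previous--step momentum, and tack on a truncation $\max\{\Pi_p[\etaht^k]+H,\delta\}$ to preclude mesh degeneration. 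Your truncation is a reasonable way to make the domain of the homotopy explicit, an issue the paper treats only implicitly via Corollary~\ref{cor:maxtimeh}.

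There is, however, a genuine gap in your degree argument. You place the homotopy on a ball $B_R$ in $\mathcal U\cap\mathcal X$, where $\mathcal U$ constrains only $\zht^k$ (the geometry) and says nothing about the sign of $\hvrht^k$. But the map $\Phi_s$ contains $p(\hvrht^k)=a(\hvrht^k)^\gamma$ with non--integer $\gamma>1$, which is not real--valued for negative densities, and your appeal to the energy/internal--energy identities along the homotopy already presupposes $\hvrht^k\ge 0$ (otherwise $\Hc(\hvrht^k)$ is meaningless). Establishing positivity ``after the fact'' in your Step~3 does not repair this: the degree of $\Phi$ on $B_R$ is computed through the continuous family $\Phi_s$, so $\Phi_s$ must be defined and continuous on all of $\overline{B_R}$ for every $s$, including at points with $\hvrht^k<0$. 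The paper's use of Theorem~\ref{thm_A1} is not a cosmetic choice: that theorem works on the open cone $V=\{x>0\}$ and requires an a priori bound $\epsilon<x<C_1$ for zeros at \emph{every} $\zeta$, which the paper supplies by running the upwind lower bound $\vr_K^k\ge\tfrac{|K'|}{|K|}\,\vr^{k-1}_{K'}/(1+\TS\zeta|(\Div\vvht^k)_K|)>0$ inside the homotopy, before any energy identity is invoked. To close your argument you would either have to import the same open--cone degree theorem and establish $\hvrht^k>\epsilon$ along the homotopy (not only at $s=1$), or replace $p$ by a globally defined continuous extension (e.g. $p(\vr)=a[\vr]_+^\gamma$) so that $\Phi_s$ is defined on all of $\overline{B_R}$, and only then argue that the zero found at $s=1$ has positive density so the extension is inactive.
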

The proof is an adaption of previous approaches using homotopy arguments (see~\cite{GallouetMAC}). For the sake of completeness a rigorous proof can be found in the Appendix~\ref{Ap:exist}.

\subsection{Stability}\label{S}
Since the differences of the proofs of the renomalization and the stability between the semi and fully discrete scheme are merely of technical nature we put the respective proofs in the appendix.

First, the fully discrete scheme \eqref{FS} satisfies the renormalized continuity equation. 
\begin{Lemma}[Renormalized continuity equation]\label{lem_r1}
Let $(\vrht,\vuht)\in  \Xspace \times \Yspace$ satisfy the discrete continuity equation \eqref{FS_D}. Then for any function $B \in C^1([0,\infty))$ we have 
\begin{equation*}
\begin{aligned}
\frac{ 1}{\TS} \left( \intOhk{  B(\vrht^k)}  - \intOhkm{ B(\vrht^{k-1}) }  \right) +  \intOhk{ \left(\vrht^k B'(\vrht^k) -B(\vrht^k)  \right) \Divh \vuht^k} +D_1 + D_2 =0, 
\end{aligned}
\end{equation*}
where 
\begin{align*}
D_1 &=  \frac{1}{\TS}\intOhk{  \mJkm  \left( B(\vrht^{k-1}\circ \bfXkm)  - B(\vrht^k) - B'(\vrht^{k}) \big(  \vrht^{k-1}\circ \bfXkm-\vrht^k  \big)  \right)  }, 
\\ D_2&= \sumintKs{ \left(B'(\vr_K^{k})  \jump{\vrht^{k}}  - \jump{B(\vrht^{k})} \right) \left(\left[\avg{\vvht^{k} \cdot \bfn} \right]^- -\hheps\right) }. 
\end{align*}
Moreover, $D_1 \geq 0$ and $D_2 \geq 0$ provided $B$ is convex. 
\end{Lemma}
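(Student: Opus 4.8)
The plan is to follow the structure of the proof of Lemma~\ref{lem_r0} (the semi-discrete renormalization), now keeping careful track of the two genuinely new features of the fully discrete scheme: the upwind divergence $\Divup$ replacing the exact divergence, and the fact that the density is piecewise constant (so that $B(\vrht^k)$ is still piecewise constant but one picks up jump terms across interior faces). First I would substitute $\varphiht = B'(\vrht^k)\in\Xspace$ into the discrete continuity equation~\eqref{FS_D}, which is a legitimate test function since $\vrht^k$ is piecewise constant. This splits the identity into the time-difference term $\intOhk{D_t\vrht^k\, B'(\vrht^k)}$ and the convective term $\intOhk{\Divup(\vrht^k,\vvht^k)\,B'(\vrht^k)}$.

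\textbf{The time-difference term.} This is essentially identical to the computation~\eqref{N1} in Lemma~\ref{lem_r0}, since $D_t$ has the same definition~\eqref{def_DT3} here. Using $\vrht^k B'(\vrht^k) - B(\vrht^k)$ as the algebraic identity and the relation~\eqref{div_jacob}, namely $1-\mJkm = \TS\,\Div\vwht^k$, one rewrites $\intOhk{D_t\vrht^k B'(\vrht^k)}$ as
\[
\frac{1}{\TS}\left(\intOhk{B(\vrht^k)} - \intOhkm{B(\vrht^{k-1})}\right) + \intOhk{(\vrht^k B'(\vrht^k) - B(\vrht^k))\,\Div\vwht^k} + D_1,
\]
with $D_1$ exactly the claimed ``Jacobian Taylor remainder'' term; convexity of $B$ makes $D_1\geq 0$ by the Taylor-with-remainder argument (there is a $\xi\in\co{\vrht^{k-1}\circ\bfXkm}{\vrht^k}$ so that the integrand equals $\TS\mJkm\frac{B''(\xi)}{2}|D_t^\ALE\vrht^k|^2$, then approximate general convex $B$). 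This part carries over verbatim.

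\textbf{The convective (upwind) term — the main obstacle.} This is where the work lies. One must show
\[
\intOhk{\Divup(\vrht^k,\vvht^k)\,B'(\vrht^k)} = \intOhk{(\vrht^k B'(\vrht^k) - B(\vrht^k))\,\Divh\vuht^k} - \intOhk{(\vrht^k B'(\vrht^k)-B(\vrht^k))\,\Div\vwht^k} + D_2.
\]
I would expand $\Divup$ via its definition~\eqref{def_divup}, write the element sum as a face sum over interior faces (boundary faces drop because $\avg{\vvht}=0$ there and $\jump{\vrht^k}_\sigma=0$ on $\edgesext$), and on each face $\sigma=K|L$ use the summation-by-parts identity together with the elementary algebraic fact that for the ``renormalized flux'' one has, per face,
\[
\left(\rht_K B'(\rht_K) - \rht_L B'(\rht_K)\right)\text{-type combinations} \;=\; \jump{\vrht^k B'(\vrht_K^k)} - \jump{B(\vrht^k)} + (\text{convexity remainder}),
\]
i.e. the discrete analogue of $\Grad(\vr B'(\vr)-B) = \vr\Grad B'(\vr)$ used in Lemma~\ref{lem_r0}. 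The convex inequality $B(\rht_L) - B(\rht_K) - B'(\rht_K)(\rht_L-\rht_K)\geq 0$ is what produces the sign of $D_2$; the weight $\left[\avg{\vvht^k\cdot\bfn}\right]^- - \hheps$ is $\leq 0$ precisely because $\hheps=h^\varepsilon>0$ and $[\cdot]^-\leq 0$, so the product in $D_2$ is nonnegative when $B$ is convex. I would also need to separate out the piece of $\Divup$ that reproduces the genuine divergence: using $\avg{\vvht^k\cdot\bfn} = \avg{(\vuht^k-\vwht^k)\cdot\bfn}$ and the Crouzeix–Raviart property $\ints{\jump{\vuht^k}}=0$, the ``convective part'' of the upwind flux tested against $B'$ telescopes (after summation by parts) into $\intOhk{(\vrht^k B'(\vrht^k)-B(\vrht^k))\,\Divh\vvht^k}$, exactly as in the semi-discrete case, while the artificial-diffusion and upwind-modulus parts assemble into $D_2$.

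Finally, adding the time-difference identity and the convective identity, and using $\vvht^k = \vuht^k - \vwht^k$ together with~\eqref{div_jacob} to cancel the two $\Div\vwht^k$ contributions, yields the stated renormalized equation with $D_1 + D_2$ on the left. The nonnegativity of $D_1$ and $D_2$ for convex $B$ has been established along the way; the general convex case (not necessarily $C^2$) follows by mollification, exactly as at the end of the proof of Lemma~\ref{lem_r0}. Since these manipulations are technical and parallel known fixed-domain arguments (e.g.~\cite{Karper,FLMS_NS}) combined with the moving-domain bookkeeping already developed for Lemma~\ref{lem_r0}, the detailed computation is deferred to the appendix, as announced.
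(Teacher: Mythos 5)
Your proposal follows essentially the same route as the paper's Appendix~\ref{Aprc} proof: test with $\varphiht=B'(\vrht^k)$, reuse the Taylor-expansion identity from Lemma~\ref{lem_r0} on the $D_t$-term to extract $D_1$ together with the $(\vrht^k B'(\vrht^k)-B(\vrht^k))\Divh\vwht^k$ contribution, and decompose the upwind divergence tested against $B'(\vrht^k)$ into an element-wise Gauss-theorem piece giving $(\vrht^k B'(\vrht^k)-B(\vrht^k))\Divh\vvht^k$ plus the face-sum convexity remainder $D_2$, after which $\vvht^k+\vwht^k=\vuht^k$ yields the claim. Your sign analysis (convexity inequality $\leq0$ times weight $\big[\avg{\vvht^k\cdot\bfn}\big]^- - \hheps\leq0$ for $D_2$, Taylor remainder for $D_1$) coincides with the paper's reasoning.
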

\begin{proof}
The proof is similar to Lemma~\ref{lem_r0} but we need to pay attention to the convective terms, see the proof in Appendix~\ref{Aprc}. 
\end{proof}
Next, we show the strictly positivity of the discrete density. 
\begin{Lemma}[Positivity of density]\label{lem_positive}  
Any solution to \eqref{FS} satisfies $\vrht^{k} > 0$ for all $k=1,\ldots, N_T$ provided $\vrht^0 > 0$.
\end{Lemma}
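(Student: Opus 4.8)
The plan is to reach the conclusion in two moves: first re-derive nonnegativity of $\vrht^k$ just as for the semi-discrete scheme, and then promote it to strict positivity by reading off \eqref{FS_D} on a single element.

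\textbf{Step 1 (nonnegativity).} Assuming inductively that $\vrht^{k-1}>0$, I would insert $B(\vr)=\max\{0,-\vr\}$ into the renormalized continuity identity of Lemma~\ref{lem_r1} (whose identity is valid for any convex $B\in C^1(\mathbb{R})$, its proof running parallel to that of Lemma~\ref{lem_r0}; the kink at the origin is removed by the usual mollification). Since $\vr B'(\vr)-B(\vr)\equiv 0$ the compression term disappears, $D_1\ge 0$ and $D_2\ge 0$ by convexity of $B$, and $B(\vrht^{k-1})\equiv 0$ by the induction hypothesis; hence the identity collapses to $\intOhk{B(\vrht^k)}\le 0$, that is $\vrht^k\ge 0$. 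This repeats the proof of Lemma~\ref{lem_non-negative} verbatim in the fully discrete setting.

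\textbf{Step 2 (strict positivity).} Suppose, towards a contradiction, that $\vr^k_{K_0}=0$ for some element $K_0\in\Thk$. I would test \eqref{FS_D} with $\varphiht=\ind_{K_0}\in\Xspace(\Oht^k)$. Because $\bfXkm$ maps $K_0$ bijectively onto the element $K_0^{k-1}:=\bfXkm(K_0)\in\Thkm$, on which $\vrht^{k-1}$ takes the constant value $\vr^{k-1}_{K_0^{k-1}}$, and recalling that $\mJkm$ is the Jacobian determinant of $\bfXkm$, the time-difference term becomes
\[
\intOk{D_t\vrht^k\,\ind_{K_0}}=\frac{1}{\TS}\Big(|K_0|\,\vr^k_{K_0}-|K_0^{k-1}|\,\vr^{k-1}_{K_0^{k-1}}\Big)=-\frac{|K_0^{k-1}|}{\TS}\,\vr^{k-1}_{K_0^{k-1}}<0,
\]
the strict sign coming from $\vrht^{k-1}>0$. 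For the upwind term, the faces $\sigma\in\faces(K_0)\cap\edgesext$ contribute nothing (there $\avg{\vvht^k\cdot\bfn}=0$ and $\jump{\vrht^k}_\sigma=0$ by the boundary conditions), whereas on an interior face $\sigma=K_0|L$ setting $\vr^k_{K_0}=0$ turns
\[
{\rm Up}[\vrht^k,\vvht^k]_\sigma=\Ov{\vrht^k}\,\avg{\vvht^k\cdot\bfn}-\Big(\hheps+\tfrac12\big|\avg{\vvht^k\cdot\bfn}\big|\Big)\jump{\vrht^k}_\sigma
\]
into $\vr^k_L\big(\avg{\vvht^k\cdot\bfn}^--\hheps\big)$, which is $\le 0$ since $\vr^k_L\ge 0$ by Step~1, $\avg{\vvht^k\cdot\bfn}^-\le 0$ and $\hheps=h^\varepsilon\ge 0$. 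Thus $\intOk{\Divup(\vrht^k,\vvht^k)\,\ind_{K_0}}\le 0$, and adding the two contributions of \eqref{FS_D} forces $0<0$, a contradiction. Hence $\vr^k_{K_0}>0$ for every $K_0\in\Thk$, i.e.\ $\vrht^k>0$, and the assertion follows by induction on $k$ starting from $\vrht^0>0$.

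\textbf{Main obstacle.} The delicate point is that strict positivity cannot be extracted in one stroke from a discrete minimum principle for \eqref{FS_D}: at a putative minimizing element this produces the factor $\tfrac{|K|}{\TS}+\int_K\Div\vvht^k=\tfrac{|K^{k-1}|}{\TS}+\int_K\Div\vuht^k$ (after using the geometric conservation law \eqref{GC}), whose sign is uncontrolled without a CFL-type restriction on $\TS$. Separating the two steps — nonnegativity via the \emph{renormalized} equation, where that bad term vanishes identically for $B=(\cdot)^-$, and then strict positivity via the \emph{raw} elementwise identity, where one only needs the already established sign $\vr^k_L\ge 0$ — is exactly what removes any stability condition and confirms (cf.\ Remark~\ref{rem:implicit}) that positivity of the density does not rely on the artificial diffusion $\hheps$.
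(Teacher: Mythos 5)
Your proposal is correct and follows essentially the same two-step strategy as the paper: first nonnegativity via the renormalized equation with $B(\vr)=\max\{0,-\vr\}$ (repeating Lemma~\ref{lem_non-negative}), then strict positivity by testing \eqref{FS_D} with $\varphiht=\ind_{K_0}$ at a putative zero element and extracting the contradiction $0>-|K_0^{k-1}|\vr^{k-1}_{K_0^{k-1}}/\TS = -\int_{K_0}\Divup(\vrht^k,\vvht^k)\geq 0$. You are somewhat more explicit than the paper about the boundary faces and the simplification ${\rm Up}[\vrht^k,\vvht^k]_\sigma=\vr^k_L(\avg{\vvht^k\cdot\bfn}^--\hheps)$ when $\vr^k_{K_0}=0$, but the argument and its decomposition are the same.
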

\begin{proof}
We prove via mathematical induction and start with the assumption $\vrht^{k-1}>0$. First, by exactly the same argument as in Lemma~\ref{lem_non-negative} we know that  $\vrht^{k} \geq 0$. Second, we assume there exists a $K\in \Thk$ such that $\vr_K^{k}=0$. Then a straightforward calculation from the density scheme~\eqref{FS_D} yields 
\begin{equation*} (0- |K'|\vr_{K'}^{k-1} )/ \TS=-\intK{\Divup(\vrht^k, \vvht^k)} \geq  - \sum_{\sigma\in\facesK}\vrht^{k,{\rm out}} [\avg{\vvht^k \cdot \bfn}]^- \geq 0, \mbox{ with } K' = K  \circ \bfX_{k-1}^k
\end{equation*}
which contradicts with the assumption $\vrht^{k-1}>0$. Thus we know $\vrht^k>0$ and finish the proof. 
\end{proof}
Further, setting $B=\Hc(\vrht)$ we get the following  internal energy balance.
\begin{Corollary}[Discrete internal energy balance]\label{lem_r11}   
Let $(\vrht,\vuht)\in  \Xspace \times \Yspace$ satisfy the discrete continuity equation \eqref{FS_D}. Then  there exists $\xi \in \co{\vrht^{k-1}\circ \bfXkm }{\vrht^{k}}$ and  $\zeta \in \co{\vr_K^{k}}{\vr_L^{k}}$ for any $\sigma=K|L \in \facesint^{k}$  such that
\begin{equation}\label{r1}
\frac{ 1}{\TS} \left( \intOhk{  \Hc(\vrht^k)}  - \intOhkm{ \Hc(\vrht^{k-1}) }  \right)
+ \intOhk{  p(\vrht^{k}) \Divh \vuht^{k} } = - D_1 - D_2, \mbox{ where: }
\end{equation} 
\begin{equation}\label{D1D2}
D_1= \intOhk{  \TS \mJ^k  \frac{\Hc''(\xi)}{2} \left| D_t^\ALE \vrht^{k} \right|^2 }  , \quad    
D_2= \intEhk{ \Hc''(\zeta) \jump{  \vrht^{k} } ^2 \left( h^\varepsilon  + \frac12 \left|\avg{\vvht^{k} \cdot \bfn}\right| \right)  } .
\end{equation}
\end{Corollary}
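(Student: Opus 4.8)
The plan is to read off \eqref{r1} directly from the renormalized continuity identity in Lemma~\ref{lem_r1} with the special renormalization $B=\Hc$, and then to rewrite the two defect terms into the announced form \eqref{D1D2}. First I would record the elementary properties of the pressure potential: $\Hc(\vr)=\frac{a}{\gamma-1}\vr^\gamma$ is of class $C^\infty$ on $(0,\infty)$, with $\Hc''(\vr)=a\gamma\,\vr^{\gamma-2}>0$ for $\vr>0$, and it satisfies the thermodynamic relation $p(\vr)=a\vr^\gamma=\vr\Hc'(\vr)-\Hc(\vr)$. By Lemma~\ref{lem_positive} the discrete density obeys $\vrht^k>0$, so $B=\Hc$ is an admissible (in fact $C^2$) renormalization in Lemma~\ref{lem_r1}. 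Plugging it in, the bracket $\bigl(\vrht^kB'(\vrht^k)-B(\vrht^k)\bigr)\Divh\vuht^k$ collapses to $p(\vrht^k)\,\Divh\vuht^k$, which is exactly \eqref{r1} with $D_1,D_2$ the two defect terms of Lemma~\ref{lem_r1}; it then only remains to identify them with the expressions in \eqref{D1D2} and to check their sign.

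For $D_1$ I would Taylor-expand $\Hc$ to second order: at each point there is $\xi\in\co{\vrht^{k-1}\circ\bfXkm}{\vrht^k}$ with
\[
\Hc(\vrht^{k-1}\circ\bfXkm)-\Hc(\vrht^k)-\Hc'(\vrht^k)\bigl(\vrht^{k-1}\circ\bfXkm-\vrht^k\bigr)=\frac12\Hc''(\xi)\bigl(\vrht^{k-1}\circ\bfXkm-\vrht^k\bigr)^2 ,
\]
and since by definition of the discrete material derivative $\vrht^{k-1}\circ\bfXkm-\vrht^k=-\TS\,D_t^\ALE\vrht^k$, the squared factor equals $\TS^2\,|D_t^\ALE\vrht^k|^2$; dividing the $D_1$ integral of Lemma~\ref{lem_r1} by $\TS$ gives the stated $D_1$. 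Its nonnegativity is immediate, because $\Hc''(\xi)>0$ and the Jacobian factor $\mJkm$ is positive by the bounds on $\etaht$.

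For $D_2$ the main step is bookkeeping: pass from the double sum $\sumintKs$ in Lemma~\ref{lem_r1} to a single sum over interior faces. Boundary faces drop out, since $\jump{\vrht^k}_\sigma=0$ and $\avg{\vvht^k}=0$ there, so it suffices to pair, for every $\sigma=K|L\in\facesint^k$, the contribution seen from $K$ with the one seen from $L$, using $\bfn_{\sigma,K}=-\bfn_{\sigma,L}$. Writing $a=\vr_K^k$, $b=\vr_L^k$ and $w=\avg{\vvht^k\cdot\bfn_{\sigma,K}}$ (all constant on $\sigma$), convexity of $\Hc$ gives through the second-order Taylor remainder $\Hc'(a)(b-a)-\bigl(\Hc(b)-\Hc(a)\bigr)=-\frac12\Hc''(\zeta_1)\jump{\vrht^k}^2$ and, symmetrically from $L$, a term with $\Hc''(\zeta_2)$, where $\zeta_1,\zeta_2\in\co{a}{b}$; the two upwind coefficients then appear as the nonnegative weights $h^\varepsilon-[w]^-\ge 0$ and $h^\varepsilon-[-w]^-\ge 0$. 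Using the elementary identities $[-w]^-=-[w]^+$ and $[w]^+-[w]^-=|w|$, these weights sum to $2h^\varepsilon+|w|$, so the combined edge contribution is $\frac12\jump{\vrht^k}^2\bigl(\Hc''(\zeta_1)(h^\varepsilon-[w]^-)+\Hc''(\zeta_2)(h^\varepsilon-[-w]^-)\bigr)$; since the weights are nonnegative this is a convex-combination-type expression in the values of $\Hc''$, and the intermediate value theorem (applied to the continuous $\Hc''$ on the segment joining $\zeta_1$ and $\zeta_2$) collapses it to $\Hc''(\zeta)\,\jump{\vrht^k}^2\bigl(h^\varepsilon+\frac12|\avg{\vvht^k\cdot\bfn}|\bigr)$ for a single $\zeta\in\co{a}{b}$. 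Summing over $\sigma\in\edgesint^k$ gives the announced $D_2$, whose nonnegativity follows from $\Hc''>0$ on $(0,\infty)$ and $h^\varepsilon>0$.

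The only genuinely delicate point is this $D_2$ computation: tracking the in/out orientation carefully when the single-face sum $\intEhk$ replaces $\sumintKs$, and justifying the collapse of the two intermediate points $\zeta_1,\zeta_2$ into one. Everything else is either quoted (Lemma~\ref{lem_r1} and Lemma~\ref{lem_positive}) or a one-line Taylor expansion; a fully detailed write-up would naturally sit alongside the proof of Lemma~\ref{lem_r1} in the appendix.
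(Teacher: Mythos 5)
Your proof is correct and follows the paper's intended route exactly: set $B=\Hc$ in Lemma~\ref{lem_r1}, use $p=\vr\Hc'(\vr)-\Hc(\vr)$, and Taylor-expand each defect term to second order, reindexing the double sum $\sum_{K}\sum_{\sigma\in\partial K}$ into a single sum over interior faces for $D_2$ (boundary faces vanish by the condition $\jump{\vrht^k}_\sigma=0$ on $\edgesext$). Your careful derivation of $D_2$, including the convex-combination/intermediate-value argument to collapse $\zeta_1,\zeta_2$ to a single $\zeta\in\co{\vr_K^k}{\vr_L^k}$, supplies detail that the paper leaves implicit; and note that your $D_1$ naturally produces the factor $\mJ_k^{k-1}$ (i.e.\ $\mJkm$), consistent with Lemma~\ref{lem_r1} and the semi-discrete version, rather than the undefined $\mJ^k$ printed in \eqref{D1D2}, which appears to be a typo in the paper.
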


Analogously as the semi-discrete case, the fully discrete scheme \eqref{FS} (or \eqref{FSR}) dissipates the total energy. 
\begin{Theorem}[Energy stability of the fully discrete scheme \eqref{FS}]\label{thm_s2}  
Let $\left(\vrht^k, \vuht^k, \etaht^k\right)_{k=1}^{N_T}$ be a family of numerical solutions obtained by the scheme~\eqref{FS} (or \eqref{FSR}). Then for any $N=1,\ldots, N_T$ the energy is stable in the following sense 
\begin{equation*} 
\begin{aligned}
&   \int_{\Oht^N} E_f^{N}\dx  + \intS{ E_s^N}  + \TS \sum_{k=1}^N \intOhk{  \left(2\mu |\bfD( \vuht^k) |^2  + \lambda |\Divh \vuht^k |^2   \right)  }   + 2 \mu \TS \sum_{k=1}^N \intEh{\frac1h\jump{\vuht^k}^2}
\\&  \quad 
+ \frac{\TS^2 }2  \sum_{k=1}^N  \intS{ \left( |\delta_t \zht^{k} |^2+ \alpha \left| \Lap  \zht^{k} \right|^2 + \beta \left| \Gradh \zht^{k} \right|^2  \right) } 
+ \TS \sum_{k=1}^N \intOhk{\frac{\TS}{2} \vrht^{k-1} \circ \bfXkm \left| D_t \avc{ \vuht^{k}} \right|^2  } 
\\&  \quad 
+ \TS \sum_{k=1}^N (D_1 +D_2)
+ \TS \sum_{k=1}^N  \intEhk{ \left(  \frac12 \vrht^{k,up}   | \vvht^k \cdot \vc{n} |  + \hheps  \Ov{\vrht^{k}} \right) \jump{ \avc{ \vuht^{k}} }^2 } 
\\ &  =  \intOref{  E_f^{0} } + \intS{ E_s^0}  + \TS \sum_{k=1}^N  \intOk{\vrht^k \bfh^{k}\cdot \vuht^{k}} +  \TS \sum_{k=1}^N \intS{\gh^{k} \zht^{k}} 
\end{aligned}
\end{equation*}
where $D_1, \; D_2$ are given in \eqref{D1D2}, 
$ E_f^{k}=  \frac{1}{2} \vrht^k \left|\avc{ \vuht^{k}}\right|^2 + \Hc(\vrht^k)$ and  
$E_s^{k} =\frac12 (|\zht^{k}|^2 +  \alpha |\Lap  \etaht^{k}|^2 +  \beta |\Gradh \etaht^{k}|^2). $
\end{Theorem}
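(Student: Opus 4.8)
The plan is to mirror the proof of Theorem~\ref{thm_s1}, taking care of the three new features of the fully discrete scheme: the upwind stabilization of the convective flux, the Crouzeix--Raviart jump penalty $2\mu\intEh{\frac1h\jump{\vuht^k}\cdot\jump{\bfPsiht}}$, and the projection $\avc{\cdot}$ acting on the velocity in the momentum flux. First I would test the discrete continuity equation \eqref{FS_D} with $\varphiht=-\tfrac12\abs{\avc{\vuht^k}}^2$ and the coupled momentum--structure equation \eqref{FS_M} with the pair $(\bfPsiht,\psiht)=(\vuht^k,\zht^k)$; this pair is admissible because $\avg{\vuht^k}=\avg{\Pip{\zht^k}}$ on every $\sigma\in\Sh$ by construction. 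Adding the two identities produces, exactly as in the semi-discrete case, the groups: (i) the kinetic-energy time derivatives $\intOhk{D_t(\vrht^k\avc{\vuht^k})\cdot\vuht^k}-\tfrac12\intOhk{D_t\vrht^k\,\abs{\avc{\vuht^k}}^2}$ and the shell inertia $\intS{\delta_t\zht^k\,\zht^k}$; (ii) the convective contributions from $\Divup$; (iii) the pressure term $-\intOhk{p(\vrht^k)\Divh\vuht^k}$; (iv) the viscous and jump terms; (v) the elastic terms; (vi) the forcing $\intOhk{\vrht^k\bfh^k\cdot\vuht^k}+\intS{\gh^k\zht^k}$.

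Groups (i), (iv), (v) and (iii) are handled just like the terms $I_1+I_3+I_8$, $I_6$, $I_9$ and $I_5$ in the proof of Theorem~\ref{thm_s1}. For (i) I would use $a(a-b)=\tfrac{a^2-b^2}{2}+\tfrac{(a-b)^2}{2}$ together with the discrete Reynolds transport Lemma~\ref{lem_DRT}; since the density weight is transported along $\bfXkm$, this gives $\delta_t\intOhk{\tfrac12\vrht^k\abs{\avc{\vuht^k}}^2}+\tfrac\TS2\intOhk{\vrht^{k-1}\circ\bfXkm\abs{D_t\avc{\vuht^k}}^2}$ for the fluid and $\delta_t\intS{\tfrac12\abs{\zht^k}^2}+\tfrac\TS2\intS{\abs{\delta_t\zht^k}^2}$ for the shell. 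Group (iv) is immediate, since $\bfD(\vuht^k):\Gradh\vuht^k=\abs{\bfD(\vuht^k)}^2$ by symmetry, $\jump{\vuht^k}\cdot\jump{\vuht^k}=\jump{\vuht^k}^2$, and the $\lambda$-term equals $\abs{\Divh\vuht^k}^2$. Group (v) yields $\delta_t\intS{\tfrac\alpha2\abs{\Lap\etaht^k}^2+\tfrac\beta2\abs{\Gradh\etaht^k}^2}+\tfrac\TS2\intS{\alpha\abs{\Lap\zht^k}^2+\beta\abs{\Gradh\zht^k}^2}$ by the same identity. For the pressure term I would invoke the discrete internal energy balance of Corollary~\ref{lem_r11}, turning $-\intOhk{p(\vrht^k)\Divh\vuht^k}$ into $\delta_t\intOhk{\Hc(\vrht^k)}+D_1+D_2$ with $D_1,D_2\ge0$ as in \eqref{D1D2}.

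The main obstacle is the convective group (ii): in the semi-discrete setting these terms cancelled identically ($I_2+I_4=0$), while here one must perform the kinetic-energy renormalization of the upwind operator. Writing the numerical flux as ${\rm Up}[r,\vvht^k]=\Ov r\,\avg{\vvht^k\cdot\bfn}-\big(\hheps+\tfrac12\abs{\avg{\vvht^k\cdot\bfn}}\big)\jump r$, using the conservativity of the upwind flux and summing by parts over interior faces (the exterior faces drop out because $\avg{\vvht^k\cdot\bfn}=0$ there), and applying face by face the elementary identity $\bfa\cdot(\bfa-\bfb)-\tfrac12(\abs{\bfa}^2-\abs{\bfb}^2)=\tfrac12\abs{\bfa-\bfb}^2$ with $\bfa,\bfb$ the two traces of $\avc{\vuht^k}$, the combination
\[
\intOhk{\Divup(\vrht^k\avc{\vuht^k},\vvht^k)\cdot\vuht^k}-\tfrac12\intOhk{\Divup(\vrht^k,\vvht^k)\,\abs{\avc{\vuht^k}}^2}
\]
should reduce to the nonnegative face dissipation $\intEhk{\big(\tfrac12\vrht^{k,up}\abs{\vvht^k\cdot\bfn}+\hheps\Ov{\vrht^k}\big)\jump{\avc{\vuht^k}}^2}$ plus a remainder that is reabsorbed into the kinetic time-derivative of group (i). This is the discrete counterpart of the cancellation used in the continuous and semi-discrete settings and is the only genuinely technical point, which I would carry out in the Appendix in parallel with Lemma~\ref{lem_r1}. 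Collecting all contributions, multiplying by $\TS$ and summing over $k=1,\dots,N$, the $\delta_t(\cdot)$-terms telescope and give the asserted inequality (in fact an identity if the forcing on the right-hand side is kept exact).
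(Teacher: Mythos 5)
Your proof follows essentially the same lines as the paper's: same test functions $\varphiht=-\tfrac12\abs{\avc{\vuht^k}}^2$, $(\bfPsiht,\psiht)=(\vuht^k,\zht^k)$, same grouping of terms, the same invocation of Corollary~\ref{lem_r11} for the pressure, and the same face-by-face identity $\bfa\cdot(\bfa-\bfb)-\tfrac12(\abs{\bfa}^2-\abs{\bfb}^2)=\tfrac12\abs{\bfa-\bfb}^2$ for the upwind convective contribution. One small correction: since $\Divup(\cdot,\cdot)$ is constant on each $K$, the integral $\intK{\Divup(\vrht^k\avc{\vuht^k},\vvht^k)\cdot\vuht^k}$ already equals $\intK{\Divup(\vrht^k\avc{\vuht^k},\vvht^k)\cdot\avc{\vuht^k}}$, so the combination $I_2+I_4$ reduces exactly to the nonnegative face dissipation with no additional remainder to be reabsorbed into the kinetic time-derivative.
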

\begin{proof}
The proof is similar to the energy stability of the semi-discrete scheme, see Theorem~\ref{thm_s1}. We leave it to the appendix~\ref{A_thm_s2}. 
\end{proof}
\subsection{A-priori estimates}
Before proving the consistency of the scheme \eqref{FS} (or equivalently \eqref{FSR}) we derive some useful estimates. Due to the coherence of the argument we use the notation of Subsection~\ref{ssec:consistency}. In particular we use the same definition of the piecewise constant in time functions (as defined in~\eqref{Tspace}) and the piecewise constant domain $\Oht$.

Applying the modified Korn inequality~\eqref{Korn} and the Sobolev inequality~\eqref{Sobolev} to the energy estimates (Theorem~\ref{thm_s2}) and the definition of $D_1$ and $D_2$ (see \eqref{D1D2}) directly imply the following uniform bounds on the numerical solutions:
\begin{equation}\label{uniform_bounds_h}
\begin{split}
\norm{ \vrht }_{ L^{\infty}(0,T; L^\gamma(\Oht(\cdot)))} \leq c, \quad \norm{ \vrht |\vuht|^2}_{ L^{\infty}(0,T; L^1(\Oht(\cdot)))} \leq c, \\
 \norm{\bfD( \vuht) }_{  L^2(0,T; L^2(\Oht(\cdot)))}\leq c,  \quad \norm{\Divh \vuht }_{  L^2(0,T; L^2(\Oht(\cdot)))}\leq c, \quad 
 \int_0^T \intEh{\frac1h \jump{\vuht}^2} \leq c
 \\
 \norm{\Gradh \vuht }_{  L^2(0,T; L^2(\Oht(\cdot)))}\leq c,  \quad \norm{\vuht }_{  L^2(0,T; L^6(\Oht(\cdot)))}\leq c, \quad 
\norm{\zht }_{  L^\infty(0,T; L^2(\Sigma))}\leq c, \quad  
\norm{\Lap  \etaht }_{  L^\infty(0,T; L^2(\Sigma))} \leq c,
\\
\int_0^T \! \intEh{ \left(  \frac12 \vrht^{up}   | \vvht \cdot \vc{n} |  + \hheps  \Ov{\vrht}  \right) \jump{ \avc{ \vuht} }^2 } \leq c,
\;  
\int_0^T\! \intEh{ \Hc''(\zeta) \jump{  \vrht } ^2 \left( \hheps + \left|\avg{\vvht \cdot \bfn}\right| \right)  } \leq c, 
\\
\norm{ \vrht \Pim\vuht}_{ L^{\infty}(0,T; L^{\frac{2\gamma}{\gamma+1}}(\Oht(\cdot)))} \leq c,\quad 
\norm{ \vrht \vuht}_{ L^{2}(0,T; L^{\frac{6\gamma}{\gamma+6}}(\Oht(\cdot)))} \leq c.
 \end{split}
\end{equation}
 where $c$ depends on the external force $\bfh$ and $\gh$ as well as the initial data. Further, since the discretization of the displacement $\etaht$ is conformal we find for $\etaht$ that 
$     \norm{\Gradh  \etaht }_{  L^\infty(0,T; L^2(\Sigma))}\leq c$ and $\norm{  \etaht }_{  L^\infty(0,T; L^\infty(\Sigma))}\leq c.$ 
Moreover, by precisely the same argument as in Lemma~\ref{lem:apri} we find 
for all $\theta\in [0,\frac13)$ there exists a constant $C$ depending on the energy estimates and $\theta$, such that 
 $   \max_{k}\norm{\etaht^k(\bfr)-\etaht^{k-1}(\bfr)}_{L^\infty(\Sigma)}\leq C\tau^\theta$, 
which implies the following corollary by the very same argument as in the semi-discrete case.
\begin{Corollary}[Exclusion of self-touching]
\label{cor:maxtimeh}
Let $\tau^\theta\leq \frac{\delta_0}{C}$ and $\delta_1\geq 2\delta_0$. Then, if for some $k\in \{0,...,N_T\}$ we find that
$\inf_\sigma \etaht^k(\bfr)\geq \delta_1-H$, the $\etaht^{k+1}$ satisfies
$\inf_\sigma \etaht^k(\bfr)\geq \delta_1-\delta_0-H$.
Moreover, for every $\delta_0\in (0,H/2)$ there exists a $T_0$ just depending on the bounds of the energy inequality $H$, such that 
$\inf_{[0,T_0]\time \Sigma}\eta(t,\bfr)\geq \delta_0-H$.
\end{Corollary}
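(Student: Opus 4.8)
The statement is the fully discrete counterpart of Corollary~\ref{cor:maxtime} and Lemma~\ref{lem:Tsd}, and I would obtain it by repeating those two arguments with $\etah,\zh$ replaced by $\etaht,\zht$. The only inputs are the uniform bounds \eqref{uniform_bounds_h} -- in particular $\norm{\zht}_{L^\infty(0,T;L^2(\Sigma))}\leq c$, $\norm{\Gradh\etaht}_{L^\infty(0,T;L^2(\Sigma))}\leq c$ and $\norm{\Lap\etaht}_{L^\infty(0,T;L^2(\Sigma))}\leq c$ -- together with the fully discrete H\"older-in-time estimate $\max_k\norm{\etaht^k-\etaht^{k-1}}_{L^\infty(\Sigma)}\leq C\TS^\theta$ (valid for every $\theta\in[0,\tfrac13)$) established just above; the conformality of the shell discretization guarantees that $\etaht$ inherits the same Sobolev embeddings as in the continuous case.

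For the prolongation step I would argue pointwise: fixing $\bfr\in\Sigma$,
\[
\etaht^{k+1}(\bfr)=\etaht^k(\bfr)+\big(\etaht^{k+1}(\bfr)-\etaht^k(\bfr)\big)\geq\inf_{\Sigma}\etaht^k-\norm{\etaht^{k+1}-\etaht^k}_{L^\infty(\Sigma)}\geq(\delta_1-H)-C\TS^\theta,
\]
and since $\TS^\theta\leq\delta_0/C$ and $\delta_1\geq2\delta_0$ this gives $\inf_{\Sigma}\etaht^{k+1}\geq\delta_1-\delta_0-H\geq\delta_0-H$.

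For the minimal-time step I would \emph{not} telescope the per-step bound -- summing $C\TS^\theta$ over the $\sim T_0/\TS$ steps up to time $T_0$ is not uniform in $\TS$ -- but instead use that $\etaht^0\equiv0$ and $\zht^k=\delta_t\etaht^k$ to write $\etaht^N=\TS\sum_{k=1}^N\zht^k$. Fixing $\bfr\in\Sigma$, extending the $\etaht^k$ by zero, and choosing $r\in(0,1)$, I split
\[
|\etaht^N(\bfr)|\leq\Big|\etaht^N(\bfr)-\dashint_{B_r(\bfr)}\etaht^N(\bfy)\,d\bfy\Big|+\Big|\dashint_{B_r(\bfr)}\etaht^N(\bfy)\,d\bfy\Big|.
\]
The first term is $\leq Cr^\alpha$ for any $\alpha\in(0,1)$: the bounds above make $\etaht^N$ uniformly (in $\TS,h$) bounded in $W^{2,2}(\Sigma)\hookrightarrow C^\alpha(\Sigma)$. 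For the second term, linearity of the average, Cauchy--Schwarz on the ball, and $N\TS\leq T_0+\TS\leq 2T_0$ give
\[
\Big|\dashint_{B_r(\bfr)}\etaht^N(\bfy)\,d\bfy\Big|\leq Cr^{-(d-1)/2}\,\TS\sum_{k=1}^N\norm{\zht^k}_{L^2(\Sigma)}\leq Cr^{-(d-1)/2}\,N\TS\,\norm{\zht}_{L^\infty(0,T;L^2(\Sigma))}\leq Cr^{-(d-1)/2}\,T_0.
\]
Hence $|\etaht^N(\bfr)|\leq Cr^\alpha+Cr^{-(d-1)/2}T_0$ with $C$ independent of $\TS$ and $h$. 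Given $\delta_0\in(0,H/2)$ I first pick $r$ so small that $Cr^\alpha\leq\tfrac12(H-\delta_0)$ and then $T_0$ so small that $Cr^{-(d-1)/2}T_0\leq\tfrac12(H-\delta_0)$; since $\etaht(t)=\etaht^k$ on $I^k$ this yields $\inf_{[0,T_0]\times\Sigma}\etaht\geq\delta_0-H$.

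The genuinely delicate step is this last estimate. The energy controls $\zht$ only in $L^\infty_tL^2_\bfr$, so on a short interval $\etaht^N$ is small merely in $L^2(\Sigma)$; to upgrade this to smallness in $L^\infty(\Sigma)$ one must interpolate it against the $\TS,h$-uniform $C^\alpha$ bound coming from $\norm{\Lap\etaht}_{L^\infty(0,T;L^2(\Sigma))}$, which forces one to hold the averaging radius $r$ fixed while $T_0\downarrow0$. The spatial discretization adds nothing new here, precisely because the shell mesh is conformal and the mesh-regularity constants underlying \eqref{uniform_bounds_h} are independent of $h$ and $\TS$.
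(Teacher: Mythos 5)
Your proof is correct, and the two halves split rather differently in how closely they track the paper.

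The inductive prolongation step is, as you say, exactly the argument of Corollary~\ref{cor:maxtime} transplanted to $\etaht$: a single application of the one-step bound $\norm{\etaht^{k+1}-\etaht^k}_{L^\infty(\Sigma)}\leq C\tau^\theta$ together with $\tau^\theta\leq\delta_0/C$ and $\delta_1\geq 2\delta_0$. Nothing to add.

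For the minimal-time step you take a genuinely different route from what the paper's Lemma~\ref{lem:Tsd} writes down. The paper presents the bound on $\etah^N-\etah^0$ as a telescoped sum $C\sum_{i=0}^N\tau^\theta$ followed by an appeal to Jensen's inequality, which as a literal sum is $C(N+1)\tau^\theta\sim CT_0\,\tau^{\theta-1}$ and is not uniform in $\tau$ since $\theta<1$; you correctly flag this. What actually makes Lemma~\ref{lem:Tsd} work is a \emph{single} application of the ball--averaging argument from Lemma~\ref{lem:apri} to the multi-step difference $\etah^N-\etah^0=\tau\sum_{k=1}^N z_\tau^k$, with the radius $r$ chosen as a power of $N\tau$ rather than of $\tau$, giving $\norm{\etah^N-\etah^0}_{L^\infty}\leq C(N\tau)^\theta\leq C(T_0+\tau)^\theta$. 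Your proof is precisely this idea, just executed with a fixed (non-optimized) radius $r$ and with Cauchy--Schwarz on the ball (hence $r^{-(d-1)/2}\norm{\cdot}_{L^2}$ rather than $r^{-(d-1)}\norm{\cdot}_{L^1}$). Optimizing $r$ the way the paper implicitly does yields an explicit modulus $T_0^{\theta}$; your non-optimized choice is a little weaker quantitatively but yields the same qualitative conclusion and is cleaner to state. Both rely on the same two inputs, which you identify correctly: the $\tau$- and $h$-uniform $W^{2,2}(\Sigma)\hookrightarrow C^\alpha(\Sigma)$ bound (valid for the zero-extension because of the clamped boundary conditions on $\etaht$) and the energy bound $\norm{\zht}_{L^\infty(0,T;L^2(\Sigma))}\leq c$. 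The final observation, that holding $r$ fixed while $T_0\downarrow0$ is the right order of quantifiers, is exactly the point; it is what makes the argument uniform in $\tau,h$.

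One very minor remark: the conclusion of the inductive step in the statement reads $\inf_\sigma\etaht^k(\bfr)\geq\delta_1-\delta_0-H$, which is clearly a typo for $\etaht^{k+1}$; your proof correctly derives the bound for $\etaht^{k+1}$.
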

From the above and the $L^\infty$ bound of $\etaht$, we may assume in the following that there exist two positive constants $\delta_2> \delta_1 >0$ such that  
\begin{equation}\label{est_J}
  0<  \delta_1 \leq  \mJik = \frac{\etaht + H}{H} \leq \delta_2  .
\end{equation}
\begin{Remark}
Note the uniform upper and lower bounds on the Jacobian~\eqref{est_J} imply that all uniform bounds  in Lebesgue spaces appeared in this paper hold both on the reference domain and the current time-dependent domain. 
We emphasize this fact in the following by denoting  $L^p L^{q}$, $L^q$ for the norms  $L^p(0,T;L^q(\Oht(\cdot))$.

Moreover, by the same reasoning all estimates on integrals over the jumps, as well as on area-integrals that have been shown on the reference mesh  are also valid on the push forwarded mesh. 
\end{Remark}

In the following we collect some estimates that we need for the consistency proof. 
First, due to the fact that the trace theorem for $\vuht$ is not available we have to use a different estimate on $\vwht$.
First by the $L^2$-Stability of $\Pi_p$ we find $\norm{\vwht}_{L^\infty L^{2} }$ uniformly bounded. Second, since the projection $\Pi_p$ is conformal we actually do have a proper gradient of $\vwht$ and may interpolate to find
\begin{align*}
& \norm{\nabla \vwh^k}_{L^2(\Oh)}^2 \approx \norm{\widehat{\nabla} \hvwh^k}_{L^2(\Ohref)}^2 \leq c \int_{\Sigma}\abs{\nabla \Pip {\zht^k}}^2 \, d\bfr+ c\int_{\Sigma}\abs{ \Pip{\zht^k} }^2 d\bfr
\leq  \frac{c}{\tau^2} \norm{\nabla(\etaht^k-\etaht^{k-1})}_{L^2}^2 + c\norm{\zht^k}_{L^2}^2
\\
&= -\frac{c}{\tau^2}\int_{\Sigma}\Delta(\etaht^k-\etaht^{k-1})\cdot (\etaht^k-\etaht^{k-1}) + c\norm{\zht^k}_{L^2}^2
\leq \frac{c}{\tau} (\norm{\Delta \etaht^k}_{L^2}+\norm{\Delta \etaht^{k-1}}_{L^2})\norm{\zht^k}_{L^2}+ c\norm{\zht^k}_{L^2}^2
\end{align*}
But this implies by Sobolev embedding (using the fact that $\hvwht\equiv 0$ on $\Gamma_D$) that
\begin{align}
\label{eq:west}
    \norm{\vwht^k}_{L^6(\Oht^k)}\sim \norm{\hvwht^k}_{L^6(\Ohref)}\leq c \norm{\widehat{\nabla} \hvwht^k}_{L^2(\Ohref)}\aleq \tau^{-\frac{1}{2}}.
\end{align}
Interpolation (H\"older's inequality) implies for $\beta\in[0,1]$ and $q=6\beta+2(1-\beta)$
that
\[
 \norm{\vwht}_{L^\infty(0,T;L^q(\Oht))}\sim \norm{\hvwht^k}_{L^\infty (0,T;L^q(\Ohref))}
 \aleq \tau^\frac{-\beta}{2}.
\]
Further, from the definition of the $\vwht$ \eqref{proj_ALE} and the uniform lower and upper bounds of $\etaht$, we notice
\begin{equation}
\norm{\Divh \vwht }_{  L^\infty L^2 } 
= \norm{\frac{\zht}{H+\etaht} }_{  L^\infty L^2 } 
= H \norm{\frac{\zht}{H+\etaht} }_{  L^\infty (0,T; L^2(\Sigma))} 
\aleq c 
\end{equation}

Seeing $\vvht=\vuht -\vwht$, we discover the bound on $\vvht$  that 
\begin{equation}\label{est_v}
    \norm{\Divh \vvht }_{  L^2 L^2}\leq c.  
\end{equation}

The next lemma collects some estimates related to the errors that appear due to the convective terms. The proof of these estimates goes along the techniques developed by the community of the numerics for compressible fluids. Please consider the appendix~\ref{Apue} for a complete proof.
\begin{Lemma}[Useful estimates]\label{lemBs} Let $c>0$ be a constant independent of the parameters $\TS$ and $h$ (may depend on the initial data, the external force $\bff$ and $g$ and the mesh regularity).
\begin{enumerate}
\item Let $\vrht, \vvht$ be a solution of \eqref{FS_D} with $h \in(0,1)$ and satisfy the estimates 
\[ \norm{\vrht}_{L^\infty L^\gamma} \leq c, \quad \quad \norm{ \vrht |\Pim \vuht|^2}_{ L^{\infty}L^1 } \leq c, \quad 
\hheps \int_0^T \intEhk{ \Hc''(\zeta) \jump{  \vrht^{k} } ^2    } \leq c.
\]
Then the following holds
\begin{equation}\label{lemB0}
\norm{\vrht}_{L^2 L^2 } \leq c h^{-\frac{\varepsilon+2}{2\gamma}} \mbox{ and } \norm{\vrht \Pim \vuht}_{L^2 L^2 } \leq c h^{-\frac{\varepsilon+2}{2\gamma}} .
\end{equation}
\item Let $\vrht, \vvht$ be a solution of \eqref{FS_D} with $\gamma \geq 2$ and satisfies the estimates 
\[ \norm{\vrht}_{L^\infty  L^\gamma} \leq c, \quad 
\norm{\Divh \vvht}_{L^2 L^2 } \leq c.
\]
Then the following holds
\begin{equation}\label{lemB1}
\int_0^T  \sumEKh{ \frac{ \jump{\vrht}^2}{\max{\{\vrht^{\rm in}, \vrht^{\rm out}\}}}   \abs{\avg{\vvht \cdot \vc{n} }} }  \dt  \leq c .
\end{equation}
\item Let $\vrht, \vuht, \vwht$ satisfy the estimates in \eqref{uniform_bounds_h}. Then the following hold
\begin{subequations}\label{con_rm}
\begin{equation} \label{con_r}
    \int_0^T  \sumEKh{  \abs{ \jump{\vrht}  \avg{\vvht \cdot \vc{n} }^-} }  \dt  \leq 
c \TS^{-\frac14} h^{\theta} , 
\end{equation}
\begin{equation}\label{con_m}
\int_0^T  \sumEKh{  \abs{ \jump{\vrht}\Pim \vuht  \avg{\vvht \cdot \vc{n} }^-} }  \dt  \leq 
c  \TS ^{-1/4} \; h^\zeta,
\end{equation}
\end{subequations}
where 
\[ 
\theta = \begin{cases} -\frac12 & \mbox{if } \gamma \geq \frac65, \\
\frac{3\gamma-6}{4\gamma}& \mbox{if } \gamma \in (1, \frac65), \\
\end{cases}
\quad 
\zeta= \begin{cases} -\frac12 & \mbox{if } \gamma \geq \frac43, \\
\frac{7\gamma-12}{4\gamma}  & \mbox{if } \gamma \in (1,\frac43) .
\end{cases}
\]
\item Let $r, F \in \Qh(\Tht), \vv \in \Vh(\Tht)$ and $\phi \in C^1(\Tht)$. Then it holds
\begin{equation}\label{Lem_up}
\begin{aligned}
&\intO{r \vv \cdot \Grad \phi} 
=- \sumK \intK{ F  \Divup[r, \vv ]  }  + \sum_{i=1}^4 E_i(r), \quad  \mbox{where: }\\
& E_1(r)= \sumEKh{ (F-\phi) \jump{r}  \avg{\vv \cdot \vc{n} }^- } ,
\; 
E_2 (r)= \sumEKh{ \phi r  \big( \vv \cdot \vc{n}  - \avg{\vv \cdot \vc{n} } \big)    },
\\& 
E_3 (r)= \intO{r (F- \phi) \Divh \vv } , \quad 
E_4(r) =  \hheps \intEh{ \jump{r} \jump{F}  } .
\end{aligned}
\end{equation}
\end{enumerate}
\end{Lemma}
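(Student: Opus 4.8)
The four assertions are proved independently, each by the standard finite-volume bookkeeping, so the plan is to treat them in order. For \eqref{lemB0}, I would combine the inverse estimate~\eqref{inv_est} with the bound on the artificial-diffusion term from the energy estimate. Concretely, $\norm{\vrht}_{L^2L^2}$ is controlled on each time slice by writing $\vrht = \avg{\vrht} + (\vrht - \avg{\vrht})$ where the average is taken over pairs of neighbouring cells; the oscillation part is estimated by $\sum_\sigma h^{d-1}\jump{\vrht}^2$, which after multiplying by the appropriate power of $h$ and using $\Hc''(\zeta)\gtrsim \zeta^{\gamma-2}$ together with the $L^\infty L^\gamma$-bound on $\vrht$ is absorbed into $\hheps\int_0^T\intEhk{\Hc''(\zeta)\jump{\vrht}^2}\le c$; the averaged part is controlled by $\norm{\vrht}_{L^\infty L^\gamma}$ via~\eqref{inv_est} with the stated exponent $h^{-(\varepsilon+2)/(2\gamma)}$. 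The factor $\Pim\vuht$ in the second estimate of \eqref{lemB0} is handled the same way using $\norm{\vrht|\Pim\vuht|^2}_{L^\infty L^1}\le c$ in place of the density bound. For \eqref{lemB1}, under $\gamma\ge 2$ one has $\Hc''\ge c>0$, so $\jump{\vrht}^2\big/\max\{\vrht^{\rm in},\vrht^{\rm out}\}\le c\,\Hc''(\zeta)\jump{\vrht}^2\big/\max\{\vrht^{\rm in},\vrht^{\rm out}\}$; I would then use the elementary inequality relating $\jump{\vrht}^2/\max\{\vrht^{\rm in},\vrht^{\rm out}\}$ to the jump of a suitable power of $\vrht$ (this is the usual ``pressure-potential'' trick, cf.\ \cite{Karper}) and close using $\norm{\vrht}_{L^\infty L^\gamma}\le c$ and $\norm{\Divh\vvht}_{L^2L^2}\le c$ after integration by parts on $\intEhk{\cdots\avg{\vvht\cdot\bfn}}$.

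The substantive part is \eqref{con_rm}. For \eqref{con_r}, I would split $\avg{\vvht\cdot\bfn}^- = \frac12(\avg{\vvht\cdot\bfn} - |\avg{\vvht\cdot\bfn}|)$ and bound $\sum_\sigma\intsh{|\jump{\vrht}\,\avg{\vvht\cdot\bfn}^-|}$ by Cauchy--Schwarz against the two quadratic quantities already controlled: the artificial-diffusion term $\hheps\int\intEhk{\Hc''(\zeta)\jump{\vrht}^2}$ and the dissipative convective term $\int\intEhk{\vrht^{\rm up}|\vvht\cdot\bfn|\jump{\vuht}^2}$ — more precisely one pairs $\jump{\vrht}\sqrt{|\avg{\vvht\cdot\bfn}|}$ with $\sqrt{|\avg{\vvht\cdot\bfn}|}$ and then estimates the remaining $\sum_\sigma h^{d-1}|\avg{\vvht\cdot\bfn}|$ by $\norm{\vvht}_{L^pL^q}$ using the trace/inverse estimates~\eqref{eq:invtrace},~\eqref{inv_est} and the bound~\eqref{eq:west} on $\vwht$ (this is where the negative power $\TS^{-1/4}$ enters, through $\norm{\vwht}\lesssim\tau^{-1/2}$). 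The case distinction in $\theta$ and $\zeta$ reflects exactly the regime where $\norm{\vrht}_{L^\infty L^\gamma}$ alone no longer gives an $h$-positive power, forcing one to pay with the inverse estimate~\eqref{lemB0}; tracking the exponents in dimension $d=3$ via Sobolev~\eqref{Sobolev} and Hölder is the bookkeeping that produces the stated $\gamma$-thresholds $6/5$ and $4/3$. Estimate \eqref{con_m} is the same computation with an extra factor $\Pim\vuht$, absorbed by Hölder against $\norm{\vuht}_{L^2L^6}\le c$ together with the second bound in~\eqref{lemB0}.

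Finally, \eqref{Lem_up} is an algebraic identity: starting from $\intO{r\vv\cdot\Grad\phi}$ one integrates by parts cellwise to get $-\sumK\intK{\phi\,\Div(r\vv)} + \text{boundary}$, then rewrites $\sumK\intK{\phi\,\Div(r\vv)}$ over faces, compares with the upwind flux $\Divup[r,\vv]$ tested against $F$, and collects the four remainders: $E_1$ from replacing $F$ by $\phi$ in the upwind correction, $E_2$ from replacing the exact normal flux by its average, $E_3$ from the non-solenoidality of $\vv$, and $E_4$ from the artificial-diffusion term in the definition~\eqref{def_divup} of $\Divup$. No analysis is needed here beyond careful signs and the conservativity $\bfn_{\sigma,K}=-\bfn_{\sigma,L}$.

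The main obstacle is the exponent arithmetic in \eqref{con_rm}: one must simultaneously use the energy dissipation (which gives $h^\varepsilon$ and $|\vvht\cdot\bfn|^{1/2}$ weights), the inverse estimate~\eqref{inv_est} in both time and space, the $\TS^{-1/2}$-blowup of $\vwht$ from~\eqref{eq:west}, and the Sobolev embedding~\eqref{Sobolev} in a way that yields a strictly positive power of $h$ for every $\gamma>1$; getting the two piecewise-defined exponents $\theta$ and $\zeta$ exactly right — and in particular verifying they are $\ge 0$ precisely at $\gamma=6/5$ and $\gamma=4/3$ respectively — is the delicate point, and is the only place where the restriction $\gamma>6/5$ of Theorem~\ref{Thm_C2} ultimately originates.
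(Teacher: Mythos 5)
Your Item~4 sketch (the algebraic identity \eqref{Lem_up}) is fine, and for Items 1 and 2 the paper simply cites \cite[Lemma 3.5]{FLMS_NS} and \cite[Lemma 4.3]{GallouetIMA}; note however that your remark ``under $\gamma\ge 2$ one has $\Hc''\ge c>0$'' is false, since $\Hc''(\vr)=a\gamma\vr^{\gamma-2}\to 0$ as $\vr\to 0$ for $\gamma>2$, which is precisely why \eqref{lemB1} is formulated with the weight $1/\max\{\vrht^{\rm in},\vrht^{\rm out}\}$ rather than with $\Hc''(\zeta)$.

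The substantive gap is in Item~3, the only part the paper proves in full. You propose to Cauchy--Schwarz $\abs{\jump{\vrht}\,\avg{\vvht\cdot\bfn}^-}$ against ``the artificial-diffusion term $\hheps\int_0^T\intEhk{\Hc''(\zeta)\jump{\vrht}^2}$ and the dissipative convective term $\int_0^T\intEhk{\vrht^{\rm up}|\vvht\cdot\bfn|\jump{\avc{\vuht}}^2}$.'' Neither choice closes: the second carries $\jump{\avc{\vuht}}$, not $\jump{\vrht}$, so it cannot control the density jump; and the first is only the $\hheps$-weighted half of the dissipation, which lacks the $|\avg{\vvht\cdot\bfn}|$-weight you need to cancel the one on the left. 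The indispensable ingredient is the other half of the bound in \eqref{uniform_bounds_h}, namely $\int_0^T\intEh{\Hc''(\zeta)\jump{\vrht}^2\,|\avg{\vvht\cdot\bfn}|}\le c$, together with two algebraic devices you do not mention: for $\gamma\in(1,2)$ the pointwise inequality $\Hc''(r)(1+r)\ge a$, which lets one insert and remove $\sqrt{\Hc''}$ so that the Cauchy--Schwarz companion $\int\intEh{(1+\vrht^\dagger)\,|\avg{\vvht\cdot\bfn}|}$ is estimable from $\norm{\vrht}_{L^\infty L^\gamma}$ and $\norm{\vvht}_{L^\infty L^6}\aleq\tau^{-1/2}$; and, for \eqref{con_m}, the splitting $\jump{\vrht}^2\le \jump{\vrht^{\gamma/2}}^2\,\big(\Ov{\vrht^{1-\gamma/2}}\big)^2$, not the crude $L^2L^2$ bound \eqref{lemB0} you suggest. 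For $\gamma\ge2$ one bypasses $\Hc''$ entirely by invoking \eqref{lemB1}. Finally, a small but telling misstatement: $\theta$ and $\zeta$ are not ``$\ge 0$ precisely at $\gamma=\frac65,\frac43$'' — indeed $\theta=-\frac12$ on $[\frac65,\infty)$; the thresholds only mark where the piecewise formula switches, and what must be positive is $1+\theta$ and $1+\zeta$, after the extra factor $h$ coming from the interpolation error $\abs{\Pim\varphi-\varphi}\aleq h$ in the consistency proof is attached.
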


\subsection{Consistency}
With the a-priori estimates derived in the last subsection, we are ready to show the consistency of the fully discrete scheme \eqref{FS} (or equivalently \eqref{FSR}). For the momentum equation we have to introduce the $\epsilon$-layer again. 
\begin{Theorem}[Consistency of the fully discrete scheme \eqref{FS}]\label{Thm_C2}    
Let $(\vrht, \vuht, \etaht)$ be the numerical solution of the scheme \eqref{FS} with $\TS \approx h$, $\gamma>\frac65$ and $\varepsilon \in(0, 2(\gamma-1))$. Then for any 
$\varphi \in C^2_0(0,T;\mathbb{R}^d)$ we have 
\begin{equation}\label{consistency_den}
- \intO{\vrht^0 \varphi^0}  - \int_0^T \intOt{t}{ \left( \vrht  \pdt \varphi +  \vrht \vuht \cdot \Grad \varphi \right) }  \aleq 
\seb{\order(\heps)}.
\end{equation}
If moreover, $\etaht\rightarrow \eta$ in $C^\alpha([0,T]\times \Sigma)$ (for some $\alpha\in (0,1)$), then there exists a positive $\vartheta$ such that for all
pairs $(\Psi,\psi)  \in C^2_0(0,T\times \mathbb{R}^d) \times C^2_0([0,T]\times\Sigma)$ as constructed in \eqref{eq:test function} we have uniformly in $\epsilon$ that for all $\tau\leq \frac12\epsilon$ and $\Psie$ satisfying \eqref{eq:epsilonlayer} that
\begin{multline} \label{consistency_mom}
-\intO{\vrht^0 \vuht^0 \cdot \Psie^0}  - 
\int_0^T \intOt{t}{ \left( \vrht  \vuht \cdot \pdt \Psie + \vrht \vuht\otimes \vuht : \Grad \Psie \right) } \\
+ \mu \int_0^T \intOt{t}{ \Grad \vuht^{k} : \Grad \Psie }   
+ \left( \mu + \lambda \right) \intOt{t}{ \Div \vuht^{k} \Div \Psie } 
-  \intOt{t}{ p(\vrht) \Div \Psie } \dt 
\\
-\intS{\pdt \eta(0) \psi^0} - \int_0^T \intS{\delta_t \etaht \pdt \psi } +  \intS{K'(\etaht) \psi} \dt
 - \int_0^T \intS{ \gh \psi }\dt  -   \int_0^T \intOt{t}{\bfh \cdot \Psie}\dt 
 \aleq  \order(h^\vartheta).
\end{multline}
\end{Theorem}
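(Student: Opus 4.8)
The plan is to follow the blueprint of the semi-discrete consistency proof (Theorem~\ref{Thm_C1}), decomposing the residual of \eqref{FS_D}--\eqref{FS_M} into a time-discretization error — to be treated verbatim as there — and a new space-discretization error, to be controlled with the finite-element toolbox of Subsection~\ref{ssec:discret} together with the estimates of Lemma~\ref{lemBs}. As in Theorem~\ref{Thm_C1}, the first step is to render the smooth test functions admissible: I would test \eqref{FS_D} with $\varphiht=\Pit{\Pim{\varphi}}$ and test \eqref{FS_M} with $\psiht=\Pit{\Pip{\psi}}$ and a $\bfPsiht$ obtained by applying $\Pie$ to the $\epsilon$-layer function $\Psie$ from \eqref{eq:epsilonlayer} and then correcting it on the top strip so that the discrete coupling $\Pie{\bfPsiht\circ\ALEhtk}=\Pie{\Pip{\psiht}}\bfe_d$ holds exactly. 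Since $\Psie$ is constant in $x_d$ and equal to $\psi\bfe_d$ on the $\epsilon$-neighbourhood of $\Gamma_S$ and $\TS\leq\frac12\epsilon$, this correction affects only that strip and costs $\order(h)+\order(\epsilon^\alpha)$, which by uniformity in $\epsilon$ is admissible. As noted at the start of the proof of Theorem~\ref{Thm_C1}, testing against $\Pit{\phi}$ is the same as testing against $\phi$ for piecewise-constant-in-time integrands, so only the spatial projections and the time-difference operators produce genuine errors.

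For the continuity equation no $\epsilon$-layer is needed. I would handle $\intOk{D_t\vrht^k\varphiht}$ exactly as in the derivation of \eqref{CT} with $\Psie$ replaced by $\varphi$: Taylor expansion in time, together with the splitting $D_t r^k=D_t^\ALE r^k+\Div\vwht^k\,r^{k-1}\circ\bfXkm$, produces $-\intO{\vrht^0\varphi^0}-\int_0^T\intOt{t}{\vrht\pdt\varphi}+\int_0^T\intOt{t}{\vrht\vwht\cdot\Grad\varphi}$ plus an ALE-remainder $\mathcal{R}^k$ which, as in Theorem~\ref{Thm_C1}, is $\order(\TS^\vartheta)$ by \eqref{eq:aprir2} and the Hölder continuity of $\etaht$ recorded before Corollary~\ref{cor:maxtimeh}. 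For the upwind term I would invoke \eqref{Lem_up} with $F=\Pim{\varphi}$, $r=\vrht^k$, $\vv=\vvht^k$: the leading term cancels the tested flux, while $E_1,\dots,E_4$ are bounded using $\norm{\Pim{\varphi}-\varphi}_{L^\infty}\aleq h$, the convective-jump estimate \eqref{con_r}, the bound $\norm{\Divh\vvht}_{L^2L^2}\leq c$ of \eqref{est_v}, and — for $E_4=\hheps\intEh{\jump{\vrht}\jump{\Pim{\varphi}}}$ — Cauchy--Schwarz with $\jump{\Pim{\varphi}}\aleq h$ and the density-jump bound in \eqref{uniform_bounds_h}; since $\varepsilon\in(0,2(\gamma-1))$ all of these are $\order(h^\vartheta)$. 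Combining with the $\vwht$-term and $\vvht=\vuht-\vwht$ reconstructs $\int_0^T\intOt{t}{\vrht\vuht\cdot\Grad\varphi}$, and with $\TS\approx h$ the dominant contribution is the interpolation error, which gives \eqref{consistency_den}.

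For the momentum and structure equation the shell contributions $\intS{\delta_t\zht^k\psiht}$ and $\intS{\Lap\etaht^k\Lap\psiht}$ (and the $\beta$-term) reduce to their continuous counterparts by the computation \eqref{consistency_shell} — which concerns only the backward difference in time and uses conformality of $\Wspace$ — up to the interpolation error $\norm{\Pip{\psi}-\psi}_{W^{2,2}}$. The inertial and convective fluid terms, now with $r=\vrht^k\avc{\vuht^k}$, I would treat exactly as in the density case: the time part reuses \eqref{CT} (genuinely with $\Psie$, so $\Psie^0$ and $\pdt\Psie$ appear), the ALE-remainder forcing $\gamma>\frac65$ through \eqref{eq:aprir2} and Lemma~\ref{lem:apri}; the upwind part uses \eqref{Lem_up} with $F=\Pim{\Pie{\Psie}}$, the errors being controlled via \eqref{con_m}, the interpolation estimates \eqref{IP}, and \eqref{lemB0}--\eqref{lemB1}, which is exactly where $\varepsilon\in(0,2(\gamma-1))$ and $\TS\approx h$ are needed to absorb negative powers of $h$ and $\TS$. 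For the viscous terms, integration by parts on each $K$ together with $\jump{\Pie{\Psie}}$-estimates rewrites $2\mu\intOk{\bfD(\vuht^k):\Gradh\bfPsiht}+2\mu\intEh{\frac1h\jump{\vuht^k}\cdot\jump{\bfPsiht}}+\lambda\intOk{\Divh\vuht^k\Divh\bfPsiht}$ as $\mu\int_0^T\intOt{t}{\Grad\vuht:\Grad\Psie}+(\mu+\lambda)\int_0^T\intOt{t}{\Div\vuht\Div\Psie}$ up to $\order(h)\norm{\Gradh\vuht}_{L^2L^2}\norm{\Psie}_{C^2}$; the pressure term becomes $-\int_0^T\intOt{t}{p(\vrht)\Div\Psie}$ up to $\norm{\Div(\Pie{\Psie}-\Psie)}_{L^\infty}\norm{p(\vrht)}_{L^\infty L^1}\aleq ch$, and the sources match up to $\norm{\Pie{\bff}-\bff}$ and $\norm{\Pie{g}-g}$. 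Summing and taking $\TS\approx h$ yields \eqref{consistency_mom} with $\vartheta>0$ the minimum of the exponents from \eqref{con_m}, the $\mathcal{R}^k$-bound and the interpolation rates.

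The hard part will be the convective analysis under the sole strength of the energy a-priori bounds: the products $\vrht\vuht$ are barely integrable — only $\gamma>\frac65$ makes the right-hand sides of \eqref{eq:aprir}--\eqref{eq:aprir2} finite in $d=3$ — so both the upwind error terms (Lemma~\ref{lemBs}, especially \eqref{con_m}) and the ALE-remainder $\mathcal{R}^k$ close only when the artificial-diffusion scaling $\varepsilon\in(0,2(\gamma-1))$ and the mesh coupling $\TS\approx h$ are exploited to balance the negative powers of $h$ and $\TS$ from the inverse estimates \eqref{inv_est}, \eqref{eq:invtrace} against the positive powers from interpolation \eqref{IP}, \eqref{interpolation}. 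A secondary difficulty is producing a discrete test pair $(\bfPsiht,\psiht)$ satisfying the discrete coupling condition exactly while staying $\order(h)$-close to $(\Psie,\psi)$ uniformly in $\epsilon$; this is why one first mollifies into the $\epsilon$-layer, then applies the two-level projection ($\Pip$ for the geometry carried by $\Th$, $\Pie$ for the velocity test function), and finally checks that the top-strip correction spoils none of the a-priori bounds.
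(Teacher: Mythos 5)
Your proposal follows essentially the same approach as the paper's proof: split the residual into time-discretization errors (reuse \eqref{CT} and \eqref{consistency_shell} from the semi-discrete proof) and space-discretization errors (use \eqref{Lem_up} to produce $E_1,\dots,E_4$ and bound them with \eqref{con_r}, \eqref{con_m}, \eqref{lemB0}--\eqref{lemB1}, \eqref{IP}, \eqref{interpolation}), identify correctly where $\gamma>\frac65$, $\varepsilon\in(0,2(\gamma-1))$ and $\TS\approx h$ enter, and handle the $\epsilon$-layer test function via \eqref{eq:epsilonlayer}. Two small deviations from the paper are worth noting. First, the paper obtains an \emph{exact} cancellation for the pressure and symmetric-gradient terms (because $\pht$ and $\bfD(\vuht)$ are piecewise constant and $\Pie$ preserves face averages, so $\intO{\pht \Divh \Pie \Psie}=\intO{\pht \Divh \Psie}$, etc.), whereas you estimate an $\order(h)$ error via $\norm{\Div(\Pie\Psie-\Psie)}$; both give the theorem, but the exact cancellation is cleaner and avoids an unnecessary appeal to second-order interpolation. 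Second, for $E_4$ you propose Cauchy--Schwarz against the renormalization dissipation bound in \eqref{uniform_bounds_h}; for $\gamma<2$ the weight $\Hc''(\zeta)$ is not bounded below if $\vrht$ is large, so this route is delicate -- the paper's more elementary estimate $\abs{\jump{\vrht}}\leq 2\Ovg{\vrht}$ (using $\vrht\geq 0$ from Lemma~\ref{lem_positive}) followed by the trace estimate on piecewise constants is safer. Finally, your instinct to correct $\Pie\Psie$ on the top strip so the discrete coupling $\Pie{\bfPsiht\circ\ALEhtk}=\Pie{\Pip\psiht}\bfe_d$ holds exactly is sound and in fact slightly more careful than what the paper spells out (the paper writes $\psiht=\Pie\psi$, which is not in $\Wspace$; a conforming $\Wspace$-interpolant of $\psi$ with the matching top-strip correction on $\bfPsiht$ is what is really required). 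These are presentation-level refinements; the substance of the argument coincides with the paper's.
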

\begin{proof}
To show the consistency of the numerical scheme, we take $\Psieh = \Pim \Psie$ and the pair $( \Psieh ,  \psiht )= (\Pie \Psie , \Pie \psi )$ as the test functions in the discrete density and momentum equation, respectively.
As mentioned already before due to the uniform conformity of the mesh with respect to time change we have bounds on the projection error independent of the time-step. And as before we will use below all quantities that are related to the triangulation like $\Tht,K,\sigma, \mathcal{E}$ as quantities that change from time-step to time-step. We deal with each term separately:
\paragraph{Step 1 -- time derivative terms} 
The consistency of the time derivative terms have been done in Theorem~\ref{Thm_C1}. 
Indeed, by recalling \eqref{CT} and \eqref{consistency_shell} and using the uniform in $\tau$ bounds on the spatial projection \eqref{IP}, we find that
\begin{subequations}
\begin{equation}\label{CTD}
\begin{split}
  \int_0^T  \intOt{t}{D_t \vrht \varphi }  \dt   + \intOt{t=0}{\vrht^0\varphi^0}\dt
 +  \int_0^T  \intOt{t}{ \vrht(t) \pdt \varphi(t)  } \dt 
\\+ \int_0^T \intOt{t}{\vrht(t) \vwht(t) \cdot \Grad \varphi (t) }
= \order(\TS^\theta)+\order(\heps), \; \theta >0, 
\end{split}
\end{equation}
\begin{equation}\label{CTM}
\begin{split}
  \int_0^T  \intOt{t}{D_t (\vrht \vuht) \cdot \Psie }  \dt   + \intOt{t=0}{\vrht^0\vuht^0 \cdot \Psie^0}\dt
 +  \int_0^T  \intOt{t}{ \vrht \vuht \cdot  \pdt \Psie(t)  } \dt 
\\+ \int_0^T \intOt{t}{(\vrht\vuht\otimes \vwht) : \Grad \Psie (t) }
= \order(\TS^\theta), \; \theta >0, 
\end{split}
\end{equation}

\begin{equation}\label{CTS}
\begin{aligned} 
 \int_0^T  \intS{\delta_t \zht \psi } \dt
 =   \int_0^T \intS{\zht \pdt \psi }  \dt
  - \intS{ \psi^{0} \pdt \eta(0) } 
  +\order(\TS)+\order(h)
  ,
\end{aligned}
\end{equation}
\end{subequations}

\paragraph{Step 2 -- convective terms} 
We first deal with convective terms of the discrete density problem by setting $r=\vrht$, $\vv = \vvht$, $\phi= \varphi$, and $F=\Pim \varphi$ in \eqref{Lem_up}
\[
\begin{aligned}
 \int_0^T \intO{ \vrht \vvht \cdot \Grad  \varphi} \dt  &=- \int_0^T \sumK \intK{ \Pim \varphi  \Divup[\vrht, \vvht ]  } \dt
+ \sum_{i=1}^4 E_i
 \\& =-  \intTO{  \varphi  \Divup[\vrht, \vvht ]  } 
+ \sum_{i=1}^4 E_i
\end{aligned}
\]
where 
\[
\begin{aligned}
& E_1(\vrht) = 
\int_0^T  \sumEKh{ (\Pim \varphi -\varphi) \jump{\vrht}  \avg{\vvht \cdot \vc{n} }^- }  \dt, \; E_3 (\vrht)= \int_0^T \intO{\vrht ( \Pim \varphi - \varphi) \Divh \vvht } \dt ,  
\\& 
E_2(\vrht) = \int_0^T \sumEKh{ \varphi \vrht  \big( \vvht \cdot \vc{n}  - \avg{\vvht \cdot \vc{n} } \big)    } \dt ,
 \; 
E_4 (\vrht)= \hheps \int_0^T  \intEh{ \jump{\vrht} \jump{\Pim \varphi }  } \dt.
\end{aligned}
\]
Next, we estimate the terms $ \sum_{i=1}^4 E_i$. 
\paragraph{Term $E_1(\vrht)$} Applying the estimate~\eqref{con_r} we get 
\[
 \abs{ E_1(\vrht) } \leq h \norm{\varphi}_{C^1}
\int_0^T  \sumEKh{  \abs{ \jump{\vrht}  \avg{\vvht \cdot \vc{n} }^-} }  \dt 
\leq c h^{\zeta_1}, 
\]
where 
\begin{equation}\label{zeta1}
\zeta_1 = \begin{cases} \frac14 & \mbox{if } \gamma \geq \frac65, \\
\frac{3(\gamma-1)}{2\gamma}& \mbox{if } \gamma \in (1, \frac65), \\
\end{cases}
\end{equation}

Obviously $\zeta_1 > 0$ for all $\gamma >  1$. 

\paragraph{Term $E_2(\vrht)$}It is easy to get from H\"older's inequality, the estimates \eqref{uniform_bounds_h}, the fact that $\vrht$ is piece wise constant, Gauss theorem and \eqref{lemB0} that 
\begin{equation}
 \label{PI}
\begin{aligned}
& \abs{ E_2(\vrht) } =  \Abs{
\int_0^T  \sumEKh{ \vrht ( \varphi - \avg{ \varphi} )   \big( \vvht \cdot \vc{n}  - \avg{\vvht \cdot \vc{n} } \big)  }  \dt  }
\\ &=\Abs{
\int_0^T  \sum_{K\in \Tht}\vr_K \int_{\partial K}  ( \varphi - \langle \varphi\rangle_{\partial K} )   \big( \vvht \cdot \vc{n}  - \langle\vvht\rangle_{K} \cdot \vc{n}  \big) }  \dSx \dt 
\\
 &=\Abs{
\int_0^T  \sum_{K\in \Tht} \vr_K \int_{K}  ( \varphi - \langle \varphi\rangle_{\partial K} ) \Divh \vvht  + \nabla \varphi \cdot (  \vvht- \langle \vvht\rangle_K )\dx }  \dt 
\\
 &=\Abs{
\int_0^T  \sum_{K\in \Tht} \vr_K \int_{K}  ( \varphi - \langle \varphi\rangle_{\partial K} ) \Divh \vvht  + (\nabla \varphi-\langle\nabla \varphi\rangle_K )\cdot \vvht\dx }  \dt 
\\&
\aleq h \norm{\varphi}_{C^2}\norm{\vrht}_{L^2L^2}(\norm{\Divh \vvht}_{L^2L^2}+\norm{ \vvht}_{L^2L^2}) \leq h^{\zeta_2},
\end{aligned}
\end{equation}
where $\zeta_2$ reads  
\begin{equation}\label{zeta2}
\zeta_2= \begin{cases}
1-\frac{\varepsilon+2}{2 \gamma} & \mbox{ if } \gamma\in (1,2), \\
1 & \mbox{ if } \gamma \geq 2. 
\end{cases}
\end{equation}
Obviously $\zeta_2>0$ as $\varepsilon<2(\gamma-1)$.

\paragraph{Term $E_3(\vrht)$} Applying H\"older's inequality and inverse estimate~\eqref{inv_est} we get
 \[
 \abs{ E_3(\vrht) } \leq h \norm{\varphi}_{C^1}  \norm{\vrht}_{L^2L^2}  \norm{\Divh \vvht}_{L^2L^2} 
\leq h  \norm{\vrht}_{L^2L^2}
\leq h^{\zeta_2},
\]
where $\zeta_2>0$ is the same as in \eqref{zeta2}.

\paragraph{Term $E_4(\vrht)$} Applying H\"older's inequality, the interpolation estimate~\eqref{IP}, the uniform bounds \eqref{uniform_bounds_h} and the fact
the fact $(\abs{a-b}\leq {a} +{b})$ for $a,b \geq 0$ we get 
\[
\begin{aligned}
& \abs{E_4 (\vrht)}= \hheps \abs{ \int_0^T \intEh{ \jump{\vrht} \jump{\Pim \varphi } } \dt }
 \leq h^{\varepsilon+1}  \int_0^T \intEh{ \abs{ \jump{\vrht} } } \dt 
\leq h^{\varepsilon+1}  \int_0^T \intEh{ 2 \Ovg{\vr} } \dt 
\\& \leq \hheps \norm{\vrht}_{L^1(0,T;\Oht)} \leq \hheps.
\end{aligned}
\]
Consequently, we derive 
 \begin{equation}
 \int_0^T \intOtau{\cdot}{ \vrht \vvht \cdot \Grad  \varphi} \dt   + \int_0^T \sum_{K}\intK{ \Pim \varphi  \Divup[\vrht, \vvht ]  } \dt
\leq h^\theta, \quad \theta=\min\{\zeta_1 , \zeta_2, \varepsilon\}. 
 \end{equation}
Clearly, $\theta>0$ for $\varepsilon \in (0, 2(\gamma-1))$ and $\gamma >1$.

Next, we deal with convective terms in the discrete momentum problem. We recall \eqref{Lem_up} with $r=\vrht \Pim{\vuht}$, $\vv = \vvht$, $\phi= \Psie$, $F= \Pim{\Pie\varphi}$
\[
\begin{aligned}
&\int_0^T \intO{ \vrht \Pim{\vuht} \otimes \vvht : \Grad  \Psie} \dt  
= - \int_0^T \sumK \intK{   \Divup(\vrht \Pim{\vuht}, \vvht) \cdot \Pim{\Pie \Psie} } \dt
\\&\phantom{ \int_0^T \intO{ \vrht \Pim{\vuht} \otimes \vvht : \Grad  \Psie} \dt  
=} 
+  \sum_{i=1}^4 E_i(\vrht \Pim{\vuht})
\\&\quad  =-  \intTO{    \Divup(\vrht\Pim{\vuht}, \vvht) \cdot \Pie \Psie } + \sum_{i=1}^4 E_i(\vrht \Pim{\vuht})
\end{aligned}
\]
where 
\[
\begin{aligned}
& E_1(\vrht  \Pim{\vuht}) = 
\int_0^T  \sumEKh{ ( \avc{\Pie \Psie} -\Psie) \jump{\vrht \Pim{\vuht}}  \avg{\vvht \cdot \vc{n} }^- }  \dt, 
\\& 
E_2(\vrht  \Pim{\vuht}) = \int_0^T \sumEKh{ \Psie \cdot (\vrht  \Pim{\vuht}) \big( \vvht \cdot \vc{n}  - \avg{\vvht \cdot \vc{n} } \big)    } \dt 
\\& 
E_3 (\vrht  \Pim{\vuht})= \int_0^T \intO{\vrht  \Pim{\vuht} \cdot ( \avc{\Pie \Psie} - \Psie) \Divh \vvht } \dt ,  
\\& 
E_4 (\vrht \Pim{\vuht})= \hheps \frac{1}{2}\int_0^T  \intEh{ \jump{\vrht \Pim{\vuht}} \cdot \jump{ \avc{\Pie \Psie} } } \dt
\end{aligned}
\]
Next, we estimate the terms $ \sum_{i=1}^4 E_i(\vrht \Pim{\vuht})$. 
\paragraph{Term $E_1(\vrht\Pim \vuht )$}By H\"older's inequality and the interpolation estimate~\eqref{IP} we get 
\[
\begin{aligned}
& \abs{ E_1(\vrht \Pim{\vuht}) } \leq h \norm{\Psie}_{C^1}
\int_0^T  \sumEKh{  \abs{ \jump{\vrht\Pim{\vuht}}  \avg{\vvht \cdot \vc{n} }^-} }  \dt 
\\& \leq h 
\int_0^T  \sumEKh{  \abs{ \big(\jump{\vrht} \Pim{\vuht} + {\vrht}^{\rm out}\jump{\Pim{\vuht}} \big) \avg{\vvht \cdot \vc{n} }^- } }  \dt =: I_1 +I_2,
\end{aligned}
\]
where we have also applied the chain rule
$ \jump{uv}_\sigma = {u}_\sigma^{\rm in} \jump{v}_\sigma +\jump{u}_\sigma {v}_\sigma^{\rm out} \mbox{ for all } u,v\in \Qh.$ 
Applying the estimate \eqref{con_m} we get the estimates of the first term 
\[I_1 =h \int_0^T\sumEKh{  \abs{ \jump{\vrht}\Pim{\vuht}  \avg{\vvht \cdot \vc{n} }} }  \dt \leq  h^{\zeta_3}   \]
where  
$\zeta_3= \begin{cases}1/4 & \mbox{if } \gamma \in [\frac43, \infty), \\
{(5\gamma-6)}/(2\gamma)  & \mbox{if }\gamma \in (1,\frac43), 
\end{cases}\quad  \zeta_3>0 \mbox{ provided } \gamma >\frac65.$  
The second term $I_2$ can be estimates by 
\[\begin{aligned}
& I_2 =h\int_0^T \sumEKh{  \abs{ \vrht^{\rm out} \jump{\Pim{\vuht}}  \avg{\vvht \cdot \vc{n} }^- } }  \dt 
\\ & \leq 
 h \left( \int_0^T \intEh{   \vrht^{\rm out} \abs{\avg{\vvht \cdot \vc{n} }^- } \jump{\Pim{\vuht}}^2    }  \dt \right)^{1/2}
\left( \int_0^T \intEh{    \vrht^{\rm out} \abs{\avg{\vvht \cdot \vc{n} }^- } }  \dt \right)^{1/2}
\\& \leq 
h^{1/2} \norm{\vrht}_{L^1L^{6/5}}^{1/2} \norm{\vvht}_{L^\infty L^6}^{1/2} 
\aleq 
h^{1/4} \norm{\vrht}_{L^\infty L^{6/5}}^{1/2}, 
\end{aligned} \]
where we have used the estimate \eqref{uniform_bounds_h}$_4$. 
It is obvious that 
$I_2 \aleq h^{\frac14}$ for $\gamma \geq \frac65$.  Further by the inverse estimate we derive for $\gamma \in(1, \frac65)$ that 
\[ I_2 \aleq h^{\frac14} h^{\frac32(\frac56 -\frac{1}{\gamma})}  \norm{\vrht}_{L^\infty L^\gamma}^{1/2} \aleq  h^{\frac{3(\gamma-1)}{2\gamma}}. 
\]
Consequently, $I_2 \aleq h^{\zeta_1}$, and  $\zeta_1>0$ for all $\gamma>1$, see \eqref{zeta1}. 

Combining the estimates of the terms $I_1$ and $I_2$ we get
\[
E_1(\vrht\Pim \vuht ) \leq h^{\zeta_3} + h^{\zeta_1}.
\]
\paragraph{Term $E_2(\vrht\Pim \vuht )$} We proceed as in \eqref{PI} using the fact that $\vrht\Pim{\vuht}$ is constant on each $K$. Hence we find analogously
 \[
\begin{aligned}
& \abs{ E_2(\vrht \Pim{\vuht}) } =  \abs{
\int_0^T  \sumEKh{ \vrht\Pim{\vuht} \cdot ( \Psie - \avg{ \Psie} )   \big( \vvht \cdot \vc{n}  - \avg{\vvht \cdot \vc{n} } \big)  }  \dt  }
\\ &=\Abs{
\int_0^T  \sum_{K\in \Tht} \sum_{j=1}^d\vr_K \Pim{(u_{K})_j} \int_{\partial K}  ( \Psie^j - \langle \Psie^j\rangle_{\partial K} )   \big( \vvht \cdot \vc{n}  - \langle\vvht\rangle_{K} \cdot \vc{n}  \big) \dSx}   \dt 
\\
 &=\Abs{
\int_0^T  \sum_{K\in \Tht} \sum_{j=1}^d\vr_K \Pim{(u_{K})_j} \int_{K}  ( \Psie^j - \langle \Psie^j\rangle_{\partial K} ) \Divh \vvht  + (\nabla \Psie^j-\langle\nabla\Psie^j\rangle_K )\cdot \vvht \dx}  \dt 
\\& \leq h \norm{\Psie}_{C^2} \norm{\vrht\Pim{\vuht}}_{L^2L^2}(\norm{\Divh \vvht}_{L^2L^2} +\norm{\vvht}_{L^2L^2} )\aleq h^{\zeta_2},
\end{aligned}
\]
where $\zeta_2 >0$ is given in \eqref{zeta2}.
\paragraph{Term $E_3(\vrht\Pim \vuht )$} Employing H\"older's inequality, the interpolation estimate~\eqref{IP} and the estimate \eqref{uniform_bounds_h} we derive 
 \[
 \abs{ E_3(\vrht \Pim{\vuht}) } \leq h \norm{\Psie}_{C^1}  \norm{\vrht \Pim \vuht}_{L^2L^2}  \norm{\Divh \vvht}_{L^2L^2} 
\leq h^{\zeta_2}
\]
where $\zeta_2 >0$ is given in \eqref{zeta2}.  
\paragraph{Term $E_4(\vrht\Pim \vuht )$} Using H\"older's inequality, the interpolation estimate~\eqref{IP}, and the estimate \eqref{uniform_bounds_h} we derive 
\[
\begin{aligned}
&  \abs{E_4 (\vrht \Pim{\vuht})}= \hheps \abs{ \int_0^T \intEh{ \jump{\vrht \Pim{\vuht} } \jump{\Pim \Psie }  } \dt }
\\& \leq h^{\varepsilon+1}  \int_0^T \intEh{ \abs{ \jump{\vrht \Pim{\vuht} } } } \dt 
\leq  h^{\varepsilon+1}  \int_0^T \intEh{ 2 \Ov{\vrht \abs{\Pim{\vuht}}}  }\dt
\\& \leq \hheps \norm{\vrht \Pim{\vuht} }_{L^1(0,T;\Oht)} \leq \hheps.
\end{aligned}
\]
Consequently, we derive 
 \begin{equation}
 \int_0^T \intO{ (\vrht \Pim{\vuht} \otimes \vvht) : \Grad  \Psie} \dt   + \int_0^T \sumK \intK{  \Divup[\vrht\Pim{\vuht}, \vvht ]  \cdot \Pie \Psie } \dt
\leq h^\theta
 \end{equation}
where $\theta= \min \{ \zeta_1, \zeta_2, \zeta_3, \varepsilon \} >0$ provided $\gamma >\frac65$ and  $\varepsilon \in (0, 2(\gamma-1))$.  
  
\paragraph{Step 3 -- pressure and diffusion terms} 
First, it is easy to calculate 
\begin{equation}
\intO{\pht \Divh \Pie \Psie} = \sumK p_K \intK {\Divh \Pie \Psie}
=\sumK p_K \int_{\facesK} { \Psie \cdot \vc{n} }
= \intO{\pht \Divh \Psie} .
\end{equation}
Similarly for the physical diffusion term we have
\begin{equation}
\intO{ \Divh  \vuht  \Divh \Pie \Psie} =\intO{ \Divh  \vuht  \Divh  \Psie} , \quad 
\intO{ \bfD(\vuht) : \Gradh\Pie \Psie} =\intO{ \bfD(\vuht) : \Gradh \Psie}.
\end{equation}
Concerning the penalty diffusion term, we control it as follows
\begin{align*}
\intEh{\frac1h \jump{\vuht} \cdot \jump{\Pie\Psie}} 
&= \intEh{\frac1h \jump{\vuht} \cdot \jump{\Pie\Psie - \Psie}} 
\aleq \norm{\vuht}_{H^1_Y} \norm{\Pie\Psie - \Psie}_{H^1_Y}
\\
&\aleq h \norm{\vuht}_{H^1_Y} \norm{\Psie}_{W^{2,2}}
\end{align*}
where we have used H\"older's inequality, the interpolation error~\eqref{interpolation} and the fact $\jump{\Psie}\equiv 0$. 

\paragraph{Step -- 4 rest of the structure part and external forces}
By the standard interpolation error, we have 
\begin{equation} 
\begin{aligned}
\int_0^T \intS{  \Lap \etaht  \Lap  \psi_{\heps,\eps } } &= \int_0^T \intS{  \Lap \etaht  \Lap  \psi }  + \order(h) ,
\\
\int_0^T \intO{\vrht \bfht \cdot \Psie} + \int_0^T \intS{ \ght  \psi_{\heps,\eps}} 
&=\int_0^T \intO{\vrht \bfht \cdot \Psie}  + \int_0^T \intS{\gh  \psi}  + \order(h) .
\end{aligned}
\end{equation}
Finally, collecting all the above terms we finish the proof. 
\end{proof}

\subsection*{Conclusion}
We have studied the fluid--structure interaction problem involving compressible viscous fluids. 
We have firstly proposed an energy stable time discretization scheme \eqref{SS}, see Theorem~\ref{thm_s1}. Our discretization fulfills the geometric conservation law, see \eqref{GC}. 
Moreover, we have shown that the numerical solutions satisfy the renormalized equation and they are consistent with respect to the weak solutions, see Lemma~\ref{lem_r0} and Theorem~\ref{Thm_C1}, respectively. 

Further, we have developed a fully discrete mixed finite volume--finite element method \eqref{FS}. We have proven the existence of a numerical solution to the scheme \eqref{FS} in Theorem~\ref{thm:exist}. 
We have shown that the numerical solutions of \eqref{FS} satisfy the renormalized equations (see Lemma~\ref{lem_r1}), mass conservation (see \eqref{MCFS}), positivity of density (see Lemma~\ref{lem_positive}),  energy dissipation (see Theorem~\ref{thm_s2}) and they are consistent to the weak solutions as well (see Theorem~\ref{Thm_C2}).

%
%
%

\appendix
\section{}

\subsection{Proof of Theorem~\ref{thm:exist}: existence of a numerical solution}\label{Ap:exist}
We aim to prove Theorem~\ref{thm:exist} for the existence of a numerical solution. Before that let us first introduce an abstract theorem, see \cite[Theorem A.1]{GallouetMAC}.
\begin{Theorem}(\cite[Theorem A.1]{GallouetMAC})\label{thm_A1} 
Let $M$ and $N$ be positive integers. Let $  C_1>\epsilon>0$  and $C_2>0$ be real numbers. Let $V$ and $W$ be defined as follows:
\begin{equation*}
V = \{ (x,y) \in R ^M \times R ^N, \ x>0 \}, \quad 
W = \{ (x,y) \in R ^M \times R ^N, \, \epsilon < x < C_1   \text{ and }  \| y\| \le C_2 \},
\end{equation*}
where the notation $x > c$ means that each component of $y$ is greater than $c$,  and $\| \cdot \| $ is a norm defined over $R ^N$. Let  $ F $ be a continuous function from  $V \times [0,1]$ to $R ^M \times R ^N$ satisfying:
\begin{enumerate}
\item $\forall\, \zeta \in [0,1] $, if $ v \in V $ is such that $ F(v,\zeta)=0 $ then $ v \in W $;
\item The equation $F (v, 0)=0$ is a linear system on $v$ and  has a solution in $W$.
\end{enumerate}
Then there exists at least a solution $ v \in W$ such that $F(v,1) = 0$.
\end{Theorem}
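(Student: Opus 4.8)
The plan is to deduce the statement from the homotopy invariance and solution properties of the Brouwer topological degree on $\mathbb{R}^{M+N}$. The first step is to choose the right bounded open set. I would fix any $C_2' > C_2$ (say $C_2' = C_2 + 1$) and set
\[
\mathcal{O} := \{ (x,y)\in\mathbb{R}^M\times\mathbb{R}^N : \epsilon < x_i < C_1\ (1\le i\le M),\ \|y\| < C_2' \},
\]
which is open and bounded; since $\epsilon>0$, every point of $\overline{\mathcal{O}}$ has strictly positive $x$-components, so $\overline{\mathcal{O}}\subset V$ and $H:=F|_{\overline{\mathcal{O}}\times[0,1]}$ is a genuine continuous homotopy of maps $\overline{\mathcal{O}}\to\mathbb{R}^{M+N}$. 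The role of hypothesis (1) is then to guarantee admissibility of this homotopy: for every $\zeta$, any zero of $F(\cdot,\zeta)$ lies in $W$, hence has $\epsilon<x_i<C_1$ and $\|y\|\le C_2<C_2'$, so no zero of $H(\cdot,\zeta)$ lies on $\partial\mathcal{O}$. Consequently the degree $\deg(H(\cdot,\zeta),\mathcal{O},0)$ is well defined and, by homotopy invariance, constant in $\zeta$; in particular $\deg(F(\cdot,1),\mathcal{O},0)=\deg(F(\cdot,0),\mathcal{O},0)$.

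The second step is to evaluate the degree at $\zeta=0$. By hypothesis (2), $v\mapsto F(v,0)$ is affine, say $F(v,0)=Av-b$ with $A$ a square $(M+N)\times(M+N)$ matrix, and the system $Av=b$ has a solution lying in $W$. I would argue that $A$ must be invertible: its solution set is a non-empty affine subspace all of whose points are zeros of $F(\cdot,0)$, hence --- by hypothesis (1) --- contained in the bounded set $W$; a bounded affine subspace is a single point, so $\ker A=\{0\}$. Thus $F(\cdot,0)$ has a unique zero $v_0=A^{-1}b\in W\subset\mathcal{O}$, and the degree of an invertible affine map at its interior zero is $\deg(F(\cdot,0),\mathcal{O},0)=\mathrm{sgn}(\det A)\neq 0$.

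Combining the two steps, $\deg(F(\cdot,1),\mathcal{O},0)\neq 0$, so by the solution property of the Brouwer degree there is a point $v\in\mathcal{O}$ with $F(v,1)=0$; applying hypothesis (1) once more with $\zeta=1$ places this $v$ in $W$, which is the assertion. I expect the only delicate point to be the passage from ``$Av=b$ is solvable in $W$'' to ``$A$ is invertible'': this is precisely where hypothesis (1) gets used a second time, to bound the entire solution set and thereby force injectivity of $A$. Everything else is a routine invocation of standard degree theory (any textbook treatment of the Brouwer degree suffices); alternatively one could replace the degree argument by a finite-dimensional Leray--Schauder fixed-point argument applied to $v\mapsto v-F(v,\zeta)$, but the degree-theoretic route is the most economical.
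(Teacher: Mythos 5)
The paper does not prove this statement at all; it is imported verbatim from \cite[Theorem A.1]{GallouetMAC}, and the proof given there is precisely the Brouwer-degree argument you outline (an admissible homotopy on a bounded open box containing $W$, constancy of the degree in $\zeta$, and non-vanishing of the degree at $\zeta=0$ via the linear system). So your route is the intended one, and your Steps 1 and 3 are fine.

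There is, however, one gap, and it sits exactly where you flag the delicate point. Writing $F(v,0)=Av-b$, you assert that the full solution set $S=\{v:\,Av=b\}$ consists of zeros of $F(\cdot,0)$ and is therefore, by hypothesis (1), contained in the bounded set $W$. But $F$ is only defined on $V\times[0,1]$, and hypothesis (1) only constrains zeros that lie in $V$; a priori the affine subspace $S$ could leave $V$, and its part outside $V$ escapes the reach of hypothesis (1), so boundedness of $S$ (hence injectivity of $A$) does not follow as written. The repair is a short connectedness argument: $S\cap V$ is non-empty (it contains the solution in $W$ provided by hypothesis (2)) and relatively open in $S$ because $V$ is open; by hypothesis (1) it is contained in $W$, and since $\overline{W}\subset\{x\ge\epsilon\}\subset V$, the set $S\cap V$ is also relatively closed in $S$. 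As $S$ is connected, $S=S\cap V\subset W$, so $S$ is bounded, hence a single point, hence $\ker A=\{0\}$. With that patched, the remainder of your argument (degree $\mathrm{sgn}(\det A)\neq 0$ for the invertible affine map, homotopy invariance, the solution property of the degree, and one final application of hypothesis (1) at $\zeta=1$) is complete. Note that this invertibility really is indispensable: if $A$ were singular its image would be a proper subspace, the degree of $F(\cdot,0)$ on your set $\mathcal{O}$ would vanish, and the homotopy argument would yield nothing.
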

Now we are ready to show Theorem~\ref{thm:exist}. 
\begin{proof}
Let us denote $\Uht^k =(\vuht^k, \zht^k)$, $\Q(\Oht)=\left\{ (\bfPsi, \phi) \in \Yspace(\Oht) \times \Wspace(\Sigma)\middle| \bfPsi|_{\Sigma} =\psi \bfe_d \right\}$,  and define
\begin{equation*}
V =\{ ( \vrht^k, \Uht^k  ) \in \Xspace(\Oht)   \times \Q, \ \vrht^k > 0  \}.
\end{equation*}
It is obvious that the degrees of freedom of the spaces $\Xspace(\Oht)$ and $\Q(\Oht)$ are finite. Indeed, the space $\Xspace(\Oht)$ can be identified by the set of values $\vr_K$ for all $K \in \Thk$, therefore $\Xspace(\Oht) \subset \R^M$, where $M$ is the total number of elements of $\Thk$. Analogously, $\Q(\Oht)\subset \R^N$, where $N$ is the sum of $d$ times degrees of freedom of $\edges^k$ and the degrees of freedom of $\Sigma$. 
Let us consider the mapping
  \begin{align*}
   F : \ V  \times [0,1]\longrightarrow  \Xspace  \times \Q.  \qquad
        (\vrht^k,\Uht^k , \zeta)\longmapsto ( \vr^\star, U^\star) =F(\vrht^k,\Uht^k ,\zeta),
  \end{align*}
  where $( \vr^\star, U^\star) \in   \Xspace \times \Q $ is such that
\begin{subequations}
\begin{equation} \label{q1}
\intO{ \vr^\star \varphiht } = \intO{ \frac{\vrht^k  -\vrht^{k-1}\circ \bfXkm \mJkm }{\TS} \varphiht }  + \zeta \intO{\Divup ( \vrht^{k}, \vvht^{k} )\varphiht }  
;
\end{equation}
\begin{multline} \label{q2}
\intO{  U^\star \cdot \bfPsiht } =
\intO{ \frac{\vrht^k \avc{\vuht^{k}}  - (\vrht^{k-1}\avc{\vuht^{k-1}}) \circ \bfXkm \mJkm }{\TS}  \cdot \bfPsiht }
\\
+ \intS{ \frac{ \zht^{k} - \zht^{k-1}  }{\TS}\psiht }
+  \intS{  \Lap  \etaht^{k} \Lap  \psiht } 
-  \intO{\vrht^k \bfh^{k}\cdot \bfPsiht }  + \intS{\gh^{k} \psiht } 
\\
+\zeta \intO{\Divup (\vrht^{k} \avc{\vuht^{k}}, \vvht^{k} ) \cdot \bfPsiht }
- \zeta \intO{ p(\vrht^{k}) \Divh \bfPsiht } 
+ \zeta \lambda  \intO{ \Divh \vuht^{k} \Divh \bfPsiht }   
\\
+ \zeta 2\mu \intO{ \bfD( \vuht^{k} ) : \Gradh \bfPsiht }  
+ (1-\zeta) 2\mu \intO{ \left( (\Jacob_0^k)^{-1} \bfD( \vuht^{k} ) \right) : \left((\Jacob_0^k)^{-1} \Gradh \bfPsiht\right) }  
\\
+ \zeta 2\mu \intEh{\frac{1}{h} \jump{\vuht^k} \cdot \jump{\bfPsiht}}
+ (1-\zeta) 2\mu \intEh{\frac{1}{h}  \jump{\vuht^k} \cdot \jump{\bfPsiht} \frac{\abs{\sigma \circ \ALE}}{\abs{\sigma}}}
;
\end{multline} 
\end{subequations}
where 
$
\bfPsiht = (\bfPsiht, \psiht), \quad 
\etaht^k = \etaht^{k-1} + \TS \zht^k, \quad
\vuht^k|_{\Sigma} = \zht^k \bfe_d , \quad 
\mJkm = \left(H + \etaht^{k-1}\right)/\left( H + \etaht^{k}\right).
$
  
It is easy to check that $F$ is continuous. Indeed, it is a one to one mapping, since the values of $\vr^\star$ and $U^\star$ can be determined by setting $\varphiht = 1_{K}$ in \eqref{q1}, and $(\Phih)_i=1_{D_\sigma}, (\Phih)_j=0$ for $j\neq i$ in \eqref{q2}.

Let $(\vrht^k,\Uht^k ) \in \Xspace \times \Q$ and $\zeta \in [0,1]$ such that $F(\vrht^k,\Uht^k , \zeta)=(0,0)$ (in particular $\vrht^k>0$). Then for any $\big(\varphiht, \bfPhiht = (\bfPsiht, \psiht) \big)\in \Xspace \times \Q$  

\begin{subequations}
\begin{equation} \label{q3}
 \intO{ \frac{\vrht^k  -\vrht^{k-1} \circ \bfXkm \mJkm }{\TS} \varphiht }  + \zeta \intO{\Divup ( \vrht^{k}, \vvht^{k} )\varphiht}  =0 
;
\end{equation}
\begin{multline} \label{q4}
\intO{ \frac{\vrht^k \avc{\vuht^{k}} -(\vrht^{k-1}\avc{\vuht^{k-1}}) \circ \bfXkm \mJkm}{\TS}  \cdot \bfPsiht }
+ \intS{ \frac{ \zht^{k} - \zht^{k-1}  }{\TS}\psiht }
\\+  \intS{  \Lap  \etaht^{k} \Lap  \psiht } 
-  \intO{\vrht^k \bfh^{k}\cdot \bfPsiht }  + \intS{\gh^{k} \psiht} 
\\
+\zeta \intO{\Divup (\vrht^{k} \avc{\vuht^{k}}, \vvht^{k} ) \cdot \bfPsiht }
- \zeta \intO{ p(\vrht^{k}) \Divh \bfPsiht } 
+ \zeta \lambda  \intO{ \Divh \vuht^{k} \Divh \bfPsiht }   
\\
+ \zeta 2\mu \intO{ \bfD( \vuht^{k} ) : \Gradh \bfPsiht }  
+ (1-\zeta) 2\mu \intO{  \frac1{\mJik}\left((\Jacob_0^k)^{T} \bfD( \vuht^{k} ) \right) : \left((\Jacob_0^k)^{T} \Gradh \bfPsiht\right) }  
\\
+ \zeta 2\mu \intEh{\frac{1}{h} \jump{\vuht^k} \cdot \jump{\bfPsiht}}
+ (1-\zeta) 2\mu \intEh{\frac{1}{h}  \jump{\vuht^k} \cdot \jump{\bfPsiht} 
\frac{1}{ \abs{\mJik(\Jacob_0^k)^{-T} \widehat{\bfn} } }
}
.
\end{multline} 
\end{subequations}
Taking $\varphiht=1$ as a test function in \eqref{q3} we obtain
\begin{equation}\label{q5}
\norm{\vrht^k}_{L^1(\Oht)}= \intOk{\vrht^k} =  \intOkm{\vrht^{k-1}} >0,
\end{equation}
which indicates the boundedness of $\vrht^k$ in the $L^1$ norm, and thus in all norms as the problem is of finite dimension. Following the same argument as Lemma~\ref{lem_non-negative} we know that $\vrht^k \geq 0$  provided $\vrht^{k-1} \geq 0$. 

Taking $\bfPhiht=(\vuht^k, \zht^k)$ as the test function in \eqref{q4} and follow the proof of Theorem \eqref{thm_s1} gives 
\begin{equation}\label{q6}
\norm{\Uht^k } := \norm{\Gradh \vuht^k}_{L^2(\Oht)} + \norm{\zht^k}_{L^2(\Sigma)} \leq C_1
\end{equation} 
where $C_1$ depends on the data of the problem. 

Further, let $K\in \Thk$ be such that $\vr_K^k$ is the smallest, i.e., 
$ \vr_K^k \leq \vr_L^k$ for all $L\in \Thk$. We denote $K'= \ALEhtkm\circ (\ALEhtk)^{-1}(K)$. Then a straightforward computation gives 
\begin{align*}
&\frac{ \vr_K^k \abs{K} - \vr^{k-1}_{K'}\abs{K'} }{\TS \zeta}  = - \int_K \Divup (\vrht^k, \vvht^k)
=  - \sum_{\sigma \in \facesK} \abs{\sigma} \vrht^{k,up} \avg{\vvht^k \cdot \vc{n}} + 
 \sum_{\sigma \in \facesK} \abs{\sigma} \underbrace{ \hheps \jump{\vrht^k}}_{\geq 0} 
\\
&\quad  \geq  - \sum_{\sigma \in \facesK} \abs{\sigma}  \vr_K^k \avg{\vvht^k \cdot \vc{n}} 
+ \sum_{\sigma \in \facesK} \abs{\sigma}  (\vr_K^k - \vrht^{k,up} )\avg{\vvht^k \cdot \vc{n}}  
\\
&\quad  =
- \abs{K} \vr_K^k  (\Divh \vvht^k)_K - \sum_{\sigma \in \facesK} \abs{\sigma}  \jump{\vrht^k} \avg{\vvht^k \cdot \vc{n}}^-
 \geq - \abs{K} \vr_K^k  (\Divh \vvht^k)_K  \geq - \abs{K} \vr_K^k  \abs{(\Divh \vvht^k)_K} .
\end{align*}
Thus 
$
\vrht^k \geq  \vr_K^k \geq \frac{\abs{K'}}{\abs{K}} \frac{\vr^{k-1}_{K'} }{1 + \TS \zeta  \abs{(\Divh \vvht^k)_K} }  >0.
$
Consequently, by virtue of \eqref{q6}
$ \vrht^k > \epsilon $,
where $\epsilon$ depends only on the data of the problem. 
Further, we get from \eqref{q5} that
$ \vrht^k \leq \frac{ \intOkm{\vrht^{k-1}}}{\min_{K\in \Thk} |K|}$, 
which indicates the  existence of $C_2>0$ such that 
$\vrht^k <C_2$.
Therefore, the Hypothesis 1 of Theorem \ref{thm_A1} is satisfied. 

Next, we proceed to show that the Hypothesis 2 of Theorem \ref{thm_A1} is satisfied. 
Let $\zeta=0$ then the system $F(\vrht^k,\Uht^k )=0$ reads
\begin{subequations}\label{q78}
\begin{equation} \label{q7}
\vrht^k = \vrht^{k-1} \circ \bfXkm  \mJkm;
\end{equation}
\begin{multline}\label{q8}
\intO{ \frac{\vrht^k \avc{\vuht^{k}}  -(\vrht^{k-1} \avc{\vuht^{k-1}} ) \circ \bfXkm \mJkm}{\TS}  \cdot \bfPsiht }
+ \intS{ \frac{ \zht^{k} - \zht^{k-1}  }{\TS}\psiht }
\\
+  2\mu \intO{ \frac1{\mJik} \left( (\Jacob_0^k)^{T} \bfD( \vuht^{k} ) \right) : \left((\Jacob_0^k)^{T} \Gradh \bfPsiht\right) }  
+ 2\mu \intEh{\frac{1}{h}  \jump{\vuht^k} \cdot \jump{\bfPsiht} 
\frac{1}{ \abs{\mJik(\Jacob_0^k)^{-T} \widehat{\bfn} } }
}
\\
+  \intS{\left( \alpha \Lap  \etaht^{k} \Lap  \psiht + \beta \Grad  \etaht^{k} \Grad \psiht \right)} 
-  \intO{\vrht^k \bfht^{k}\cdot \bfPsiht}  + \intS{\gh^{k} \psiht} 
=0.
\end{multline} 
\end{subequations}
To solve the above system \eqref{q78}, we further reformulate it on the reference domain according to \eqref{Piola} 
\begin{subequations}
\begin{equation} \label{q9}
\hvrht^k \mJik= \hvrht^{k-1}\mJikm;
\end{equation}
\begin{multline}\label{q10}
\intOref{  \hvrht^{k-1}\mJikm \frac{\avc{\hvuht^{k}} -\avc{\hvuht^{k-1}}}{\TS}  \cdot \hbfPsiht }
+ \intS{ \frac{ \zht^{k} - \zht^{k-1}  }{\TS}\psiht }
+ 2\mu \intEhref{\frac{1}{h}  \jump{\hvuht^k} \cdot \jump{\hbfPsiht}}
\\+  2\mu \intOref{  \widehat{\bfD}( \hvuht^{k} ) : \widehat{ \Grad} \hbfPsiht}  
+  \intS{\left( \alpha \Lap  \etaht^{k} \Lap  \psiht + \beta \Grad  \etaht^{k} \Grad \psiht \right)} 
-  \intOref{\hbfh^{k}\cdot \hbfPsiht \hvrht^{k-1} \mJikm }  + \intS{g^{k} \psiht} 
=0,
\end{multline} 
where $\mJikm = 1+ \etaht^{k-1}/H$ and $\mJik = 1+ \etaht^k/H$.
\end{subequations}
Realizing that \eqref{q8} is a linear system with a matrix being block-wise symmetric positive definite, we know that there exists exactly one solution $\widehat{U}_{\heps,\eps}^k =(\hvuht^k, \zht^k)$. 
Then using the fact $\etaht^k = \etaht^{k-1} + \TS \zht^k$ we get $\etaht^k$ and $\ALEhtk$. Further, it is straightforward that $\vuht^k=\hvuht^k\circ \ALEhtk(\bfxref)$. Finally, substituting $\etaht^k$ into \eqref{q7} we obtain the solution for $\vrht^k$. Obviously, $\vrht^k>0$ as long as no self touching. Thus the solution $(\vrht^k,\Uht^k )$ belongs to $V$. 

We have shown that both Hypothesis of Theorem~\ref{thm_A1} hold. Applying Theorem~\ref{thm_A1} finishes the proof.
\end{proof}

\subsection{Proof of Lemma~\ref{lem_r1}: renormalization}\label{Aprc}
Here we show the validity of the discrete renormalized equation stated in Lemma~\ref{lem_r1} for the discrete continuity problem~\eqref{FS_D}.  
\begin{proof}
Firstly, we set $\varphiht=B'(\vr)$ in  \eqref{FS_D} and obtain
$\intOhk{ D_t \vrht^k B'(\vrht^{k}) }  + \intOhk{\Divup \left( \vrht^{k}, \vvht^{k}  \right) B'(\vrht^{k}) } 
 = 0 $.
Next, recalling \eqref{N1}, we know there exist $\xi \in \co{\vrht^{k-1}\circ \bfXkm}{\vrht^{k}}$ such that 
\begin{align*}
& \intOhk{ D_t \vrht^{k} B'(\vrht^{k})} =\intOhk{ \frac{\vrht^{k} -\vrht^{k-1}\circ \bfXkm \mJ^k }{\TS} B'(\vrht^{k}) }  
\\& = 
\frac{ 1}{\TS} \left( \intOhk{  B(\vrht^k)}  - \intOhkm{ B(\vrht^{k-1}) }  \right)
+  \intOhk{ (\vrht^k B'(\vrht^k) -B(\vrht^k)  ) \Divh \vwht^k}  + D_1
\end{align*}
where  
$D_1=  \frac{1}{\TS}\intOhk{  \mJkm  \left( B(\vrht^{k-1}\circ \bfXkm)  - B(\vrht^k) - B'(\vrht^{k}) \big(  \vrht^{k-1}\circ \bfXkm-\vrht^k  \big)  \right)  }. $

Further, by recalling the definition of the upwind flux~\eqref{def_divup}, and using again the Taylor expansion, we reformulate the convective term as 
\begin{align*}
&\intOhk{  \Divup (\vrht^{k},\vvht^{k}) B'(\vrht^{k}) } 
= \sumintK{ B'(\vrht^{k}) \sum_{\sigma\in \pd K}\frac{|\sigma|}{|K|}  \left( \vrht^{k,up} \avg{\vvht^{k} \cdot \bfn}  - \hheps \jump{\vrht^{k}}  \right) } 
\\&=
\intOhk{ \vrht^{k} B'(\vrht^{k})  \Divh\vvht^{k}}
+ \sumintK{ B'(\vr_K^{k})  \sum_{\sigma\in \pd K}\frac{|\sigma|}{|K|} (\vrht^{k,up}-\vr_K^{k}) \avg{\vvht^{k} \cdot \bfn} \!\! }
- \hheps \sum_{K\in \Thk}B'(\vr_K^k) \sum_{\sigma\in \pd K} |\sigma| \jump{\vrht^k}  
\\& =
\intOhk{ \vrht^{k} B'(\vrht^{k})  \Divh\vvht^{k}}
+ \sumintKs{B'(\vr_K^{k})  \jump{\vrht^{k}}  \left(\left[\avg{\vvht^{k} \cdot \bfn} \right]^- -\hheps\right) }
\\& =
\intOhk{ \vrht^{k} B'(\vrht^{k})  \Divh\vvht^{k}}
+ \sumintKs{ \jump{B(\vrht^{k})}  \left(\left[\avg{\vvht^{k} \cdot \bfn} \right]^- -\hheps\right) } +D_2 
\end{align*}
where $D_2= \sumintKs{ \left(B'(\vr_K^{k})  \jump{\vrht^{k}}  - \jump{B(\vrht^{k})} \right) \left(\left[\avg{\vvht^{k} \cdot \bfn} \right]^- -\hheps\right) }$. 
Moreover, using the facts
\[ \left[\avg{\vvht^{k} \cdot \bfn} \right]^- = \frac12 \left( \avg{\vvht^{k} \cdot \bfn} - \left|\avg{\vvht^{k} \cdot \bfn }\right| \right)
\mbox{ and } 
\sum_{K \in \Thk} \sum_{\sigma\in \pd K}\intsh{ \jump{B(\vrht^k)} \left(\left|\avg{\vvht^{k} \cdot \bfn} \right| + \hheps\right)  } =0,
\]
we obtain 
\begin{align*}
& \sumintKs{ \jump{B(\vrht^{k})}  \left(\left[\avg{\vvht^{k} \cdot \bfn} \right]^- -\hheps\right) } 
 =  \frac12 \sumintKs{ \jump{B(\vrht^{k})}  \avg{\vvht^{k} \cdot \bfn}} 
\\ & =
\sum_{K \in \Thk} B(\vr_K^{k}) \sum_{\sigma\in \pd K}\intsh{ \avg{\vvht^{k} \cdot \bfn} }
= \intOhk{B(\vrht^{k})  \Div \vvht^{k} }. 
\end{align*}
Consequently,  we derive 
$ \intOhk{  \Divup (\vrht^{k},\vvht^{k}) B'(\vrht^{k}) }  = 
\intOhk{ \left(\vrht^k B'(\vrht^k) -B(\vrht^k)  \right) \Divh \vvht^k} + D_2.$
  
Finally, collecting the above terms and seeing $\vvht^k + \vwht^k= \vuht^k$, we complete the proof, i.e., 
\begin{equation*}
\begin{aligned}
\frac{ 1}{\TS} \left( \intOhk{  B(\vrht^k)}  - \intOhkm{ B(\vrht^{k-1}) }  \right) +  \intOhk{ \left(\vrht^k B'(\vrht^k) -B(\vrht^k)  \right) \Divh \vuht^k} +D_1 + D_2 =0. 
\end{aligned}
\end{equation*}
\end{proof}

\subsection{Proof of Theorem~\ref{thm_s2}: energy stability}\label{A_thm_s2}
Here we prove the energy stability stated in Theorem~\ref{thm_s2} for the discrete scheme~\eqref{FS}.
\begin{proof}
Setting $\varphiht = - \frac{\left|\avc{ \vuht^{k}}\right|^2}{2}$ in \eqref{FS_D} and $(\bfPsiht,\psiht) = (\vuht^{k}, \zht^{k})$ in \eqref{FS_M} we get 
$\sum_{i=1}^2 I_i=0$ and $\sum_{i=3}^{9} I_i=0$ respectively, where 
\begin{equation*}
\begin{aligned}
& I_1 = -\intOhk{ D_t \vrht^k \frac{\left|\avc{ \vuht^{k}}\right|^2}{2} } , \; 
I_2= -\intOhk{\Divup \left( \vrht^{k}, \vvht^{k}  \right) \frac{\left|\avc{ \vuht^{k}}\right|^2}{2} } , 
\\&
I_3 = \intOhk{ D_t \left( \vrht^{k} \avc{ \vuht^{k}} \right) \cdot \vuht^{k} },  \; 
I_4 = \intOhk{\Divup \left(\vrht^k \avc{ \bfu}^{k}_h , \vvht^{k} \right) \cdot \vuht^{k} },
\\
&I_5 =- \intOhk{ p(\vrht^k) \Divh \vuht^{k} },
I_6 =  \intOhk{  \left(2\mu |\bfD(\vuht^k) |^2 + \lambda|\Divh \vuht^k |^2   \right)  } + 2 \mu \intEh{\frac1h\jump{\vuht^k}^2}
\\
& I_7=  \intOk{\vrht^k \bfh^{k}\cdot \vuht^{k}} + \intS{\gh^{k}  \zht^k} , \;
I_{8}= \intS{ \frac{\zht^k- \zht^{k-1}}{\TS} \zht^k },\; 
I_{9}= \intS{ \left(\alpha \Lap  \etaht^{k} \Lap  \zht^{k}  + \beta \Grad \etaht^{k} \cdot \Grad  \zht^{k} \right) }.
\end{aligned}
\end{equation*}
Now we proceed with the summation of all the $I_i$ terms for  $i=1,\ldots,9$. 

{\bf Term $(I_1+I_3+I_8)+(I_6+I_7)+I_9$.} 
Firstly, analogously as in the proof of Theorem~\ref{thm_s1} we have 
\begin{equation*}
\begin{aligned}
(I_1&+I_3+I_8)+(I_6+I_7)+I_9 
\\ =&  
\frac{1}{\TS} \left( \intOhk{ \frac12\vrht^{k} \left|\avc{ \vuht^{k}}\right|^2}   -  \intOhkm{ \frac12\vrht^{k-1} \left|\avc{ \vuht^{k-1}}\right|^2}   \right)
 + \frac{\TS}{2}  \intOhk{ \vrht^{k-1}\circ \bfXkm \left| D_t^\ALE \avc{ \vuht^{k}}  \right|^2  } 
 \\ &  + \intSh{ \left(\delta_t \left(\frac{|\zht^{k} |^2}{2} \right)  + \frac{\TS}{2}|\delta_t \zht^{k} |^2\right)  } 
+ \intOhk{  \left(2\mu |\bfD(\vuht^k) |^2 + \lambda|\Divh \vuht^k |^2   \right)  } + 2 \mu \intEh{\frac1h\jump{\vuht^k}^2}
 \\&  + \intOk{\vrht^k \bfh^{k}\cdot \vuht^{k}} 
 + \intS{\gh^{k} \zht^{k}} + \intS{ \frac12  \delta_t \left( \alpha |\Lap  \etaht^{k}|^2 + \beta |\Gradh \etaht^{k}|^2 \right)}
+ \intS{ \left( \frac{\TS\alpha}2 \left|\Lap  \zht^{k}\right|^2 +\frac{\TS\beta}2 \left|\Gradh \zht^{k}\right|^2 \right) }.
\end{aligned}
\end{equation*}

{\bf Term $I_2+I_4$.} For the convective terms, we have using the fact that $\Pim{\vuht}$ and $\Divup \left(\vrht^k \avc{ \bfu}^{k}_h , \vvht^{k} \right)$ are constant on each $K\in \Tht$ and the upwind divergence
\begin{equation*}
\begin{aligned}
& I_2+I_4= \intOhk{ -\Divup \left( \vrht^{k}, \vvht^{k}  \right) \frac{\left|\avc{ \vuht^{k}}\right|^2}{2} }+  \intOhk{ \Divup \left(\vrht^k \avc{ \bfu}^{k}_h , \vvht^{k} \right) \cdot \vuht^{k} }
\\&= \sumintKs{ \left( \vrht^{k,up} \avc{ \vuht}^{k,up} \cdot \avc{ \vuht^{k}} - \vrht^{k,up} \frac12 \left|\avc{ \vuht^{k}}\right|^2 \right) \vvht^{k}\cdot \bfn }
\\& \quad - \hheps \sumintKs{ \left(  \jump{\vrht^{k} \avc{ \vuht^{k}} } \cdot \avc{ \vuht^{k}} - \jump{\vrht^{k}} \frac12\left|\avc{ \vuht^{k}}\right|^2 \right)    }
\\&= \intEhkKL{\frac12 \jump{\avc{ \vuht^{k}}}^2 \left( \vr_K^{k}[\vvht^{k}\cdot \bfn_{\sigma,K} ]^+ +\vr_L^{k}[\vvht^{k}\cdot \bfn_{\sigma,L} ]^+  \right)} 
+ \hheps \intEhk{ \Ov{\vrht^{k}} \jump{\avc{ \vuht^{k}}}^2  }
\\& =  \intEhk{ \left(  \frac12 \vrht^{k,up}   | \vvht^k \cdot \vc{n} |  + \hheps \Ov{\vrht^{k}} \right) \jump{ \avc{ \vuht^{k}} }^2 }.
\end{aligned}
\end{equation*}

{\bf Pressure term $I_{5} $. }
Recalling the discrete internal energy equation \eqref{r1}, we can rewrite the pressure term as 
\begin{equation*}
I_5= -\intOhk{  p(\vrht^{k}) \Divh \vuht^{k} } =
\frac{ 1}{\TS} \left( \intOhk{  \Hc(\vrht^k)}  - \intOhkm{ \Hc(\vrht^{k-1}) }  \right)
+ D_1 + D_2,
\end{equation*}
where $D_1$ and $D_2$ are given in~\eqref{D1D2}. 
Collecting all the above terms, we get 
\begin{equation*} 
\begin{aligned}
& \frac{1}{\TS} \left(  \intOhk{  E_f^{k} }   - \intOhkm{ E_f^{k-1} }  \right) +  \intSh{  \delta_t \left( \frac{|\zht^{k}|^2}{2}   + \alpha \frac{|\Lap  \etaht^k |^2}{2}  + \beta \frac{|\Gradh \etaht^k |^2}{2} \right)}
\\& \quad 
+ \frac{\TS}2  \intS{ \left( |\delta_t \zht^{k} |^2+ \alpha \left|\Lap  \zht^{k}\right|^2 + \beta \left| \Grad \zht^{k}\right|^2  \right) }
  + \intOhk{  \left(2\mu |\bfD(\vuht^k) |^2 + \lambda|\Divh \vuht^k |^2   \right)  } 
  + 2 \mu \intEh{\frac1h\jump{\vuht^k}^2}
\\&= 
-D_1 -D_2- \intOhk{\frac{\TS}{2} \vr^{k-1}_h \circ \bfXkm
\left| D_t \avc{ \vuht^{k}} \right|^2  }
+  \intOk{\vrht^k \bfh^{k}\cdot \vuht^{k}} + \intS{\gh^{k} \zht^{k}}
\\ & \quad
 - \intEhk{ \left(  \frac12 \vrht^{k,up}   | \vvht^k \cdot \vc{n} |  + \Ov{\vrht^{k}}\hheps  \right) \jump{ \avc{ \vuht^{k}} }^2 }.
\end{aligned}
\end{equation*}
We finish the proof by summing up the above equation for $k=1,\ldots,N$ and multiplying with $\TS$.  
\end{proof}

\subsection{Proof of Lemma~\ref{lemBs}: useful estimates}\label{Apue}
\begin{proof}
Item~1 has been reported by \cite[Lemma 3.5]{FLMS_NS}. 
Item~2 has been reported by   \cite[Lemma 4.3]{GallouetIMA}. 
Item~4 has been reported by ~\cite[Chaper 9, Lemma 7]{FeiKaPok}.
We are only left with the proof of Item~3. 
We start the proof with the a-priori estimates on $\vvht$ 
\[
\norm{\vvht}_{L^\infty(L^6)}\leq \norm{\vuht}_{L^\infty(L^6)}+\norm{\vwht}_{L^\infty(L^6)}\aleq \tau^{-\frac12},
\]
where we used \eqref{inv_est} for $\vuht$ and \eqref{eq:west} for $\vwht$.
On one hand, for $\gamma \geq 2$, we employ \eqref{lemB1} to get
\[
\begin{aligned}
& 
\int_0^T  \sumEKh{  \abs{ \jump{\vrht}  \avg{\vvht \cdot \vc{n} }^-} }  \dt \\& \aleq  
\left( \int_0^T  \intEh{ \frac{ \jump{\vrht}^2}{\max\{\vrht^{\rm in}, \vrht^{\rm out}\}} \abs{\avg{\vvht \cdot \vc{n} }}} \right)^{1/2} \left( \int_0^T  \intEh{ \max\{\vrht^{\rm in}, \vrht^{\rm out}\} \abs{\avg{\vvht \cdot \vc{n} }}  }\right)^{1/2} 
\\& \aleq 
 h^{-1/2} \left( \norm{\vrht}_{L^2L^2} \norm{\vvht}_{L^2L^2}\right)^{1/2} 
\aleq  h^{-1/2} .
\end{aligned}
\]
On the other hand, it is easy to check for $\gamma\in(1,2)$ that 
$\Hc''(r)=a r^{\gamma-2} \geq a$ if $r\leq 1$ and $r \Hc''(r)= ar^{\gamma-1} \geq a$ if $r\geq 1$. Therefore 
\[ \Hc''(r)(1+r) \geq a \mbox{ for all } r \in(0,\infty)
\]
Applying these inequalities together with H\"older's inequality, and the estimate \eqref{uniform_bounds_h} we derive (by choosing $\vrht^\dagger$ conveniently and \eqref{eq:invtrace}) that
\[
\begin{aligned}
& 
\int_0^T  \sumEKh{  \abs{ \jump{\vrht}  \avg{\vvht \cdot \vc{n} }^-} }  \dt 
\\& \leq \frac2{\sqrt{a}}
\int_0^T  \intEh{ \sqrt{ \Hc''(\vrht^\dagger)} \abs{ \jump{\vrht} } \sqrt{\abs{\avg{\vvht \cdot \vc{n} }}}  \sqrt{(1+\vrht^\dagger) \abs{\avg{\vvht \cdot \vc{n} }}}  }
\\& \leq  \frac2{\sqrt{a}} 
\left( \int_0^T  \intEh{ \Hc''(\vrht^\dagger)  \jump{\vrht}^2 \abs{\avg{\vvht \cdot \vc{n} }}} \right)^{1/2} 
\left( \int_0^T  \intEh{ \abs{\avg{\vvht \cdot \vc{n} }} + \abs{\vrht^\dagger \avg{\vvht \cdot \vc{n} }}    }\right)^{1/2} 
\\& \aleq 
 h^{-1/2} \left( \norm{\vvht}_{L^1L^1}^{1/2}  + \norm{\vrht}_{L^2L^{6/5}}\norm{\vvht}_{L^2L^6}  \right)^{1/2} 
\aleq h^{-\frac12} \TS^{-\frac14} \norm{\vrht}_{L^\infty L^{6/5}}^{1/2} 
=:I_1 .
\end{aligned}
\]
Then, for $\gamma \in[6/5, 2)$ we have $I_1 \aleq h^{-\frac12}\TS^{-\frac14}$. 
Concerning $\gamma \in (1,6/5)$ we deduce by inverse estimate~\eqref{inv_est} that 
$ I_1 \aleq h^{-\frac12}\TS^{-\frac14} h^{\frac32(\frac{5}{6}-\frac{1}{\gamma})}
 \norm{\vrht}_{L^\infty L^{\gamma}}^{1/2}   \aleq h^{\frac{5\gamma-6}{4\gamma}-\frac12}\TS^{-\frac14}$, 
which completes the proof of the first estimate \eqref{con_r}.

Similarly, we prove the second estimate \eqref{con_m} in two steps. First for $\gamma \geq 2$ we may derive it due to H\"older's inequality, trace theorem, and the inverse estimate~\eqref{inv_est} that 
\[
\begin{aligned}
& 
\int_0^T  \sumEKh{  \abs{ \jump{\vrht} {\Pim\vuht} \avg{\vvht \cdot \vc{n} }^-} }  \dt 
\\& \leq  
\left( \int_0^T  \intEh{ \frac{ \jump{\vrht}^2}{\max\{\vrht^{\rm in}, \vrht^{\rm out}\}} \abs{\avg{\vvht \cdot \vc{n} }}} \right)^{1/2} 
 \left( \int_0^T  \intEh{ \max\{\vrht^{\rm in}, \vrht^{\rm out}\} \left({\Pim\vuht}\right)^2\abs{ \avg{\vvht \cdot \vc{n} }}  }\right)^{1/2}
\\& \aleq 
 h^{-1/2} \left( \norm{\vrht}_{L^\infty L^{2}} \norm{\vuht}_{L^2L^6}^2 \norm{\vvht}_{L^\infty L^6} \right)^{1/2} 
\aleq  h^{-1/2} \TS^{-\frac14}.
\end{aligned}
\]

Next, we proceed to show the second estimates for $\gamma \in(1, 2)$. 
\[
\begin{aligned}
& 
\int_0^T  \sumEKh{  \abs{ \jump{\vrht}{\Pim\vuht}  \avg{\vvht \cdot \vc{n} }^-} }  \dt 
\\& \leq  
\left( \int_0^T  \intEh{  \jump{\vrht^{\gamma/2}}^2 \abs{\avg{\vvht \cdot \vc{n} }}} \right)^{1/2} \left( \int_0^T  \intEh{  \Ov{\vrht^{1-\gamma/2} }^2\abs{\avg{\vvht \cdot \vc{n} }} \left({\Pim\vuht}\right)^2  }\right)^{1/2} 
\\& \aleq 
h^{-1/2}\left( \int_0^T \norm{\vrht}_{L^{2(2-\gamma)}}^{2-\gamma} \norm{ \vvht}_{L^6} \norm{\vuht}_{L^6}^2 \dt   \right)^{1/2}
\aleq h^{-1/2}\norm{\vrht}_{L^\infty L^{2(2-\gamma)}}^{(2-\gamma)/2} \norm{ \vvht}_{L^\infty L^6}^{1/2} \norm{\vuht}_{L^2L^6}
\\ &\aleq
h^{-1/2}\TS^{-\frac14} \norm{\vrht}_{L^\infty L^{2(2-\gamma)}}^{(2-\gamma)/2}
 =: I_2,
\end{aligned}
\]
where we have used the algebraic inequality for $\gamma \in (1,2)$ that
$ \jump{\vrht}^2 \leq \jump{(\vrht^{\gamma/2})}^2 \left(\Ov{\vrh^{1-\gamma/2}}\right)^2$.  
If $\frac43 \leq \gamma$ it follows (as before) that  
$ I_2  \aleq    h^{-1/2} \TS^{-\frac14}$. 
On the other hand, if $1<\gamma< \frac{4}{3}$  we complete the proof by the inverse estimates~\eqref{inv_est} and find
$
I_2  \leq  
h^{-1/2}\TS^{-\frac14}\norm{\vrht}_{L^\infty(L^{2(2-\gamma)})}^\frac{2-\gamma}2
\leq h^{-1/2}\TS^{-\frac14}\big(h^{\frac{3}{2(2-\gamma)}-\frac{3}{\gamma}}\norm{\vrht}_{L^\infty(L^{\gamma})}\big)^\frac{2-\gamma}2
= h^{-1/2}\TS^{-\frac14} h^\frac{9\gamma-12}{4\gamma}
$
which finishes the estimate.
\end{proof}


\bibliographystyle{siamplain}

\end{document}